\newtheorem{theorem}{Theorem}[section]
\newtheorem{lemma}[theorem]{Lemma}
\newtheorem{corollary}[theorem]{Corollary}
\newtheorem{proposition}[theorem]{Proposition}
\newtheorem{definition}[theorem]{Definition}
\newtheorem{remark}[theorem]{Remark}
\newcommand\on[1]{\operatorname{#1}}
\newcommand\mc[1]{\mathcal{#1}}
\newcommand\ps[1]{\underline{#1}}
\newcommand\ld{\lambda}
\newcommand{\id}{\on{id}}
\newcommand{\sa}{\on{sa}}
\newcommand{\op}{\on{op}}              
\newcommand{\hA}{\hat{A}}
\newcommand\hB{\hat{B}}
\newcommand{\hP}{\hat{P}}
\newcommand\hQ{\hat{Q}}
\newcommand{\cH}{\mathcal{H}}
\newcommand\BH{\mc{B}(\cH)}
\newcommand\eq[1]{(\ref{#1})}
\newcommand{\Sig}{\ps{\Sigma}}            
\newcommand\Set{\mathbf{Set}}                    
\newcommand\cN{\mc{N}}
\newcommand\cM{\mc{M}}
\newcommand\cV{\mc{V}}
\newcommand\VN{\mc{V}(\cN)}
\newcommand\VM{\mc{V}(\cM)}
\newcommand\SetC[1]{\Set^{#1^{\op}}}
\newcommand\SetVNop{\SetC{\VN}}
\newcommand\SetVMop{\SetC{\VM}}
\newcommand\SetVN{\Set^{\VN}}
\newcommand\SetVM{\Set^{\VM}}
\newcommand\SetCAop{\SetC{\CA}}
\newcommand\SetCA{\Set^{\CA}}
\newcommand\SetCB{\Set^{\CB}}
\newcommand\CA{\mc{C}(\cA)}
\newcommand\bbC{\mathbb{C}}
\newcommand\bbR{\mathbb{R}}
\newcommand\PN{\mc{P}(\cN)}
\newcommand\PM{\mc{P}(\cM)}
\newcommand\vNa{\mathbf{vNa}}
\newcommand\cA{\mc{A}}
\newcommand\cB{\mc{B}}
\newcommand\Ob[1]{\on{Ob}(#1)}
\newcommand\ra{\rightarrow}
\newcommand\mt{\mapsto}
\newcommand\lra{\longrightarrow}
\newcommand\lmt{\longmapsto}
\newcommand\hra{\hookrightarrow}
\newcommand\bjoin{\bigvee}
\newcommand\bmeet{\bigwedge}
\newcommand\tphi{\tilde\phi}
\newcommand\CB{\mc{C(B)}}
\newcommand\SetCBop{\SetC{\CB}}
\newcommand\SigA{\Sig^\cA}
\newcommand\SigB{\Sig^\cB}
\newcommand\SigN{\Sig^{\cN}}
\newcommand\SigM{\Sig^{\cM}}
\newcommand\cG{\mc G}
\newcommand\Id{\on{Id}}
\newcommand\Aut{\on{Aut}}
\newcommand\ol[1]{\overline{#1}}
\newcommand\io{\iota}
\newcommand\cJ{\mc J}
\newcommand\cC{\mc{C}}
\newcommand\ga{\gamma}
\newcommand\Ga{\Gamma}
\newcommand\pair[2]{\langle #1,#2\rangle}
\newcommand\fA{\ol{\cA}}
\newcommand\fB{\ol\cB}
\newcommand\fM{\ol\cM}
\newcommand\fN{\ol\cN}
\newcommand\cD{\mc D}
\newcommand\Pre[1]{\mathbf{Presh}(#1)}
\newcommand\Cpr[1]{\mathbf{Copresh}(#1)}
\newcommand\ucC{\mathbf{ucC^*}}
\newcommand\KHaus{\mathbf{KHaus}}
\newcommand\uC{\mathbf{uC^*}}
\newcommand\Dcpo{\mathbf{Dcpo}}
\newcommand\Arr[1]{\on{Arr}(#1)}
\newcommand\tT{\tilde T}
\newcommand\bbN{\mathbb{N}}
\newcommand\uCpart{\mathbf{uC^*part}}
\newcommand\VNpart{\mathbf{VNpart}}
\newcommand\Pos{\mathbf{Pos}}
\newcommand\cOML{\mathbf{cOML}}
\newcommand\JBW{\mathbf{JBW}}
\newcommand\JB{\mathbf{JB}}
\begin{document}

\title[Generalised Gelfand Spectra]{Generalised Gelfand Spectra of\\Nonabelian Unital $C^*$-Algebras}

\author{Andreas D\"oring}

\address{Andreas D\"oring, Clarendon Laboratory, Department of Physics, University of Oxford, Parks Road, OX1 3PU, Oxford, UK}
\email{doering@atm.ox.ac.uk}

\begin{abstract}
To each unital $C^*$-algebra $\cA$ we associate a presheaf $\SigA$, called the \emph{spectral presheaf of $\cA$}, which can be regarded as a generalised Gelfand spectrum. We develop a categorical notion of local duality and show that there is a contravariant functor from the category of unital $C^*$-algebras to a suitable category of presheaves containing the spectral presheaves. We clarify how much algebraic information about a $C^*$-algebra is contained in its spectral presheaf. A nonabelian unital $C^*$-algebra $\cA$ that is neither isomorphic to $\bbC^2$ nor to $\mc B(\bbC^2)$ is determined by its spectral presheaf up to quasi-Jordan isomorphisms. For a particular class of unital $C^*$-algebras, including all von Neumann algebras with no type $I_2$ summand, the spectral presheaf determines the Jordan structure up to isomorphisms.
\end{abstract}

\maketitle

\section{Introduction}
The famous Gelfand-Naimark theorem  \cite{GelNai43} states -- in modern language -- that there is a duality between the category of commutative $C^*$-algebras and $*$-homomorphisms and the category of locally compact Hausdorff spaces and proper continuous maps. The subcategory $\ucC$ of unital commutative $C^*$-algebras and unital $*$-homomorphisms is dual to the category $\KHaus$ of compact Hausdorff spaces and continuous maps. Given a unital commutative $C^*$-algebra $\cA$, the Gelfand spectrum $\Sigma(\cA)$ is the set of characters of $\cA$ (i.e., algebra homomorphisms $\ld:\cA\ra\bbC$), equipped with the topology of pointwise convergence. If $\phi:\cA\ra\cB$ is a unital $*$-homomorphism, then $\Sigma(\phi):\Sigma(\cB)\ra\Sigma(\cA)$ acts by precomposition (or `pullback'): each $\ld\in\Sigma(\cB)$ is mapped to $\ld\circ\phi\in\Sigma(\cA)$. Conversely, if $X$ is a compact Hausdorff space, then $C(X)$, the set of continuous, complex-valued functions on $X$, equipped with the supremum norm, is a commutative $C^*$-algebra under the pointwise algebraic operations. If $f:X\ra Y$ is a continuous function between compact Hausdorff spaces, then $C(f):C(Y)\ra C(X)$ also acts by precomposition (pullback): each $g\in C(Y)$ is mapped to $g\circ f\in C(X)$. The duality can be displayed as
\begin{equation}
			\xymatrix{\ucC \ar@<1ex>^-{\Sigma}[rr] \ar@{}|-{\bot}[rr] && \KHaus^{\op}. \ar@<1ex>^-{C(-)}[ll]}
\end{equation}
Gelfand-Naimark duality provides an enormously useful bridge between algebra on the one side and topology and geometry on the other. Algebraic notions have topological counterparts and vice versa, the simplest example being the correspondence between maximal ideals in a commutative $C^*$-algebra $\cA$ and points of its Gelfand spectrum $\Sigma(\cA)$. 

Of course, historically the question quickly arose if and how Gelfand-Naimark duality can be generalised to nonabelian $C^*$-algebras. There are a number of different approaches: instead of the space of characters as in the commutative case, one can consider the space of pure states of a $C^*$-algebra and regard it as a (generalised) spectrum on which one can try to define a functional representation of the algebra. This approach was pioneered by Kadison \cite{Kad51b,Kad52} and further developed by Akemann \cite{Ake71}, Akemann and Shultz \cite{AkeShu85}, Fujimoto \cite{Fuj98} and others. Another approach is based on sectional representations in $C^*$-bundles over the primitive ideal space, started by Fell \cite{Fel61} and leading to the Dauns-Hofmann theorem \cite{DauHof68}. Takesaki developed an approach to noncommutative Gelfand duality based on representation spaces and operator fields \cite{Tak67}, see also the related work by Kruszy\'nski and Woronowicz \cite{KruWor82}.

Noncommutative geometry is partly inspired by Gelfand-Naimark duality: the fact that topological notions have algebraic counterparts leads naturally to the idea of `translating' topology into commutative algebra first, and then to generalise to noncommutative algebras, which at least in spirit correspond to (algebras of functions on) noncommutative spaces. One can then consider further geometric structures, e.g. differentiable structure, suitably phrased in terms of noncommutative algebra. This idea has led to a rich body of deep and beautiful work, see e.g. Connes, Marcolli et al. from a differential geometric perspective \cite{Con94,ConMar08}; Manin, Majid et al. from deformation and quantum groups \cite{Man88,Maj95}; Rosenberg, Kontsevich et al. from algebraic geometry \cite{KonRos00}; also Hrushovski, Zilber et al. from geometric model theory \cite{HruZil96}.

In this article, we will start building towards a duality result for nonabelian $C^*$-algebras, but here we will focus on a somewhat more modest task: we will show how to define a new kind of spectrum of nonabelian unital $C^*$-algebras and von Neumann algebras that arises as a straightforward generalisation of the Gelfand spectrum of an abelian algebra. This generalised Gelfand spectrum will be a presheaf, called the \emph{spectral presheaf}. Instead of a single compact Hausdorff space as in classical Gelfand-Naimark duality, we have a presheaf of compact Hausdorff spaces. The spectral presheaf of a unital $C^*$-algebra $\cA$ is denoted $\SigA$. This object was first defined in the \emph{topos approach to quantum theory}, which was initiated by Isham and Butterfield \cite{IshBut98,IHB00} and substantially developed mainly by Isham and the author \cite{DoeIsh08a,DoeIsh08b,DoeIsh08c,DoeIsh08d,Doe09a,Doe09b,DoeIsh11,Doe11a,Doe11b,DoeIsh12,DoeBar12,Doe12} and by Heunen, Landsman, Spitters and Wolters \cite{HLS09a,HLS09b,HLS11,Wol10}.

The spectral presheaf is interpreted physically as a generalised state space of a quantum system. It was shown in \cite{IshBut98,IHB00} that the spectral presheaf of the algebra $\BH$ has no global sections if $\dim H\geq 3$, which is equivalent to the Kochen-Specker theorem \cite{KocSpe67}, an important theorem in the foundations of quantum theory. Later, this was generalised to arbitrary von Neumann algebras with no type $I_2$ summand \cite{Doe05}. In the present article, physical considerations will play no role. Mathematically, global sections of a presheaf are the analogues of points, so the spectral presheaf of a von Neumann algebra has no points, which can be seen as expressing its noncommutative character.

We will define the spectral presheaf of a unital $C^*$-algebra in section \ref{Sec_SpecPresheaves} and will show in section \ref{Sec_MorsAndMaps} that every unital $*$-homomorphism between unital $C^*$-algebras gives rise to a morphism between their presheaves in the opposite direction (Prop. \ref{Prop_AlgMorGivesSpecPreshMor}). The restriction to unital algebras is technically convenient, but does not seem essential. In future work, we will treat the non-unital case also.

We give a clear categorical underpinning to this construction in section \ref{Sec_PreshCopreshLocDuality} and introduce a notion of `local duality', which also connects the present article to the work by Heunen, Landsman, Spitters and Wolters. In particular, we show that there is a contravariant functor from the category of unital $C^*$-algebras and unital $*$-homomorphisms to a suitable category of presheaves in which the spectral presheaves of the algebras lie (Thm. \ref{Thm_TwoFunctors}). This presheaf category extends to a category of topoi. The construction presented here generalises the contravariant Gelfand-Naimark correspondence between unital abelian $C^*$-algebras and compact Hausdorff spaces to nonabelian $C^*$-algebras and suitable generalised spaces (although we will defer the discussion of topologies on the spectral presheaf and of continuity to future work). Analogous results hold for von Neumann algebras. 

In section \ref{Sec_AutomsOfSpecPresh}, we focus on isomorphisms between spectral presheaves and determine how much algebraic information about a nonabelian $C^*$-algebra is contained in its spectral presheaf. For a von Neumann algebra $\cN$ without type $I_2$ summand, we show that the spectral presheaf $\SigN$ determines exactly the Jordan $*$-structure of $\cN$. This is one of the main results (Thm. \ref{Thm_VNAs}). Using recent work by Hamhalter, we show that for a unital $C^*$-algebra $\cA$, the spectral presheaf determines $\cA$ up to quasi-Jordan $*$-isomorphism (Thm. \ref{Thm_SpecPresheafDeterminesPartialUnitalCStarAlg}) and, for a large class of $C^*$-algebras, also up to Jordan $*$-isomorphim (Thm. \ref{Thm_SpecPresheafOftenDeterminesUnitalCStarAlg}). 

In the article \cite{Doe12c}, we consider an application of the results in the present article in physics and develop further mathematical aspects. The spectral presheaf is considered as a state space for a quantum system, and flows on the spectral presheaf and on associated structures are defined. These flows allow describing the Schr\"odinger picture and the Heisenberg picture of time evolution of the quantum system in a new, more geometric manner than in standard quantum theory.

This work connects aspects of the theory of $C^*$-algebras and von Neumann algebras with aspects of category and topos theory. Standard references on operator algebras are e.g. \cite{KadRin83/86,Bla06}, and \cite{McLMoe92,Joh02/03} on topos theory.

\section{Unital $C^*$-algebras and their spectral presheaves}			\label{Sec_SpecPresheaves}
Let $\uC$ be the category of unital $C^*$-algebras with unital $*$-homo- morphisms as arrows. The category $\ucC$ of unital abelian $C^*$-subalg- ebras with unital $*$-homomorphisms as arrows is a full and faithful subcategory of $\uC$.

\begin{definition}			\label{Def_ContextCat}
Let $\cA\in\Ob\uC$ be a unital $C^*$-algebra, and let $\CA$ be the set of unital abelian $C^*$-subalgebras of $\cA$ that share the unit element $1$ with $\cA$. Equipped with the partial order given by inclusion, $\CA$ is called the \emph{context category of $\cA$}.
\end{definition}

By convention, we include the trivial subalgebra $C_0:=\bbC 1$ in $\CA$.\footnote{In some previous articles, e.g. in \cite{Doe12}, $C_0$ was excluded from $\CA$. For our purposes here, it makes sense to include it.}

\begin{remark}
Physically, the contexts, i.e., the elements of $C\in\CA$, are interpreted as `classical perspectives' on the quantum system. Each context $C$ determines and is determined by a set of commuting self-adjoint operators, which physically correspond to co-measurable physical quantities. The trivial abelian $C^*$-algebra $\bbC\hat1$ represents the trivial classical perspective. We will not be concerned with physical interpretation in the following. In the article \cite{Doe12c}, we will treat time evolution of quantum systems.
\end{remark}

The poset $\CA$ has all non-empty meets (greatest lower bounds). For any non-empty family $(C_i)_{i\in I}\subseteq\CA$,
\begin{equation}
			\bmeet_{i\in I} C_i:=\bigcap_{i\in I}C_i.
\end{equation}
Moreover, $\CA$ has all directed joins, hence it is a directed complete partial order (dcpo). For any directed family $(C_i)_{i\in I}\subseteq\CA$, the directed join is given by the (abelian) $C^*$-algebra generated by the algebras $C_i$. It was shown in \cite{DoeBar12}, Prop. 5.25 that there is a functor
\begin{align}
			\cC:\uC &\lra \Dcpo\\			\nonumber
			\cA &\lmt \CA
\end{align}
from the category $\uC$ of unital $C^*$-algebras and unital $*$-homomorphisms to the category $\Dcpo$ of dcpos and Scott-continuous functions.

If $\cA$ is abelian, then $\cA$ is the top element of $\CA$. In this case, the empty meet exists in $\CA$ and is equal to $\cA$, and all joins exist in $\CA$, so $\CA$ is a complete lattice. If $\cA$ is nonabelian, then the maximal abelian $C^*$-subalgebras are the maximal elements in the poset $\CA$, but no top element exists, which is equivalent to saying that the empty meet does not exist in $\CA$.

We now introduce the spectral presheaf of a unital $C^*$-algebra, which is a generalisation of the Gelfand spectrum of a unital abelian $C^*$-algebra.

\begin{definition}			\label{Def_SpecPresh}
Let $\cA$ be a unital $C^*$-algebra $\cA$. The \emph{spectral presheaf $\Sig$ of $\cA$} is the presheaf over $\CA$ given
\begin{itemize}
	\item [(a)] on objects: for all $C\in\CA$, $\Sig_C:=\Sigma(C)$, the Gelfand spectrum of $C$, i.e., the set of multiplicative states (characters, algebra homomorphisms) $\ld:C\ra\bbC$, equipped with the Gelfand topology (that is, the relative weak$^*$-topology when $\Sigma(C)$ is seen as a subset of the dual $C^*$ of $C$),
	\item [(b)] on arrows: for all inclusions $i_{C'C}:C'\hookrightarrow C$,
	\begin{align}
				\Sig(i_{C'C}):\Sig_C &\lra \Sig_{C'}\\			\nonumber
				\ld &\lmt \ld|_{C'}
	\end{align}
	the canonical restriction map. This map is surjective and continuous with respect to the Gelfand topologies.
\end{itemize}
\end{definition}

As is well-known, the Gelfand spectrum $\Sigma(C)$ of a unital $C^*$-algebra $\cA$ is a compact Hausdorff space. The spectral presheaf $\Sig$ of a---generally nonabelian---$C^*$-algebra $\cA$ consists of the Gelfand spectra of the abelian $C^*$-subalgebras of $\cA$, `glued together' in the canonical manner. If $\cA$ is abelian, then the component $\Sig_{\cA}$ at the top element of $\CA$ is the Gelfand spectrum of $\cA$.

The spectral presheaf was first defined by Isham and Butterfield (\cite{IshBut98}, and in \cite{IHB00} for von Neumann algebras) and used extensively in the so-called \emph{topos approach} to quantum theory (see \cite{DoeIsh11}). Being a presheaf, $\Sig$ is an object in the topos $\SetCAop$ of presheaves over the context category $\CA$. 

\section{Algebra morphisms, geometric morphisms and maps between spectral presheaves}			\label{Sec_MorsAndMaps}
We present the basic construction showing that every unital $*$-homo- morphism $\phi:\cA\ra\cB$ between unital $C^*$-algebras induces a map $\pair{\Phi}{\cG_\phi}:\SigB\ra\SigA$ in the opposite direction between the spectral presheaves of the algebras. 

Let $\cA,\,\cB\in\Ob\uC$ be unital $C^*$-algebras, and let $\phi\in\uC(\cA,\cB)$, that is, a unital $*$-homomorphism from $\cA$ to $\cB$. The algebra morphism $\phi$ induces a map
\begin{align}
			\tphi:\CA &\lra \CB\\			\nonumber
			C &\lmt \phi|_C(C)	
\end{align}
between the posets of unital abelian $C^*$-subalgebras of $\cA$ and $\cB$, respectively. For $C\in\CA$, the image $\phi|_C(C)$ is norm-closed (and hence a $C^*$-algebra) since $\phi$ is a $*$-homomorphism. The map $\tphi$ preserves the partial order and hence is well-defined.

\begin{remark}
The poset $\CA$ can be equipped with different topologies. One of them is the lower Alexandroff topology for which all lower sets in $\CA$ are open sets. Let $\phi:\cA\ra\cB$ be a unital $*$-homomorphism between unital $C^*$-algebras, and let $\tphi:\CA\ra\CB$ be the induced map between the context categories. The fact that $\tphi$ is order-preserving (monotone) directly implies that $\tphi$ is continuous with respect to the Alexandroff topologies on $\CA$ and $\CB$. Moreover, it is well-known that the topos of presheaves over $\CA$ is isomorphic to the topos of sheaves over $\CA_{Alex}$, the poset $\CA$ equipped with the Alexandroff topology,
\begin{equation}
			\SetCAop\simeq\on{Sh}(\CA_{Alex}).
\end{equation}
Another natural topology to consider is the Scott topology. As mentioned above, the morphism $\tphi:\CA\ra\CB$ is continuous with respect to the Scott topologies on $\CA$ and $\CB$. In the following, we will not be concerned with topologies on $\CA$, hence we will discuss presheaves and not sheaves.
\end{remark}

$\CA$ is the base category of the presheaf topos $\SetCAop$, $\CB$ is the base category of $\SetCBop$, and $\tphi:\CA\ra\CB$ is a morphism (i.e., functor) between these base categories. As is well-known, such a morphism between the base categories induces an essential geometric morphism
\begin{equation}
			\Phi:\SetCAop \lra \SetCBop
\end{equation}
between the topoi. (For some background on geometric morphisms, see e.g. \cite{McLMoe92,Joh02/03}; for essential geometric morphisms, see in particular section A4.1 in \cite{Joh02/03}.) 

\begin{remark}			\label{Rem_EssGeomMorFromFunctors}
$\SetCAop$ and $\SetCBop$ are presheaf categories over small base categories, and $\CB$ is Cauchy-complete, so every essential geometric morphism
\begin{align}
	f:\SetCAop\ra \SetCBop
\end{align}
arises from a functor $\tilde f:\CA\ra\CB$ by \cite{Joh02/03}, Lemma 4.1.5.\footnote{The lemma is formulated for functor categories, but the switch to presheaf categories is trivial.} Since conversely, every functor $\tilde f:\CA\ra\CB$ induces an essential geometric morphism $f:\SetCAop\ra\SetCBop$, there is a bijective correspondence between functors between the base categories and essential geometric morphisms between the presheaf topoi. This fact will be used throughout.
\end{remark}

The geometric morphism $\Phi$ has a direct image part
\begin{equation}
			\Phi_*:\SetCAop \lra \SetCBop
\end{equation}
in covariant direction, and an inverse image part
\begin{align}			\label{Eq_ActionOfInvImageFunctor}
			\Phi^*:\SetCBop &\lra \SetCAop\\			\nonumber
			\ps P &\lmt \ps P\circ\tphi
\end{align}
in contravariant direction. We display how the inverse image functor $\Phi^*$ acts on a presheaf here, since we will need this in the following (while the action of the direct image functor $\Phi_*$ will not play a role). $\Phi^*$ is left adjoint to $\Phi_*$, and $\Phi^*$ preserves finite limits. Since $\Phi$ is an essential geometric morphism, $\Phi^*$ also has a left adjoint $\Phi_!$ in covariant direction, that is, $\Phi_!:\SetCAop\ra\SetCBop$. This implies that the inverse image functor $\Phi^*$ preserves all limits as well as colimits.

Let $\SigB$ be the spectral presheaf of $\cB$. This is an object in $\SetCBop$. We use the inverse image functor $\Phi^*$ to map $\SigB$ to an object $\Phi^*(\SigB)$ in $\SetCAop$: by \eq{Eq_ActionOfInvImageFunctor}, we have
\begin{equation}
			\forall C\in\CA: \Phi^*(\SigB)_C = \SigB_{\tphi(C)}.
\end{equation}
The restriction maps of the presheaf $\Phi^*(\SigB)$ are given as follows: if $C',C\in\CA$ such that there is an inclusion $i_{C'C}:C'\hookrightarrow C$, then
\begin{align}
			\Phi^*(\SigB)(i_{C'C}):\Phi^*(\SigB)_C &\lra \Phi^*(\SigB)_{C'}\\			\nonumber
			\ld &\lmt \ld|_{\phi(C')}.
\end{align}

For each $C\in\CA$, we have a morphism
\begin{equation}
			\phi|_C:C \lra \phi(C)
\end{equation}
of unital abelian $C^*$-algebras. By Gelfand duality, this induces a continuous map
\begin{align}
			\cG_{\phi;C}:\Sigma(\phi(C)) &\lra \Sigma(C)\\			\nonumber
			\ld &\lmt \ld\circ\phi|_C
\end{align}
in the opposite direction between the Gelfand spectra. Noting that $\Phi^*(\SigB)_C=\SigB_{\tphi(C)}=\Sigma(\phi(C))$ and $\SigA_C=\Sigma(C)$, we have a map
\begin{equation}
			\cG_{\phi;C}:\Phi^*(\SigB)_C \lra \SigA_C,
\end{equation}
for each $C\in\CA$. 

Let $C,C'\in\CA$ such that $C'\subset C$, and let $\ld\in\Phi^*(\SigB)_C$. Then
\begin{align}
			\cG_{\phi;C'}(\Phi^*(\SigB)(i_{C'C})(\ld)) &= \cG_{\phi;C'}(\ld|_{\phi(C')})\\
			&= \ld|_{\phi(C')}\circ\phi|_{C'}\\
			&= (\ld\circ\phi|_C)|_{C'}\\
			&= \SigA(i_{C'C})(\ld\circ\phi|_C)\\
			&= \SigA(i_{C'C})(\cG_{\phi;C}(\ld)),
\end{align}
so
\begin{equation}
			\cG_{\phi;C'}\circ\Phi^*(\SigB)(i_{C'C}) = \SigA(i_{C'C})\circ\cG_{\phi;C}
\end{equation}
and the following diagram commutes for all $C',C\in\CA$ such that $C'\subset C$:
\[
			\xymatrix{
			\Phi^*(\SigB)_C  \ar[rr]^{\cG_{\phi;C}} \ar[dd]_{\Phi^*(\SigB)(i_{C'C})} & & \SigA_C \ar[dd]^{\SigA(i_{C'C})}
			\\ & & \\ 
			\Phi^*(\SigB)_{C'} \ar[rr]^{\cG_{\phi;C'}} & & \SigA_{C'} 
			}
\]  
This means that the maps $\cG_{\phi;C}$, $C\in\CA$, are the components of a natural transformation
\begin{equation}
			\cG_{\phi}:\Phi^*(\SigB) \lra \SigA.
\end{equation}
Thus, $\cG_{\phi}$ is an arrow in the topos $\SetCAop$, mapping the inverse image $\Phi^*(\SigB)$ of $\SigB$, the spectral presheaf of $\cB$, into $\SigA$, the spectral presheaf of $\cA$.

In a two-step process, we have mapped the spectral presheaf $\SigB$ of $\cB$ into the spectral presheaf $\SigA$ of $\cA$,
\begin{equation}
			\SigB \overset{\Phi^*}{\lmt} \Phi^*(\SigB) \overset{\cG_{\phi}}{\lmt} \SigA.
\end{equation}
Note that the map $\cG_{\phi}\circ\Phi^*:\SigB\ra\SigA$ is in contravariant direction with respect to the algebra morphism $\phi:\cA\ra\cB$ that we started from. 

The `composite' $\cG_{\phi}\circ\Phi^*$ consists of the inverse image part of a geometric morphism between topoi, followed by an arrow in a topos. Hence, it is not a proper composite arrow in any category. But maps like $\cG_{\phi}\circ\Phi^*$---which, more conventionally, are also denoted $\pair{\Phi}{\cG_\phi}$---are well-known in the theory of ringed topoi, where an arrow $\pair{\Ga}{\io}$ is a geometric morphism $\Ga:\mc X\ra\mc Y$ between the topoi, together with a specified map (morphism of internal rings) $\io:\Ga^*R_{\mc Y}\ra R_{\mc X}$ from the inverse image of the ring object $R_{\mc Y}$ in the ringed topos $\mc Y$ to the ring object $R_{\mc X}$ in the ringed topos $\mc X$, see e.g. \cite{nLab}.

Summing up, we have shown:
\begin{proposition}			\label{Prop_AlgMorGivesSpecPreshMor}
Let $\cA,\,\cB$ be unital $C^*$-algebras, and let $\phi:\cA\ra\cB$ be a unital $*$-homomorphism. There is a canonical map
\begin{equation}
			\pair{\Phi}{\cG_\phi}=\cG_{\phi}\circ\Phi^*:\SigB \lra \SigA
\end{equation}
in the opposite direction between the associated spectral presheaves.
\end{proposition}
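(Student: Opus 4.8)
The plan is to reconstruct the map $\SigB \to \SigA$ in the two stages indicated in the discussion above: a geometric-morphism stage coming from functoriality of the context-category assignment $\cA \mapsto \CA$, followed by a Gelfand-duality stage carried out context by context.

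\emph{Stage 1 (the geometric morphism).} From $\phi$ I define $\tphi : \CA \to \CB$ by $\tphi(C) := \phi|_C(C)$, exactly as in the text. The first thing to check is that $\tphi$ is well-defined, i.e. that $\phi|_C(C) \in \CB$: it is a $*$-subalgebra of $\cB$; it is abelian since $C$ is; it contains $1_\cB$ because $1_\cA \in C$ and $\phi(1_\cA) = 1_\cB$; and it is norm-closed because a $*$-homomorphism of $C^*$-algebras has closed range (its image is isometrically $*$-isomorphic to $C/\ker(\phi|_C)$, which is complete). Monotonicity is immediate, since $C' \subseteq C$ gives $\phi(C') \subseteq \phi(C)$, so $\tphi$ is a functor between the poset categories. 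By the standard correspondence recalled in Remark \ref{Rem_EssGeomMorFromFunctors}, $\tphi$ induces an essential geometric morphism $\Phi : \SetCAop \to \SetCBop$ whose inverse image acts by $\Phi^*(\ps P) = \ps P \circ \tphi$. Applying $\Phi^*$ to $\SigB$ yields an object of $\SetCAop$ with $\Phi^*(\SigB)_C = \SigB_{\tphi(C)} = \Sigma(\phi(C))$, and with restriction map along an inclusion $i_{C'C} : C' \hra C$ induced by $\phi(C') \hra \phi(C)$, namely $\ld \mapsto \ld|_{\phi(C')}$.

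\emph{Stage 2 (the natural transformation).} For each $C \in \CA$, the unital $*$-homomorphism $\phi|_C : C \to \phi(C)$ of unital abelian $C^*$-algebras is sent by the contravariant Gelfand functor to the continuous map
\[
  \cG_{\phi;C} := \Sigma(\phi|_C) : \Sigma(\phi(C)) \lra \Sigma(C), \qquad \ld \lmt \ld\circ\phi|_C .
\]
Since $\Sigma(\phi(C)) = \Phi^*(\SigB)_C$ and $\Sigma(C) = \SigA_C$, this is a map $\Phi^*(\SigB)_C \to \SigA_C$, one for each object of $\CA$. The remaining point is naturality: for $C' \subseteq C$ the square whose vertical arrows are the restriction maps of $\Phi^*(\SigB)$ and of $\SigA$ and whose horizontal arrows are $\cG_{\phi;C}$ and $\cG_{\phi;C'}$ must commute. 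Chasing $\ld \in \Phi^*(\SigB)_C$ down-then-across gives $(\ld|_{\phi(C')})\circ\phi|_{C'}$, while across-then-down gives $(\ld\circ\phi|_C)|_{C'}$; both equal the character $x \mapsto \ld(\phi(x))$ of $C'$, since $\phi|_{C'}$ is $\phi|_C$ corestricted and $\phi(C') \subseteq \phi(C)$. Hence the $\cG_{\phi;C}$ are the components of a natural transformation $\cG_\phi : \Phi^*(\SigB) \to \SigA$ in the topos $\SetCAop$.

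\emph{Conclusion.} Composing the two stages, $\pair{\Phi}{\cG_\phi} = \cG_\phi \circ \Phi^* : \SigB \to \SigA$ is the asserted map, and it is canonical in the sense that every ingredient is determined by $\phi$ alone; this is what makes $\phi \mapsto \pair{\Phi}{\cG_\phi}$ a candidate for the contravariant functor of the later Theorem \ref{Thm_TwoFunctors}. The only step that is not purely formal is the well-definedness of $\tphi$, which rests on the closed-range property of $*$-homomorphisms between $C^*$-algebras; I expect this to be the single place where an honest operator-algebraic fact (as opposed to topos bookkeeping and functoriality of Gelfand duality) enters, and I would simply cite it as standard.
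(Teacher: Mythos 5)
Your proposal is correct and follows essentially the same two-stage construction as the paper: the induced order-preserving map $\tphi:\CA\ra\CB$ (with the closed-range fact guaranteeing $\phi|_C(C)\in\CB$) giving the essential geometric morphism $\Phi$, followed by the componentwise Gelfand-dual maps $\cG_{\phi;C}=\Sigma(\phi|_C)$ and the same naturality check. Nothing is missing; your extra remarks on well-definedness of $\tphi$ simply spell out what the paper states in one line.
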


\section{Presheaves, copresheaves and local duality}			\label{Sec_PreshCopreshLocDuality}
In this section, we develop some categorical background to the construction presented in the previous section. In particular, we introduce categories of presheaves and copresheaves with values in a fixed category and define a notion of local duality.

This section also relates the present work to work by Heunen, Landsman, Spitters and Wolters \cite{HLS09a,HLS09b,HLS11,Wol10}. There is a more refined categorical description using the fact that copresheaves are fibered over presheaves \cite{Fun12}. This will be developed in future work with Jonathon Funk, Pedro Resende and Rui Soares Barbosa.

\subsection{Bohrification and partial $C^*$-algebras}
The following construction is due to Heunen, Landsman and Spitters \cite{HLS09a}:
\begin{definition}			\label{Def_Bohrification}
Let $\cA\in\Ob{\uC}$ be a unital $C^*$-algebra, and let $\CA$ be its context category. The \emph{Bohrification of $\cA$} is the tautological copresheaf $\fA$ over $\CA$ that is given
\begin{itemize}
	\item [(a)] on objects: $\forall C\in\CA:\fA_C:=C$,
	\item [(b)] on arrows: for all inclusions $i_{C'C}:C'\hookrightarrow C$,
	\begin{align}
				\fA(i_{C'C}):\fA_{C'}=C' &\lra \fA_C=C\\			\nonumber
				\hA &\lmt \hA,
	\end{align}
	that is, $\fA(i_{C'C})=i_{C'C}$.
\end{itemize}
\end{definition}
The copresheaf $\fA$ is an object in the topos $\SetCA$ of covariant functors from $\CA$ to $\Set$. Using results by Banaschewski and Mulvey \cite{BanMul97,BanMul00,BanMul00b,BanMul06}, one can show that $\fA$ is an \emph{abelian} $C^*$-algebra internally in the topos $\SetCA$; for details, see \cite{HLS09a,HLS09b,HLS11}. 

We briefly consider von Neumann algebras, since we will need the notions of context category, spectral presheaf, and Bohrification of a von Neumann algebra later on.

\begin{definition}			\label{Def_VNSigNfN}
Let $\cN$ be a von Neumann algebra, and let $\VN$ denote the set of abelian von Neumann subalgebras of $\cN$ that share the unit element with $\cN$. Equipped with inclusion as partial order, $\VN$ is called the \emph{context category of $\cN$.} The \emph{spectral presheaf $\SigN$ associated with $\cN$} is the presheaf over $\VN$ given
\begin{itemize}
	\item [(a)] on objects: for all $V\in\VN$, $\Sig_V:=\Sigma(V)$, the Gelfand spectrum of $V$,
	\item [(b)] on arrows: for all inclusions $i_{V'V}:V'\hookrightarrow V$,
	\begin{align}
				\Sig(i_{V'V}):\Sig_V &\lra \Sig_{V'}\\			\nonumber
				\ld &\lmt \ld|_{V'}
	\end{align}
	the canonical restriction map. This map is surjective, continuous, closed and open with respect to the Gelfand topologies. 
\end{itemize}
The spectral presheaf is an object in the topos $\SetVNop$ of presheaves over the context category $\VN$.

The \emph{Bohrification of $\cN$} is the tautological copresheaf $\fN$ over $\VN$ that is given
\begin{itemize}
	\item [(a)] on objects: $\forall V\in\VN:\fN_V:=V$,
	\item [(b)] on arrows: for all inclusions $i_{V'V}:V'\hookrightarrow V$,
	\begin{align}
				\fN(i_{V'V}):\fA_{V'}=V' &\lra \fN_V=V\\			\nonumber
				\hA &\lmt \hA,
	\end{align}
	that is, $\fN(i_{C'C})=i_{C'C}$.
\end{itemize}
The Bohrification $\fN$ is an object in the topos $\SetVN$ of copresheaves over $\VN$.
\end{definition}

We now return to unital $C^*$-algebras.

\begin{definition}
Let $\cA$ be a unital $C^*$-algebra. The normal elements in $\cA$, equipped with the involution inherited from $\cA$ and partial operations of addition and multiplication also inherited from $\cA$, but defined only for (arbitrary pairs of) commuting elements, form the \emph{partial unital $C^*$-algebra associated with $\cA$}, which we will denote by $\cA_{part}$.

Let $\cA,\,\cB$ be unital $C^*$-algebras, and let $\cA_{part},\,\cB_{part}$ be the associated unital partial $C^*$-algebras. A unital map
\begin{equation}
			T:\cA_{part}\ra \cB_{part}
\end{equation}
such that for all commuting elements $\hA,\,\hB$ of $\cA_{part}$ and all $a,b\in\bbC$, we have
\begin{itemize}
	\item [(1)]	$T(a\hA+b\hB) = aT(\hA)+bT(\hB)$,
	\item [(2)] $T(\hA\hB) = T(\hA)T(\hB)$,
	\item [(3)] $T(\hA^*) = T(\hA)^*$,
\end{itemize}
is called a \emph{morphism of unital partial $C^*$-algebras}. This defines the category $\uCpart$ of partial unital $C^*$-algebras. (We only admit objects of the form $\cA_{part}$ coming from a unital $C^*$-algebra $\cA$.) A bijective map $T:\cA_{part}\ra \cB_{part}$ such that both $T$ and $T^{-1}$ are morphisms of unital partial $C^*$-algebras is called an \emph{isomorphism}. An isomorphism $T:\cA_{part}\ra \cB_{part}$ is called an \emph{automorphism}. The automorphisms of $\cA_{part}$ form a group $\Aut_{part}(\cA_{part})$.
\end{definition}

Since every morphism $\phi:\cA\ra\cB$ in the category $\uC$ of unital $C^*$-algebras gives a morphism $\phi:\cA_{part}\ra\cB_{part}$ in $\uCpart$, there is a faithful inclusion $i:\uC\hra\uCpart$ which is the identity on objects (because we only admit objects in $\uCpart$ of the form $\cA_{part}$ for $\cA\in\Ob{\uC}$). The inclusion functor $i$ is not full in general.

Condition (2) in the definition above implies $T(\hA)T(\hB)=T(\hB)T(\hA)$, so $T$ preserves commutativity. The fact that $T$ preserves the involution (condition (3) above) implies that, for all unital abelian $C^*$-subalgebras $C\in\CA$, the image $T(C)$ is norm-closed and hence a unital abelian $C^*$-subalgebra of $\cA$.


\begin{remark}			\label{Rem_fAIsPartialAlg}
As mentioned above, the copresheaf $\fA$ can be shown to be an abelian $C^*$-algebra internally in the topos $\SetCA$ \cite{HLS09a,HLS09b,HLS11}. The fact that internally $\fA$ is abelian while the usual topos-external $C^*$-algebra $\cA$ may be non-abelian is due to the definition of the algebraic structure of $\fA$ internally in the topos $\SetCA$: basically speaking, one only considers algebraic operations within each abelian subalgebra $C\in\CA$, that is, only between commuting, normal operators. One simply ignores non-normal operators (which are not contained in any abelian $C^*$-subalgebra) and forgets addition and multiplication between non-commuting normal operators (which never lie in the same abelian subalgebra), and hence obtains an abelian algebra. Seen topos-externally, this algebra can be identified with the unital partial $C^*$-algebra $\cA_{part}$. Analogous remarks apply to the Bohrification $\fN$ of a von Neumann algebra and the partial von Neumann algebra $\cN_{part}$ (see Def. \ref{Def_PartialVNA} below). We will make use of this in section \ref{Sec_AutomsOfSpecPresh}.
\end{remark}

Van den Berg and Heunen discuss a number of aspects of partial $C^*$-algebras and their morphisms in \cite{vdBHeu10}. The corresponding notion for von Neumann algebras is:

\begin{definition}			\label{Def_PartialVNA}
Let $\cN$ be a von Neumann algebra. The \emph{partial von Neumann algebra $\cN_{part}$ associated with $\cN$} is the partial unital $C^*$-algebra associated with $\cN$. Let $\cM,\,\cN$ be von Neumann algebras, and let $\cM_{part},\,\cN_{part}$ be the associated partial von Neumann algebras. A unital map
\begin{equation}
			T:\cM_{part} \lra \cN_{part}
\end{equation}
that is normal, that is, ultraweakly continuous on (ultraweakly closed) commuting subsets of $\cM_{part}$, and such that for all commuting elements $\hA,\,\hB$ of $\cM_{part}$ and all $a,b\in\bbC$, we have
\begin{itemize}
	\item [(1)]	$T(a\hA+b\hB) = aT(\hA)+bT(\hB)$,
	\item [(2)] $T(\hA\hB) = T(\hA)T(\hB)$,
	\item [(3)] $T(\hA^*) = T(\hA)^*$,
\end{itemize}
is called a \emph{morphism of partial von Neumann algebras}. This defines the category $\VNpart$ of partial von Neumann algebras. (We only admit objects of the form $\cN_{part}$ coming from a von Neumann algebra $\cN$.) A bijective map $T:\cM_{part}\ra \cN_{part}$ such that both $T$ and $T^{-1}$ are morphisms of partial von Neumann algebras is called an \emph{isomorphism}. An isomorphism $T:\cM_{part}\ra \cN_{part}$ is called an \emph{automorphism}. The automorphisms of $\cN_{part}$ form a group $\Aut_{part}(\cN_{part})$.
\end{definition}

The fact that $T$ is normal on commuting subsets implies that, for all abelian von Neumann subalgebras $V\in\cV(\cM)$, the image $T(V)$ is an abelian von Neumann subalgebra of $\cN$. 

\subsection{Categories of presheaves and copresheaves and local duality}
\label{Subsec_CatsOfPreshsAndCopreshs}
As we already saw, in order to discuss algebra morphisms between different (nonabelian) algebras and the corresponding morphisms between their spectral presheaves in the opposite direction, we need to consider presheaves and copresheaves over different base categories, since different algebras $\cA,\,\cB$ have different context categories $\CA,\,\CB$ of abelian subalgebras. 

The following construction of a category of presheaves (respectively copresheaves) with varying base categories was suggested by Nadish de Silva \cite{deS12}:

\begin{definition}			\label{Def_CatOfPresheaves}
Let $\cD$ be a category. The category $\Pre\cD$ of $\cD$-valued presheaves has as its objects functors of the form $\ps P:\cJ\ra\cD^{\op}$, where $\cJ$ is a small category. Arrows are pairs
\begin{equation}
			\pair{H}{\io}: (\ps{\tilde P}:\tilde\cJ\ra\cD^{\op})\lra(\ps P:\cJ\ra\cD^{\op}),
\end{equation}
where $H:\cJ\ra\tilde\cJ$ is a functor and $\io:H^*\ps{\tilde P}\ra\ps P$ is a natural transformation in $(\cD^{\op})^{\cJ}$. Here, $H^*\ps{\tilde P}$ is the presheaf over $\cJ$ given by
\begin{equation}
			\forall J\in\cJ: H^*\ps{\tilde P}_J=\ps{\tilde P}_{H(J)}.
\end{equation}






Let $\ps P_i:\cJ_i\ra\cD^{\op}$, $i=1,2,3$, be three presheaves over different base categories. Given two composable arrows $\pair{H'}{\io'}:\ps P_3\ra\ps P_2$ and $\pair{H}{\io}:\ps P_2\ra\ps P_1$, the composite is $\pair{H'\circ H}{\io\circ\io'}:\ps P_3\ra\ps P_1$, where, for all $J\in\cJ_1$, the natural transformation $\io\circ\io'$ has components
\begin{equation}
			(\io\circ\io')_J=\io_J\circ\io'_{H(J)}:((H'\circ H)^*\ps P_3)_J=(\ps P_3)_{H'(H(J))} \ra (\ps P_2)_{H(J)} \ra (\ps P_1)_J.
\end{equation}
\end{definition}


Every object $\ps P:\cJ\ra\cD^{\op}$ in a presheaf category $\Pre{\cD}$ can be identified with a functor $\ps P:\cJ^{\op}\ra\cD$ and hence can also be regarded as an object in the category $\cD^{\cJ^{\op}}$. We will assume that the category $\cD$ embeds into $\Set$ (that is, there is a forgetful functor $\cD\ra\Set$), so we can think of $\ps P$ as an object in the topos $\Set^{\cJ^{\op}}$, too.

If we interpret a morphism $\pair{H}{\io}:\ps{\tilde P}\ra\ps P$ between presheaves $\ps{\tilde P}:\tilde\cJ^{op}\ra\cD$ and $\ps P:\cJ^{\op}\ra\cD$ as a morphism between the presheaf topoi $\Set^{\tilde\cJ^{\op}}$ and $\Set^{\cJ^{\op}}$, then it consists of the inverse image part $H^*$ of the essential geometric morphism $H:\Set^{\cJ^{\op}}\ra\Set^{\tilde\cJ^{\op}}$ induced by the functor $H:\cJ\ra\tilde\cJ$ between the base categories, and a natural transformation $\io:H^*\tilde{\ps P}\ra\ps P$. Note that $\io$ behaves contravariantly with respect to $H$.

In this way, $\Pre{\cD}$ extends to a category of presheaf topoi, with the particular kind of morphisms between the objects in the presheaf topoi described above.

Analogously, we define:
\begin{definition}			\label{Def_CatOfCopresheaves}
Let $\cC$ be a category. The category $\Cpr\cC$ of $\cC$-valued copresheaves has as its objects functors of the form $\ol Q:\cJ\ra\cC$, where $\cJ$ is a small category. Arrows are pairs
\begin{equation}
			\pair{I}{\theta}: (\ol Q:\cJ\ra\cC)\lra(\ol{\tilde Q}:\tilde\cJ\ra\cC),
\end{equation}
where $I:\cJ\ra\tilde\cJ$ is a functor and $\theta:\ol Q\ra I^*\ol{\tilde Q}$ is a natural transformation in $\cC^{\cJ}$. Here, $I^*\ol{\tilde Q}$ is the copresheaf over $\cJ$ given by
\begin{equation}
			\forall J\in\cJ: I^*\ol{\tilde Q}_J=\ol{\tilde Q}_{I(J)}.
\end{equation}
Let $\ol Q_i:\cJ_i\ra\cD^{\op}$, $i=1,2,3$, be three copresheaves over different base categories. Given two composable arrows $\pair{I}{\theta}:\ol Q_1\ra\ol Q_2$ and $\pair{I'}{\theta'}:\ol Q_2\ra\ol Q_3$, the composite is $\pair{I'\circ I}{\theta'\circ\theta}:\ol Q_1\ra\ol Q_3$, where, for all $J\in\cJ_1$, the natural transformation $\theta'\circ\theta$ has components
\begin{equation}
			(\theta'\circ\theta)_J=\theta'_{I(J)}\circ\theta_J:(\ol Q_1)_J \ra (\ol Q_2)_{I(J)} \ra (\ol Q_3)_{I'(I(J))}=((I'\circ I)^*(\ol Q_3))_J.
\end{equation}
\end{definition}

We assume that there is a forgetful functor from $\cC$ to $\Set$. Then the category $\Cpr{\cC}$ extends to a category of copresheaf topoi, with the morphisms $\pair{I}{\theta}:\ol Q\ra\ol{\tilde Q}$ between copresheaves $\ol Q:\cJ\ra\cC$ and $\ol{\tilde Q}:\tilde\cJ\ra\cC$ in different copresheaf topoi given by the inverse image part $I^*$ of the essential geometric morphism $I:\Set^{\cJ}\ra\Set^{\tilde\cJ}$ induced by the functor $I:\cJ\ra\tilde\cJ$, and a natural transformation $\theta:\ol Q\ra\ol{\tilde Q}$. For copresheaves, the natural transformation $\theta$ behaves covariantly with respect to $I$.

We note that the construction of the spectral presheaf $\SigA$ is based on `local' Gelfand duality: for each context $C\in\CA$, the component $\SigA_C$ is simply given by the Gelfand spectrum of $C$. Hence, we use standard Gelfand duality locally in each abelian part and glue the Gelfand spectra together into a presheaf in the canonical way. Hence, for each $C\in\CA$, the component $\SigA_C$ of the spectral presheaf is the spectrum of the component $\fA_C=C$ of the copresheaf $\fA$. 

We now want to formalise this situation of having a `local duality' between a copresheaf (e.g. of algebras) and a presheaf (e.g. of their spectra).

\begin{proposition}			\label{Prop_DualEquiv}
Let $\cC,\,\cD$ be two categories that are dually equivalent,
\begin{equation}
			\xymatrix{\cC \ar@<1ex>^-{f}[rr] \ar@{}|-{\bot}[rr] && \cD^{\op}. \ar@<1ex>^-{g}[ll]}
\end{equation}
Then there is a dual equivalence
\begin{equation}
			\xymatrix{\Cpr\cC \ar@<1ex>^-{F}[rr] \ar@{}|-{\bot}[rr] && \Pre{\cD}^{\op}, \ar@<1ex>^-{G}[ll]}
\end{equation}
which we call the \emph{local duality} based on the duality between $\cC$ and $\cD$.
\end{proposition}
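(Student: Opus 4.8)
The plan is to build the functors $F$ and $G$ levelwise (context by context) from the given $f$ and $g$, and then check that the additional data carried by a morphism in $\Cpr\cC$ (a functor between base categories plus a covariant natural transformation) is transported to the data carried by a morphism in $\Pre\cD^{\op}$ (the same functor, now seen as inducing a geometric morphism, plus a contravariant natural transformation), with unit and counit inherited from those of the duality $f\dashv g$. Concretely: given a copresheaf $\ol Q:\cJ\ra\cC$, set $F(\ol Q):=f\circ\ol Q:\cJ\ra\cD^{\op}$, which is a $\cD$-valued presheaf over $\cJ$; given a presheaf $\ps P:\cJ\ra\cD^{\op}$, set $G(\ps P):=g\circ\ps P:\cJ\ra\cC$. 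On a morphism $\pair{I}{\theta}:\ol Q\ra\ol{\tilde Q}$ with $I:\cJ\ra\tilde\cJ$ and $\theta:\ol Q\ra I^*\ol{\tilde Q}$, define $F(\pair{I}{\theta}):=\pair{I}{f\ast\theta'}$, where $\theta':\ol Q\Rightarrow I^*\ol{\tilde Q}$ is read in $\cC$ and $f\ast(-)$ denotes whiskering $f$ onto it; since $f$ is contravariant, $f\ast\theta'$ becomes a natural transformation $F(\ol{\tilde Q})\circ\text{(-)}\Rightarrow \ldots$ in the correct direction, i.e. $I^*F(\ol{\tilde Q})\Rightarrow F(\ol Q)$ in $(\cD^{\op})^{\cJ}$, which is exactly the shape required for an arrow $F(\ol{\tilde Q})\to F(\ol Q)$ in $\Pre\cD$ (note the reversal, hence landing in $\Pre\cD^{\op}$). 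One does the analogous thing for $G$.

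The key steps, in order, are: (1) check $F$ and $G$ are well-defined on objects — immediate, since composing a functor $\cJ\ra\cC$ with $f:\cC\ra\cD^{\op}$ (or $g$ in the other direction) again yields a functor into the right target, and smallness of $\cJ$ is preserved; (2) check they are well-defined on morphisms — here one has to verify that the natural transformation obtained by whiskering with $f$ (resp. $g$) has components at each $J\in\cJ$ equal to $f$ (resp. $g$) applied to the component $\theta_J$, and that the naturality squares commute because $f,g$ are functors applied to commuting squares; the directional bookkeeping (covariant $\theta$ for copresheaves versus contravariant $\io$ for presheaves) is handled automatically by the variance of $f$ and $g$; (3) check functoriality of $F$ and $G$, i.e. compatibility with the composition laws spelled out in Definitions \ref{Def_CatOfPresheaves} and \ref{Def_CatOfCopresheaves} — this reduces to the fact that $f$ preserves composites of natural transformations componentwise, together with $(I'\circ I)^* = I^*\circ (I')^*$, and that identity morphisms go to identity morphisms; (4) construct the unit $\eta:\Id\Rightarrow G\circ F$ (on $\SetCAop$-type side) and counit from the unit/counit of $f\dashv g$: at a copresheaf $\ol Q:\cJ\ra\cC$, the component $\eta_{\ol Q}$ is the morphism $\pair{\Id_{\cJ}}{\eta'}$ where $\eta'_J$ is the $J$-th component of the original unit $\id\Rightarrow g\circ f$ evaluated at $\ol Q_J\in\cC$; naturality of $\eta$ in $\ol Q$ follows from naturality of the original unit together with the fact that the base-category functor component of an arrow does not change under $F$ or $G$; (5) verify the triangle identities, which again reduce componentwise to the triangle identities for $f\dashv g$.

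The main obstacle I expect is step (2)/(4): making the variance bookkeeping precise and checking that $\pair{I}{f\ast\theta}$ really satisfies the defining condition of an arrow in $\Pre\cD$ as stated in Definition \ref{Def_CatOfPresheaves} — in particular that $H^*\ps{\tilde P}$ there matches $I^*F(\ol{\tilde Q})$ here under the identification $H=I$, and that the natural transformation we produce lives in $(\cD^{\op})^{\cJ}$ pointing the right way so that $F$ is genuinely a functor $\Cpr\cC\ra\Pre\cD^{\op}$ (not $\Pre\cD$). Once the correct identifications are set up, everything else is a routine diagram chase: each condition to be verified has already been reduced, via the pointwise description of the arrows, to the corresponding property of the duality $f\dashv g$ applied object-by-object in $\cJ$, so no genuinely new input is needed beyond the hypothesis that $\cC$ and $\cD$ are dually equivalent. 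It is worth remarking that, under the forgetful functors $\cC\ra\Set$ and $\cD\ra\Set$ assumed in Subsection \ref{Subsec_CatsOfPreshsAndCopreshs}, this dual equivalence restricts, on objects over a fixed base $\cJ$, to the familiar contravariant adjunction between the copresheaf topos $\Set^{\cJ}$ and the presheaf topos $\Set^{\cJ^{\op}}$ composed with $f_*,g_*$, which is the form in which it will be applied to $\fA$ and $\SigA$ in the next section.
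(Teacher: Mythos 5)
Your proposal is correct and follows essentially the same route as the paper: $F$ and $G$ are defined by postcomposing (i.e.\ applying $f$, respectively $g$, componentwise to the values of the presheaf/copresheaf and to the components $\theta_J$, $\io_J$ of the natural transformations, keeping the base functor fixed), and the equivalence is witnessed by the unit and counit of $f\dashv g$ applied pointwise. The paper is in fact terser than your plan --- it does not spell out functoriality or the triangle identities, which you rightly flag as routine componentwise checks --- so your write-up would, if anything, be a more complete version of the same argument.
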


\begin{proof}
We first show that $g:\cD^{\op}\ra\cC$ induces a functor $G:\Pre\cD^{\op}\ra\Cpr\cC$. On objects, $G$ is given as follows: let $\ps P:\cJ\ra\cD^{\op}$, then $G(\ps P):\cJ\ra\cC$ is defined by
\begin{itemize}
	\item [(a)] $\forall J\in\cJ: G(\ps P)_J:=g(\ps P_J)$,
	\item [(b)] $\forall a:J'\ra J: G(a):=g(\ps P(a)):G(\ps P)_{J'}\ra G(\ps P)_J$.
\end{itemize}
On arrows: let $\pair{H}{\io}:(\ps{\tilde P}:\tilde\cJ\ra\cD^{\op})\ra(\ps P:\cJ\ra\cD^{\op})$ be an arrow in $\Pre\cD$, then
\begin{equation}
			G(\pair{H}{\io}):=\pair{H}{\theta},
\end{equation}
where $\theta:G(\ps P)\ra H^*(G(\ps{\tilde P}))$ is given, for all $J\in\cJ$, by
\begin{equation}
			\theta_J:= g(\io_J): G(\ps P)_J \lra H^*(G(\ps{\tilde P}))_J.
\end{equation}
For this, note that $\io_J:(H^*\ps{\tilde P})_J=\ps{\tilde P}_{H(J)}\ra\ps P_J$, so
\begin{align}
	g(\io_J):g(\ps P_J)=G(\ps P)_J \ra g(\ps{\tilde P}_{H(J)})=G(\ps{\tilde P})_{H(J)}=H^*(G(\ps{\tilde P}))_J.
\end{align}
Analogously, $f:\cC\ra\cD^{\op}$ induces $F:\Cpr\cC\ra\Pre\cD$: let $\ol Q:\cJ\ra\cC$, then $F(\ol Q):\cJ\ra\cD^{\op}$ is defined
\begin{itemize}
	\item [(a)] $\forall J\in\cJ: F(\ol Q)_J:=f(\ol Q_J)$,
	\item [(b)] $\forall a:J'\ra J: F(a):=f(\ol Q(a)):F(\ol Q)_{J'}\ra F(\ol Q)_J$.
\end{itemize}
On arrows: let $\pair{I}{\theta}:(\ol Q:\cJ\ra\cA)\ra(\ol{\tilde Q}:\tilde J\ra\cC)$ be an arrow in $\Cpr\cC$, then
\begin{equation}
			F(\pair{I}{\theta}):=\pair{I}{\io},
\end{equation}
where $\io:I^*(F(\ol{\tilde Q}))\ra F(\ol Q)$ is given, for all $J\in\cJ$, by
\begin{equation}
			\io_J:= f(\theta_J): I^*(F(\ol{\tilde Q}))_J \lra F(\ol Q)_J.
\end{equation}
For this, note that $\theta_J:\ol Q_J \ra (I^*\ol{\tilde Q})_J=\ol{\tilde Q}_{I(J)}$, so
\begin{align}
	f(\theta_J):f(\ol{\tilde Q}_{I(J)})=F(\ol{\tilde Q})_{I(J)}=I^*(F(\ol{\tilde Q}))_J \ra f(\ol Q_J)=F(\ol Q)_J.
\end{align}
Since $f\dashv g$, we have natural transformations $\epsilon:f\circ g\ra \id_{\cD^{op}}$ (the counit of the adjunction) and $\io:\id_{\cC}\ra g\circ f$ (the unit of the adjunction).

Let $\ps P:\cJ\ra\cD^{\op}$ be a $\cD$-valued presheaf over $\cJ$. By definition, we have, for each $J\in\cJ$,
\begin{equation}
			(F(G(\ps P)))_J = f(G(\ps P)_J) = f(g(\ps P_J)),
\end{equation}
and naturality of $\epsilon:f\circ g\ra \id_{\cD^{\op}}$ guarantees that $F\circ G:\Pre\cD\ra\Pre\cD$ is a morphism, namely $F\circ G=\pair{\id_{\cD^{\op}}}{\epsilon}$. 

Analogously, $G\circ F=\pair{\id_{\cC}}{\io}:\Cpr\cC\ra\Cpr\cC$.
\end{proof}

In this kind of duality between $\cD$-valued presheaves and $\cC$-valued copresheaves using `local duality' between $\cD^{\op}$ and $\cC$, there are two reversals of direction: one in the variance of the functors (presheaves and copresheaves), the other in the direction of morphisms between them.

We remark that Prop. \ref{Prop_DualEquiv} holds for presheaves and copresheaves over arbitrary small base categories $\cJ$, not only for posets (which we will mostly consider in the rest of this paper). Moreover, Prop. \ref{Prop_DualEquiv} applies to arbitrary dually equivalent pairs of categories $\cC,\cD$, so one can consider any classical duality and extend it to a duality between $\cC$-valued copresheaves and $\cD$-valued presheaves, not only Gelfand duality (as we will do in the rest of this paper). E.g. in \cite{CanDoe13}, we will show how Stone duality for Boolean algebras can be extended to orthomodular lattices.

\subsection{Application to Gelfand duality}	
Gelfand duality is the dual equivalence
\begin{equation}			\label{Eq_GelfandDuality}
			\xymatrix{\ucC \ar@<1ex>^-{\Sigma}[rr] \ar@{}|-{\bot}[rr] && \KHaus^{\op}. \ar@<1ex>^-{C(-)}[ll]}
\end{equation}
between unital abelian $C^*$-algebras and compact Hausdorff spaces. This gives a dual equivalence
\begin{equation}			\label{Eq_GelfandDualityPreshCopresh}
			\xymatrix{\Cpr{\ucC} \ar@<1ex>^-{\Sigma}[rr] \ar@{}|-{\bot}[rr] && \Pre{\KHaus}^{\op}. \ar@<1ex>^-{C(-)}[ll]}
\end{equation}
	
Let $\cA$ be a (generally \emph{nonabelian}) unital $C^*$-algebra, and let $\CA$ be its context category (cf. Def. \ref{Def_ContextCat}). The Bohrification $\fA:\CA\ra\ucC$, given by $\fA_C=C$ for all $C\in\cA$, is an object in the category $\Cpr{\ucC}$. The spectral presheaf $\SigA$ of $\cA$, given by $\SigA_C=\Sigma(C)$, the Gelfand spectrum of $C$, is an object in $\Pre{\KHaus}$.
	
These two objects correspond to each other via \eq{Eq_GelfandDualityPreshCopresh},
\begin{equation}
			\Sigma(\fA) = \SigA :\CA\lra\KHaus^{\op}
\end{equation}
and
\begin{equation}
			C(\SigA)=\fA:\CA\lra\ucC.
\end{equation}
If $\cA,\,\cB$ are (nonabelian) unital $C^*$-algebras and $\phi:\cA\ra\cB$ is a unital $*$-homomorphism, then we obtain a monotone map
\begin{align}
			\tphi:\CA &\lra\CB\\			\nonumber
			C &\lmt \phi(C).
\end{align}
	
Define a natural transformation $\varphi:\fA\ra\tphi^*\fB$ by
\begin{equation}
			\forall C\in\CA: \varphi_C:=\phi|_C:\fA_C=C \lra (\tphi^*\fB)_C=\fB_{\phi(C)}=\phi(C).
\end{equation}
Then 
\begin{equation}
			\pair{\tphi}{\varphi}:\fA \lra \fB
\end{equation}
is a morphism in $\Cpr\ucC$. This is essentially Prop. 34 in \cite{vdBHeu10} (see version 3).\footnote{Van den Berg and Heunen make a different choice for the direction of their geometric morphism, which leads to an arrow between $\fA$ and $\fB$ in the opposite direction to the morphism $\phi:\cA\ra\cB$. Our choice, which gives an arrow in the same direction as the external morphism $\phi$, seems more natural.}
	
Dually, define a natural transformation $\cG_\phi:\tphi^*\SigB\ra\SigA$ by
\begin{equation}
			\forall C\in\CA: \cG_{\phi;C}:=\Sigma(\phi|_C):(\tphi^*\SigB)_C=\SigB_{\phi(C)}=\Sigma(\phi(C)) \lra \SigA_C=\Sigma(C).
\end{equation}
Then 
\begin{equation}			\label{Eq_MorSpecPreshs}
			\pair{\tphi}{\cG_\phi}:\SigB \lra \SigA
\end{equation}
is a morphism in $\Pre{\KHaus}$.

By construction, $\Sigma(\pair{\tphi}{\varphi})=\pair{\tphi}{\cG_\phi}$ and $C(\pair{\tphi}{\cG_\phi})=\pair{\tphi}{\varphi}$ (using the duality in \eq{Eq_GelfandDualityPreshCopresh}).

Summing up, we have shown:
\begin{theorem}			\label{Thm_TwoFunctors}
There is a contravariant functor
\begin{equation}
			\ps{\mc S}: \uC \lra \Pre{\KHaus}
\end{equation}
from the category of unital $C^*$-algebras to the category of compact Hausdorff space-valued presheaves, given
\begin{itemize}
	\item [(a)] on objects: $\forall\cA\in\Ob{\uC}:\ps{\mc S}(\cA):=\SigA$, the spectral presheaf of $\cA$,
	\item [(b)] on arrows: for all unital $*$-morphisms $(\phi:\cA\ra\cB)\in\Arr{\uC}$,
	\begin{align}
				\ps{\mc S}(\phi)=\pair{\tphi}{\cG_\phi}:\ps{\mc S}(\cB)\ra\ps{\mc S}(\cA).
	\end{align}
\end{itemize}

Moreover, there is a covariant functor
\begin{equation}
			\ol{\mc B}: \uC \lra \Cpr{\ucC}
\end{equation}
from the category of unital $C^*$-algebras to the category of unital abelian $C^*$-algebra-valued copresheaves, given
\begin{itemize}
	\item [(c)] on objects: $\forall\cA\in\Ob{\uC}:\ol{\mc B}(\cA):=\fA$, the Bohrification of $\cA$,
	\item [(d)] on arrows: for all unital $*$-morphisms $(\phi:\cA\ra\cB)\in\Arr{\uC}$,
	\begin{align}
				\ol{\mc B}(\phi)=\pair{\tphi}{\varphi}:\ol{\mc B}(\cA)\ra\ol{\mc B}(\cB).
	\end{align}
\end{itemize}

For each $\cA\in\Ob{\uC}$, the object $\ol{\mc B}(\cA)=\fA$ is an object in the category $\ucC^{\CA}$, which is a subcategory of the copresheaf topos $\SetCA$. By the `local duality' described in \eq{Eq_GelfandDualityPreshCopresh}, $\fA$ corresponds to $\SigA$, which is an object in the category $\KHaus^{\CA^{\op}}$ that is a subcategory of the presheaf topos $\SetCAop$.
\end{theorem}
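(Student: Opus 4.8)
The plan is to assemble Theorem \ref{Thm_TwoFunctors} essentially as a bookkeeping exercise, using the machinery already in place. First I would verify that $\ps{\mc S}$ is a well-defined contravariant functor. On objects there is nothing to check. On arrows, given $\phi:\cA\ra\cB$, Proposition \ref{Prop_AlgMorGivesSpecPreshMor} (equivalently the explicit construction of $\pair{\tphi}{\cG_\phi}$ just above) produces a morphism $\SigB\ra\SigA$ in $\Pre{\KHaus}$, so I only need functoriality. This means checking that $\ps{\mc S}(\id_\cA)=\id_{\SigA}$ and that $\ps{\mc S}(\psi\circ\phi)=\ps{\mc S}(\phi)\circ\ps{\mc S}(\psi)$ for composable $\phi:\cA\ra\cB$, $\psi:\cB\ra\cD$. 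The identity case is immediate: $\widetilde{\id_\cA}=\id_{\CA}$ and $\cG_{\id_\cA}$ has components $\Sigma(\id_C)=\id_{\Sigma(C)}$. For composition I would compute $\widetilde{\psi\circ\phi}(C)=(\psi\circ\phi)(C)=\psi(\phi(C))=\tilde\psi(\tphi(C))$, so the underlying functors compose correctly, and then check that the natural transformation parts compose according to the composition rule in $\Pre{\KHaus}$ spelled out in Definition \ref{Def_CatOfPresheaves}; concretely, at each $C\in\CA$ one gets $\Sigma((\psi\circ\phi)|_C)=\Sigma(\phi|_C)\circ\Sigma(\psi|_{\phi(C)})$ by functoriality of $\Sigma$ and the identity $(\psi\circ\phi)|_C=\psi|_{\phi(C)}\circ\phi|_C$, which matches the formula $(\io\circ\io')_C=\io_C\circ\io'_{H(C)}$.

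Next I would treat $\ol{\mc B}$ in exactly the same way, but now covariantly. On arrows, $\pair{\tphi}{\varphi}:\fA\ra\fB$ was already shown to be a morphism in $\Cpr\ucC$ (this is the reference to \cite{vdBHeu10}, Prop. 34). Functoriality: $\ol{\mc B}(\id_\cA)$ has underlying functor $\id_{\CA}$ and natural-transformation part with components $\id_\cA|_C=\id_C$, so it is the identity; and for composition, $\widetilde{\psi\circ\phi}=\tilde\psi\circ\tphi$ as before, while the $\theta$-parts compose via the copresheaf composition rule in Definition \ref{Def_CatOfCopresheaves}, $(\theta'\circ\theta)_C=\theta'_{I(C)}\circ\theta_C$, which here reads $(\psi\circ\phi)|_C=\psi|_{\phi(C)}\circ\phi|_C$ — the same identity, read in the covariant direction. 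So both $\ps{\mc S}$ and $\ol{\mc B}$ are functors of the asserted variances.

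Finally I would record the compatibility between the two functors via local duality. By Proposition \ref{Prop_DualEquiv} applied to Gelfand duality \eqref{Eq_GelfandDuality}, we have the dual equivalence \eqref{Eq_GelfandDualityPreshCopresh} between $\Cpr\ucC$ and $\Pre{\KHaus}^{\op}$, implemented by $\Sigma$ and $C(-)$. On objects, $\Sigma(\fA)=\SigA$ and $C(\SigA)=\fA$ directly from the definitions of the functors $F,G$ in the proof of Proposition \ref{Prop_DualEquiv}, since $F(\fA)_C=\Sigma(\fA_C)=\Sigma(C)=\SigA_C$ and the restriction maps go over correctly. On arrows, $F(\pair{\tphi}{\varphi})=\pair{\tphi}{\cG_\phi}$ because $F$ sends the underlying functor $\tphi$ to itself and applies $\Sigma$ componentwise to the natural transformation: $\Sigma(\varphi_C)=\Sigma(\phi|_C)=\cG_{\phi;C}$. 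Hence $\ps{\mc S}=F\circ\ol{\mc B}$ (as contravariant functors, using that $F$ reverses the direction of morphisms), which shows that the two functors carry the same information and that $\fA$ and $\SigA$ are genuinely dual objects in the respective (full) subcategories $\ucC^{\CA}\subseteq\SetCA$ and $\KHaus^{\CA^{\op}}\subseteq\SetCAop$ — the last observation being immediate since $\fA$ and $\SigA$ take values in $\ucC$ and $\KHaus$ respectively.

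I do not expect any serious obstacle: the entire statement is a packaging of Proposition \ref{Prop_AlgMorGivesSpecPreshMor}, the explicit constructions of $\pair{\tphi}{\varphi}$ and $\pair{\tphi}{\cG_\phi}$, and Proposition \ref{Prop_DualEquiv}. The only point demanding a little care is getting the \emph{variances and directions of arrows} consistent — $\ps{\mc S}$ is contravariant while $\ol{\mc B}$ is covariant, and inside $\Pre{\KHaus}$ the natural-transformation parts run opposite to the base functors, whereas inside $\Cpr\ucC$ they run parallel — so the composition-law verification for $\ps{\mc S}$ is the one place where a sign-of-direction slip would be easy to make. Concretely, the identity to lean on throughout is the elementary restriction-composition fact $(\psi\circ\phi)|_C=\psi|_{\phi(C)}\circ\phi|_C$ together with the functoriality of $\Sigma$ on $\ucC$; everything else is formal.
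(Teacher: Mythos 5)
Your proposal is correct and follows essentially the same route as the paper: the theorem is assembled from the constructions of $\pair{\tphi}{\varphi}$ and $\pair{\tphi}{\cG_\phi}$, the identity $(\psi\circ\phi)|_C=\psi|_{\phi(C)}\circ\phi|_C$ together with functoriality of $\Sigma$, and the local duality of Prop.~\ref{Prop_DualEquiv}, with $\Sigma(\pair{\tphi}{\varphi})=\pair{\tphi}{\cG_\phi}$ and $C(\pair{\tphi}{\cG_\phi})=\pair{\tphi}{\varphi}$. The only difference is that you spell out the routine identity- and composition-law verifications (matching the composition formulas of Defs.~\ref{Def_CatOfPresheaves} and \ref{Def_CatOfCopresheaves}), which the paper treats as implicit in ``summing up'' the preceding construction; these checks are carried out correctly.
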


As mentioned above, $\fA$ is an internal abelian $C^*$-algebra in $\SetCA$, as was shown in \cite{HLS09a}. Morphisms between presheaves (respectively copresheaves) with different base categories $\cJ,\,\tilde\cJ$, but the same codomain category $\cD$ (respectively $\cC$) are of the kind described in Def. \ref{Def_CatOfPresheaves} (respectively Def. \ref{Def_CatOfCopresheaves}).

Thm. \ref{Thm_TwoFunctors} should be compared with section \ref{Sec_MorsAndMaps}; in particular, the existence of the functor $\ps{\mc S}$ subsumes the result of Prop. \ref{Prop_AlgMorGivesSpecPreshMor}.


\section{Isomorphisms of spectral presheaves and Jordan structure}			\label{Sec_AutomsOfSpecPresh}
It is clear that local duality as in Prop. \ref{Prop_DualEquiv} can give information about the abelian parts of a nonabelian operator algebra, but in order to extract more information, we have to take a more global view. There are two obvious ways to extract more global information: by considering the base category $\CA$ of the spectral presheaf $\SigA$ of a $C^*$-algebra (or von Neumann algebra), and by considering isomorphisms of the spectral presheaves of two unital $C^*$-algebras, in order to determine how they relate to certain isomorphisms of the algebras.

We will first consider von Neumann algebras in subsection \ref{Subsec_vNas}, for which the following characterisation is possible: let $\cM,\,\cN$ be von Neumann algebras with no type $I_2$ summand. Then their spectral presheaves $\SigM,\,\SigN$ are isomorphic if and only if $\cM,\,\cN$ are Jordan $*$-isomorphic, if and only if their context categories $\VM,\,\VN$ are order-isomorphic. This is the content of Thm. \ref{Thm_VNAs}, which is one of the main results of this paper. As a corollary, we obtain that if a von Neumann algebra $\cN$ has no type $I_2$ summand, then the group of automorphisms of the spectral presheaf $\SigN$ of $\cN$ is contravariantly isomorphic to the group of Jordan $*$-automorphisms of $\cN$.

In subsection \ref{Subsec_CStarAlgs}, we characterise isomorphisms between the spectral presheaves of unital $C^*$-algebras $\cA,\,\cB$. Using a recent result by Hamhalter, we show that a $C^*$-algebra that is neither isomorphic to $\bbC^2$ nor to $\mc B(\bbC^2)$ is determined by its spectral presheaf $\SigA$ up to quasi-Jordan isomorphism (which is the same as up to isomorphism as a partial unital $C^*$-algebra), see Thm. \ref{Thm_SpecPresheafDeterminesPartialUnitalCStarAlg}. Furthermore, we will show that for a certain class of unital $C^*$-algebras, the spectral presheaf $\SigA$ determines an algebra $\cA$ up to Jordan $*$-isomorphisms, see Thm. \ref{Thm_SpecPresheafOftenDeterminesUnitalCStarAlg}.

Moreover, we show that there is an injective contravariant group homomorphism from the automorphism group of a unital $C^*$-algebra $\cA$ into the group $\Aut(\SigA)$ of automorphisms of its spectral presheaf, see Prop. \ref{Prop_RepOfAutcAInAutSigA}. This directly implies the analogous result for von Neumann algebras.

\subsection{Von Neumann algebras}			\label{Subsec_vNas}
\begin{definition}			\label{Def_IsomFromSigNToSigM}
Let $\cM,\,\cN$ be von Neumann algebras, and let $\SigM,\,\SigN$ be their spectral presheaves (see Def. \ref{Def_VNSigNfN}). An \emph{isomorphism from $\SigN$ to $\SigM$} is a pair $\pair{\Ga}{\io}$, where $\Ga:\SetVMop\ra\SetVNop$ is an essential geometric isomorphism, induced by an order-isomorphism $\ga:\VM\ra\VN$ (called the \emph{base map}). $\Ga^*:\SetVNop\ra\SetVMop$ is the inverse image functor of the geometric isomorphism $\Ga$, and $\io:\Ga^*(\SigN)\ra\SigM$ is a natural isomorphism for which each component $\io_W:(\Ga^*(\SigN))_W\ra\SigM_W$, where $W\in\VM$, is a homeomorphism. Hence, an isomorphism $\pair{\Ga}{\io}:\SigN\ra\SigM$ acts by
\begin{equation}
			\SigN \stackrel{\Ga^*}{\lra} \Ga^*(\SigN) \stackrel{\io}{\lra} \SigM.
\end{equation}
We will also use the notation $\io\circ\Ga^*$ for an isomorphism $\pair{\Ga}{\io}$ (although this `composition' is not a composition of morphisms in a single category, but the inverse image part of a geometric isomorphism, followed by a natural transformation). If $\cM=\cN$, an isomorphism $\pair{\Ga}{\io}:\SigN\ra\SigN$ is called an \emph{automorphism of $\SigN$}.
\end{definition}

\begin{lemma}			\label{Lem_SigNIsGroup}
The automorphisms of $\SigN$ form a group $\Aut(\SigN)$.
\end{lemma}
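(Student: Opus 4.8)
The plan is to verify the group axioms directly, treating an automorphism $\pair{\Ga}{\io}:\SigN\ra\SigN$ as a pair consisting of an essential geometric isomorphism $\Ga$ (equivalently, by Remark \ref{Rem_EssGeomMorFromFunctors}, an order-isomorphism $\ga:\VN\ra\VN$) together with a natural isomorphism $\io:\Ga^*(\SigN)\ra\SigN$ whose components are homeomorphisms. First I would spell out composition: given $\pair{\Ga_1}{\io_1}$ and $\pair{\Ga_2}{\io_2}$, with base maps $\ga_1,\ga_2$, the composite should be $\pair{\Ga_1\circ\Ga_2}{\io_2\circ\Ga_2^*(\io_1)}$ (or the analogous formula dictated by the convention in Def. \ref{Def_CatOfPresheaves}, reading the pairs as arrows in $\Pre{\KHaus}$). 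One checks that $\Ga_1\circ\Ga_2$ is again an essential geometric isomorphism with base map $\ga_1\circ\ga_2$, that $\Ga_2^*(\io_1)$ has components that are homeomorphisms (since $\Ga_2^*$ just relabels components along $\ga_2$, and $\io_1$ already has homeomorphism components), and hence that the composite natural transformation $\io_2\circ\Ga_2^*(\io_1):(\Ga_1\circ\Ga_2)^*(\SigN)\ra\SigN$ is again a natural isomorphism with homeomorphism components. Thus composition is well-defined and lands in $\Aut(\SigN)$.

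Next I would check associativity, the identity, and inverses. Associativity follows from associativity of composition of functors (for the base maps) together with the interchange law / naturality in the topos $\SetVNop$ for the $\io$-parts; this is the same bookkeeping already carried out in Def. \ref{Def_CatOfPresheaves} for composition in $\Pre{\KHaus}$, so it is essentially inherited. The identity element is $\pair{\Id}{\id_{\SigN}}$, where $\Id$ is the geometric isomorphism induced by $\id_{\VN}$; one checks it is a two-sided unit. For inverses, given $\pair{\Ga}{\io}$ with base order-isomorphism $\ga$, let $\Ga^{-1}$ be the essential geometric isomorphism induced by $\ga^{-1}$; then $(\Ga^{-1})^*=\Ga_*$ is inverse to $\Ga^*$, and the inverse arrow is $\pair{\Ga^{-1}}{(\Ga^{-1})^*(\io^{-1})}$, i.e. one transports the pointwise-inverse natural isomorphism $\io^{-1}:\SigN\ra\Ga^*(\SigN)$ along $\Ga^{-1}$ to obtain a natural isomorphism $\Gamma_*(\SigN)\ra\SigN$ — its components are homeomorphisms because each $\io_V$ is a homeomorphism and homeomorphisms are closed under inverses. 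A short computation using the composition formula confirms that this pair composes with $\pair{\Ga}{\io}$ on both sides to give the identity.

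The only genuine subtlety — and the step I would be most careful about — is matching the variance conventions so that the composite and inverse formulas are literally correct: an isomorphism $\pair{\Ga}{\io}$ is written as $\io\circ\Ga^*$, a geometric-morphism part followed by a topos arrow, so "composition" here is composition of arrows in $\Pre{\KHaus}$ (Def. \ref{Def_CatOfPresheaves}), and one must keep straight that the $\io$-part behaves contravariantly with respect to the base map, whereas the base maps themselves compose covariantly. Once the formulas are written with the correct twist, every verification reduces to (i) "a composite/inverse of order-isomorphisms of $\VN$ is an order-isomorphism," hence induces an essential geometric isomorphism, and (ii) "composites and inverses of natural isomorphisms with homeomorphism components again have homeomorphism components," which is immediate since $\KHaus$-isomorphisms are exactly homeomorphisms. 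I do not expect any real obstacle beyond this bookkeeping; the lemma is essentially a specialisation of the fact that the isomorphisms with a fixed object as both source and target in any category form a group, applied to the object $\SigN$ in $\Pre{\KHaus}$, with the extra observation that the relevant geometric isomorphisms are automatically essential by Remark \ref{Rem_EssGeomMorFromFunctors}.
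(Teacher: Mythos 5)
Your proposal is correct and follows essentially the same route as the paper: the group operation is composition of arrows in $\Pre{\KHaus}$ as in Def.~\ref{Def_CatOfPresheaves} (with the variance twist you flag), the identity is $\pair{\Id}{\id}$, and the inverse is built from the inverse base map together with the pointwise inverse of $\io$. If anything, you are slightly more explicit than the paper's proof, which simply writes the inverse as $\pair{\Ga^{-1}}{\io^{-1}}$ without spelling out that $\io^{-1}$ must be transported along $\ga^{-1}$ to have the required form.
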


\begin{proof}
We define the group operation as
\begin{align}
			\Aut(\SigN)\times\Aut(\SigN) &\lra \Aut(\SigN)\\			\nonumber
			(\pair{\Ga_1}{\io_1},\pair{\Ga_2}{\io_2}) &\lmt \pair{\Ga_2\circ\Ga_1}{\io_1\circ\io_2},
\end{align}
where $\Ga_2\circ\Ga_1:\SetVNop\ra\SetVNop$ is the essential geometric automorphism induced by $\ga_2\circ\ga_1$, the composite of the base maps underlying $\Ga_2$ respectively $\Ga_1$, and $\io_1\circ\io_2$ is the natural isomorphism with components
\begin{equation}
			\forall V\in\VN: (\io_1\circ\io_2)_V=\io_{1;V}\circ\io_{2;\ga_1(V)}
\end{equation}
(cf. Def. \ref{Def_CatOfPresheaves}). The automorphism $\pair{\Id}{\id}$ acts as a neutral element. If $\pair{\Ga}{\io}$ is an automorphism of $\SigN$ with underlying base map $\ga:\VN\ra\VN$, then $\pair{\Ga^{-1}}{\io^{-1}}$ with underyling base map $\ga^{-1}$ is its inverse.
\end{proof}

An isomorphism in the sense of Def. \ref{Def_IsomFromSigNToSigM} from $\SigN$ as an object in the topos $\SetVNop$ to $\SigM$ as an object in the topos $\SetVMop$ corresponds to an isomorphism from $\SigN$ to $\SigM$ as objects in the category $\Pre{\KHaus}$, see Def. \ref{Def_CatOfPresheaves}. Conversely, each isomorphism from $\SigN$ to $\SigM$ in $\Pre{\KHaus}$ determines a unique isomorphism from $\SigN$ as an object of $\SetVNop$ to $\SigM$ as an object of $\SetVMop$.

Every poset $P$ can be seen as a category, with objects the elements $a,b,...\in P$ of the poset, and arrows expressing the order: there is an arrow $a\ra b$ if and only if $a\leq b$. An order-preserving map $P\ra Q$ is the same as a (covariant) functor from $P$ to $Q$. The requirement that the base map $\ga:\VM\ra\VN$ is an order-isomorphism is equivalent to saying that $\ga$ is an invertible covariant functor from $\VM$ to $\VN$, which is equivalent to $\ga$ being an isomorphism in the category $\Pos$ of posets. We could suppress any reference to the base map in Def. \ref{Def_IsomFromSigNToSigM} and just require that there is an essential geometric automorphism $\Ga:\SetVMop\ra\SetVNop$: each essential geometric morphism between presheaf topoi is induced by a (covariant) functor between the base categories of the topoi, that is, a functor $\ga:\VM\ra\VN$ in our case (cf. Rem. \ref{Rem_EssGeomMorFromFunctors}). Moreover, the fact that $\Ga$ is a geometric \emph{iso}morphism implies that the functor $\ga$ must be invertible.


The action of $\Ga^*$ on $\SigN$---and on any other presheaf in the topos---is analogous to the pullback of a bundle. Concretely, the component of $\Ga^*(\SigN)$ at $W\in\VM$ is given by $\Ga^*(\SigN)_W=\SigN_{\ga(W)}$, so we assign the Gelfand spectrum of $\ga(W)\in\VN$ to $W$. 

Note that if and only if $W$ and $\ga(W)$ are isomorphic as abelian $C^*$-algebras, there is a homeomorphism (i.e., an isomorphism in the category of topological spaces) $\io_W:\SigN_{\ga(W)}\ra\SigM_W$ between their spectra. The definition of an isomorphism from $\SigN$ to $\SigM$ requires that such an isomorphism $\io_W$ exists for every $W\in\VM$, and that the $\io_W$ are the components of a natural isomorphism $\io:\Ga^*(\SigN)\ra\SigN$. We will see in Thm. \ref{Thm_VNAs} that in fact, given $\ga$, the existence and uniqueness of $\io$ are guaranteed: every order-isomorphism $\ga:\VM\ra\VN$ induces a unique isomorphism $\pair{\Ga}{\io}:\SigN\ra\SigM$ of the spectral presheaves, and in particular, a unique natural isomorphism $\io:\Ga^*(\SigN)\ra\SigM$.

We present two results that actually will be proven in subsection \ref{Subsec_CStarAlgs} for the more general case of unital $C^*$-algebras, which is why we do not give the proofs here.

\begin{proposition}			\label{Prop_IsomsOfSpecPreshsAreJordanIsoms}
Let $\cM,\,\cN$ be von Neumann algebras. There is a bijective correspondence between isomorphisms $\pair{\Ga}{\io}:\SigN\ra\SigM$ of the spectral presheaves and isomorphisms $T:\cM_{part}\ra\cN_{part}$ of the associated partial von Neumann algebras. Hence, $\cM$ and $\cN$ are isomorphic as partial von Neumann algebras if and only if their spectral presheaves are isomorphic.
\end{proposition}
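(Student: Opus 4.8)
The plan is to establish the correspondence in both directions and show the two constructions are mutually inverse, reducing everything to Gelfand duality applied context-by-context. Given an isomorphism $\pair{\Ga}{\io}:\SigN\ra\SigM$ with base map $\ga:\VM\ra\VN$, I would first use $\ga$ to transport the Bohrification. Since $\ga$ is an order-isomorphism, for each $W\in\VM$ the algebra $\ga(W)\in\VN$ is, a priori, just abstractly an abelian von Neumann algebra; but the homeomorphism $\io_W:\SigN_{\ga(W)}=\Sigma(\ga(W))\ra\Sigma(W)=\SigM_W$ induces, via Gelfand duality (i.e. the functor $C(-)$ applied to $\io_W$), a $*$-isomorphism $t_W:\ga(W)\ra W$ of unital abelian von Neumann algebras. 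Naturality of $\io$ with respect to the inclusions in $\VM$ translates, under $C(-)$, into the statement that the $t_W$ are compatible with restrictions, i.e. for $W'\subseteq W$ in $\VM$ one has $t_W|_{\ga(W')}=t_{W'}$ (here using that $\ga$ preserves order so $\ga(W')\subseteq\ga(W)$, and that $\Ga^*(\SigN)$ has restriction maps given by restriction along $\ga$). The family $\{t_W^{-1}:W\ra\ga(W)\}_{W\in\VM}$ then assembles into a single map $T:\cM_{part}\ra\cN_{part}$: every normal element $\hA$ of $\cM$ lies in some $W\in\VM$ (take the von Neumann algebra it generates together with $1$), and we set $T(\hA):=t_W^{-1}(\hA)$; well-definedness across different containing contexts follows from the compatibility just noted.

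Next I would verify that $T$ is a morphism of partial von Neumann algebras. Linearity, multiplicativity and involution-preservation on commuting pairs $\hA,\hB$ hold because any such pair lies in a common $W\in\VM$, and on $W$ the map $T$ agrees with the $*$-isomorphism $t_W^{-1}$; unitality is the case $W=\bbC 1$. Normality (ultraweak continuity on commuting subsets) holds because each $t_W^{-1}$ is a $*$-isomorphism of von Neumann algebras and hence automatically normal. The same construction applied to $\pair{\Ga^{-1}}{\io^{-1}}$ (which by Lemma~\ref{Lem_SigNIsGroup}-type reasoning is the inverse isomorphism, with base map $\ga^{-1}$) yields a morphism $S:\cN_{part}\ra\cM_{part}$, and tracing through the definitions shows $S=T^{-1}$; hence $T$ is an isomorphism of partial von Neumann algebras.

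Conversely, given an isomorphism $T:\cM_{part}\ra\cN_{part}$, I would produce $\pair{\Ga}{\io}$ as follows. Since $T$ preserves commutativity and the involution, for each $W\in\VM$ the image $T(W)$ is an abelian von Neumann subalgebra of $\cN$ containing $1$ (this is exactly the remark following Def.~\ref{Def_PartialVNA}); moreover $T|_W:W\ra T(W)$ is a unital $*$-isomorphism of abelian von Neumann algebras, and $T^{-1}$ provides the inverse on images, so $\ga:W\mapsto T(W)$ is an order-isomorphism $\VM\ra\VN$. Let $\Ga:\SetVMop\ra\SetVNop$ be the induced essential geometric isomorphism. Applying the Gelfand functor $\Sigma(-)$ to each $T|_W^{-1}:T(W)\ra W$ gives a homeomorphism $\io_W:\SigN_{\ga(W)}=\Sigma(T(W))\ra\Sigma(W)=\SigM_W$, and the fact that $T$ respects the algebra inclusions in $\cM$ makes the family $\{\io_W\}$ natural, i.e. an isomorphism $\io:\Ga^*(\SigN)\ra\SigM$. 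One then checks that the two assignments $\pair{\Ga}{\io}\mapsto T$ and $T\mapsto\pair{\Ga}{\io}$ are mutually inverse: this is immediate on base maps (it is just Gelfand duality on objects of $\VM$, $\VN$), and on the second components it amounts to the statement that $C(-)$ and $\Sigma(-)$ are quasi-inverse equivalences, applied componentwise.

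\textbf{Main obstacle.} The genuine content is not any single step but the bookkeeping of naturality: one must be careful that ``$\io$ natural for the inclusions in $\VM$'' is precisely equivalent, under the componentwise Gelfand equivalence, to ``the $t_W$ are compatible with restriction along $\ga$'', which in turn is precisely what makes the local pieces $t_W^{-1}$ glue to a single partial-algebra map $T$ that is simultaneously additive and multiplicative on \emph{all} commuting pairs, not merely within one fixed context. A commuting pair $\hA,\hB$ may lie in many different $W\in\VM$, and one needs the gluing to be consistent across all of them; this is where the directedness of $\VM$ (any two elements below a common one, via the generated algebra) and the restriction-compatibility of the $t_W$ are used together. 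The normality clause for von Neumann algebras is by contrast automatic, since $*$-isomorphisms of von Neumann algebras are normal, so the von Neumann case is genuinely no harder than the $C^*$-case at this point—consistent with the paper's remark that the proof is deferred to the more general setting of subsection~\ref{Subsec_CStarAlgs}.
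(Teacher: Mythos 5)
Your proposal is correct and follows essentially the same route as the paper: the paper establishes this proposition by deferring to the unital $C^*$-algebra case (Lemmas \ref{Lem_NatIsoGivesPartialAutom} and \ref{Lem_PartialMorGivesNatIso}, yielding Prop.\ \ref{Prop_IsomsOfSpecPreshsGiveIsomsAsPartialAlgs}), whose proofs are exactly your componentwise Gelfand-duality argument plus naturality and gluing, and your observation that normality is automatic for $*$-isomorphisms of abelian von Neumann subalgebras is the only extra point needed in the von Neumann setting. One cosmetic remark: what you call ``directedness of $\VM$'' should be phrased as the fact that any two contexts containing a given element (or commuting pair) have a common lower bound, namely the abelian von Neumann algebra generated by it and $\hat 1$ --- $\VM$ itself is not upward directed --- but the argument you actually run uses only this and is fine.
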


This result follows straightforwardly from the analogous result for unital $C^*$-algebras, see Prop. \ref{Prop_IsomsOfSpecPreshsGiveIsomsAsPartialAlgs} below (using Lemmas \ref{Lem_NatIsoGivesPartialAutom} and \ref{Lem_PartialMorGivesNatIso}).

\begin{corollary}			\label{Cor_ContravGroupIsoForVNAs}
Let $\cN$ be a von Neumann algebra. There is a contravariant group isomorphism
\begin{equation}
			A:\Aut(\SigN) \lra \Aut_{part}(\cN_{part})
\end{equation}
between $\Aut(\SigN)$, the group of automorphisms of the spectral presheaf of $\cN$, and $\Aut_{part}(\cN_{part})$, the group of automorphisms of the partial von Neumann algebra $\cN_{part}$.
\end{corollary}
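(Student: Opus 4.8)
The plan is to construct the contravariant group isomorphism $A$ directly from the bijective correspondence established in Prop.~\ref{Prop_IsomsOfSpecPreshsAreJordanIsoms}, and then verify that it reverses composition. First I would specialise Prop.~\ref{Prop_IsomsOfSpecPreshsAreJordanIsoms} to the case $\cM=\cN$: it provides, for each automorphism $\pair{\Ga}{\io}\in\Aut(\SigN)$, a uniquely determined isomorphism $T:\cN_{part}\ra\cN_{part}$, i.e.\ an element of $\Aut_{part}(\cN_{part})$, and conversely. Define $A(\pair{\Ga}{\io}):=T$. Bijectivity of $A$ as a map of underlying sets is then immediate from the bijective correspondence in the proposition, so the only substantive point is that $A$ is a contravariant group homomorphism, i.e.\ that
\begin{equation}
			A(\pair{\Ga_2\circ\Ga_1}{\io_1\circ\io_2}) = A(\pair{\Ga_1}{\io_1})\circ A(\pair{\Ga_2}{\io_2}),
\end{equation}
with the group law on $\Aut(\SigN)$ as defined in Lemma~\ref{Lem_SigNIsGroup}.

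To prove the contravariance, I would trace through how the correspondence $\pair{\Ga}{\io}\leftrightarrow T$ is built (via Lemmas~\ref{Lem_NatIsoGivesPartialAutom} and \ref{Lem_PartialMorGivesNatIso} in the $C^*$-algebra case). On the base-category side, an automorphism $\pair{\Ga}{\io}$ has an underlying order-isomorphism $\ga:\VN\ra\VN$, and the associated partial automorphism $T$ restricts on each context to $T|_{\ga^{-1}(V)}:\ga^{-1}(V)\ra V$ obtained from the homeomorphism $\io_V:\SigN_{\ga(V)}\ra\SigN_V$ via Gelfand duality (so $T$ maps $\ga^{-1}(V)$ onto $V$ for each $V\in\VN$; equivalently $T$ sends a context $C$ to $\ga(C)$). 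For a composite, the base map is $\ga_2\circ\ga_1$ and the natural isomorphism has components $(\io_1\circ\io_2)_V=\io_{1;V}\circ\io_{2;\ga_1(V)}$, so applying Gelfand duality context-by-context shows the associated partial automorphism is, on each context, the composite of the Gelfand-dual maps in the order that reverses $\ga_1$ then $\ga_2$ — i.e.\ $T_1\circ T_2$ where $T_i=A(\pair{\Ga_i}{\io_i})$. The key computational input is just functoriality of $\Sigma(-)$ and $C(-)$ (they reverse composition), combined with the explicit formula for the group law from Lemma~\ref{Lem_SigNIsGroup}. Finally I would note $A(\pair{\Id}{\id})=\id$, which together with the homomorphism property and bijectivity makes $A$ a contravariant group isomorphism.

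\textbf{Main obstacle.} The hard part is not this corollary itself — once Prop.~\ref{Prop_IsomsOfSpecPreshsAreJordanIsoms} is granted, everything reduces to bookkeeping about how composition of the pairs $\pair{\Ga}{\io}$ translates, under Gelfand duality applied componentwise, into composition of partial automorphisms. The one place care is needed is matching conventions: the group multiplication on $\Aut(\SigN)$ (Lemma~\ref{Lem_SigNIsGroup}) writes the composite of $\pair{\Ga_1}{\io_1}$ after $\pair{\Ga_2}{\io_2}$ as $\pair{\Ga_2\circ\Ga_1}{\io_1\circ\io_2}$, which already has a built-in reversal coming from the presheaf-morphism convention of Def.~\ref{Def_CatOfPresheaves}; one must check that this reversal, composed with the further reversal introduced by Gelfand duality, yields exactly one net reversal relative to composition in $\Aut_{part}(\cN_{part})$, so that $A$ is genuinely contravariant (not covariant). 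Spelling this out carefully on one representative context $V\in\VN$ suffices, since naturality then propagates it to all of $\VN$. Since the proposition is stated to follow from the $C^*$-algebra results via Lemmas~\ref{Lem_NatIsoGivesPartialAutom} and \ref{Lem_PartialMorGivesNatIso}, I would simply invoke those and present the corollary as a short consequence.
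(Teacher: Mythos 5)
Your overall route is the one the paper itself follows: specialise the bijective correspondence of Prop.~\ref{Prop_IsomsOfSpecPreshsAreJordanIsoms} (i.e.\ Lemmas~\ref{Lem_NatIsoGivesPartialAutom} and \ref{Lem_PartialMorGivesNatIso} with $\cM=\cN$) to get a bijection $A$, and then check against the group law of Lemma~\ref{Lem_SigNIsGroup} that $A$ reverses composition; this is exactly the remark preceding Cor.~\ref{Cor_ContravGroupIso} in the $C^*$-algebra case. However, the step you yourself single out as the crux is written with the composition order inverted, and as stated it would fail. With the group law of Lemma~\ref{Lem_SigNIsGroup}, $\pair{\Ga_2\circ\Ga_1}{\io_1\circ\io_2}$ is the composite arrow ``$\pair{\Ga_1}{\io_1}$ after $\pair{\Ga_2}{\io_2}$'', whose components are
\begin{equation}
			(\io_1\circ\io_2)_V=\io_{1;V}\circ\io_{2;\ga_1(V)}:\SigN_{\ga_2(\ga_1(V))}\lra\SigN_V.
\end{equation}
Gelfand duality applied componentwise reverses this composite: the dual of $\io_{1;V}$ is $T_1|_V:V\ra\ga_1(V)$, the dual of $\io_{2;\ga_1(V)}$ is $T_2|_{\ga_1(V)}:\ga_1(V)\ra\ga_2(\ga_1(V))$, so the partial automorphism associated to the product restricts on $V$ to $T_2|_{\ga_1(V)}\circ T_1|_V$, i.e.\ it is $T_2\circ T_1$ (first $T_1$, then $T_2$). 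Hence the identity to verify is
\begin{equation}
			A(\pair{\Ga_2\circ\Ga_1}{\io_1\circ\io_2})=A(\pair{\Ga_2}{\io_2})\circ A(\pair{\Ga_1}{\io_1}),
\end{equation}
which is precisely what the paper records before Cor.~\ref{Cor_ContravGroupIso} (``the partial $*$-automorphism corresponding to the composite automorphism is $T_2\circ T_1$'').

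Your displayed equation $A(\pair{\Ga_2\circ\Ga_1}{\io_1\circ\io_2})=A(\pair{\Ga_1}{\io_1})\circ A(\pair{\Ga_2}{\io_2})$ and your in-text conclusion ``the associated partial automorphism is \dots\ $T_1\circ T_2$'' assert, in standard composition notation, the \emph{covariant} homomorphism property; that identity is false in general, since $\Aut_{part}(\cN_{part})$ need not be abelian, and it contradicts your own (correct) qualitative analysis that there is exactly one net reversal, so that $A$ is contravariant. If you intended diagrammatic (left-to-right) order on the right-hand sides, you must declare it, and it then clashes with the standard-order notation $\Ga_2\circ\Ga_1$ taken from the paper on the left-hand side. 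There is also a minor indexing slip: the homeomorphism $\io_V$ Gelfand-dualises to $T|_V:V\ra\ga(V)$, so the restriction $T|_{\ga^{-1}(V)}:\ga^{-1}(V)\ra V$ comes from $\io_{\ga^{-1}(V)}$, not from $\io_V$; your parenthetical ``$T$ sends a context $C$ to $\ga(C)$'' is the correct statement. With the composition order and indices corrected, the remaining ingredients you list ($A(\pair{\Id}{\id})=\id$, bijectivity from Prop.~\ref{Prop_IsomsOfSpecPreshsAreJordanIsoms}) do complete the proof, and it then coincides with the paper's argument.
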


\begin{proposition}			\label{Prop_RepOfAutcNInAutSigN}
Let $\cN$ be a von Neumann algebra, and let $\SigN$ be its spectral presheaf. Let $\Aut(\cN)$ be the automorphism group of $\cN$, and let $\Aut(\SigN)$ be the automorphism group of $\SigN$. There is an injective group homomorphism from $\Aut(\cN)$ to $\Aut(\SigN)^{op}$ (that is, there is an injective, contravariant group homomorphism from $\Aut(\cN)$ into $\Aut(\SigN)$), given by
\begin{align}
			\Aut(\cN) &\lra \Aut(\SigN)^{op}\\			\nonumber
			\phi &\lmt \pair{\Phi}{\cG_\phi}=\cG_{\phi}\circ\Phi^*,
\end{align}
where $\pair{\Phi}{\cG_\phi}$ is the automorphism of $\SigN$ induced by $\phi$ as in Section \ref{Sec_MorsAndMaps} (see also Thm. \ref{Thm_TwoFunctors}).
\end{proposition}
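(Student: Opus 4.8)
The statement to prove is Prop.~\ref{Prop_RepOfAutcNInAutSigN}: the assignment $\phi \mapsto \pair{\Phi}{\cG_\phi}$ is an injective contravariant group homomorphism $\Aut(\cN) \to \Aut(\SigN)$. The plan is to proceed in three stages: (i) check that $\pair{\Phi}{\cG_\phi}$ really is an \emph{automorphism} of $\SigN$ (not merely an endomorphism), (ii) verify that the assignment reverses composition and sends $\id$ to $\id$, and (iii) prove injectivity. For (i), I would observe that a $*$-automorphism $\phi:\cN\to\cN$ restricts, for each $V\in\VN$, to a $*$-isomorphism $\phi|_V:V\to\phi(V)$, hence the induced poset map $\tphi:\VN\to\VN$, $V\mapsto\phi(V)$, is an order-\emph{iso}morphism (its inverse is $\widetilde{\phi^{-1}}$); therefore $\Phi:\SetVNop\to\SetVNop$ is an essential geometric \emph{iso}morphism (Rem.~\ref{Rem_EssGeomMorFromFunctors}). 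Moreover each component $\cG_{\phi;V}=\Sigma(\phi|_V):\Sigma(\phi(V))\to\Sigma(V)$ is a homeomorphism because $\phi|_V$ is an isomorphism of abelian $C^*$-algebras and $\Sigma$ is a (contravariant) equivalence. Hence $\cG_\phi:\Phi^*(\SigN)\to\SigN$ is a natural \emph{iso}morphism, so $\pair{\Phi}{\cG_\phi}$ satisfies Def.~\ref{Def_IsomFromSigNToSigM} with base map $\ga=\tphi$ and is an element of $\Aut(\SigN)$.

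\textbf{Functoriality.} For (ii), I would take $\phi,\psi\in\Aut(\cN)$ and compare $\pair{\Phi}{\cG_\phi}\pair{\Psi}{\cG_\psi}$ (composition in $\Aut(\SigN)$ as defined in Lem.~\ref{Lem_SigNIsGroup}) with the image of $\psi\circ\phi$. On the level of base maps, $\widetilde{\psi\circ\phi}(V)=\psi(\phi(V))=\tilde\psi(\tphi(V))$, so the underlying poset maps compose in the order dictated by the group law in Lem.~\ref{Lem_SigNIsGroup}; this already forces the contravariance. For the natural-transformation parts one checks componentwise: by the group operation in Lem.~\ref{Lem_SigNIsGroup}, $(\cG_\phi\circ\cG_\psi)_V=\cG_{\phi;V}\circ\cG_{\psi;\tphi(V)}=\Sigma(\phi|_V)\circ\Sigma(\psi|_{\phi(V)})=\Sigma(\psi|_{\phi(V)}\circ\phi|_V)=\Sigma((\psi\circ\phi)|_V)=\cG_{\psi\circ\phi;V}$, using functoriality of $\Sigma$ and the fact that $\psi|_{\phi(V)}\circ\phi|_V=(\psi\circ\phi)|_V$ as maps $V\to\psi(\phi(V))$. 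Hence $\pair{\Phi}{\cG_\phi}\pair{\Psi}{\cG_\psi}=\pair{\widetilde{\psi\circ\phi}}{\cG_{\psi\circ\phi}}$, i.e.\ the assignment is a group anti-homomorphism; and $\phi=\id$ visibly gives $\tphi=\Id$ and $\cG_\phi=\id$, so the neutral element is preserved.

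\textbf{Injectivity.} For (iii), suppose $\pair{\Phi}{\cG_\phi}=\pair{\Id}{\id}$. Then the base map $\tphi$ is the identity on $\VN$, so $\phi(V)=V$ for every $V\in\VN$; in particular $\phi$ fixes every abelian von Neumann subalgebra setwise, hence fixes (setwise) every \emph{maximal} abelian subalgebra, and more simply, for every normal $\hA\in\cN$ the algebra $W^*(\hA)$ generated by $\hA$ is fixed, so $\phi(\hA)\in W^*(\hA)$. Now the condition $\cG_\phi=\id$ says that for each $V$, $\Sigma(\phi|_V)=\id_{\Sigma(V)}$, i.e.\ $\lambda\circ\phi|_V=\lambda$ for every character $\lambda$ of $V$; since $V$ is abelian, characters separate its points, so $\phi|_V=\id_V$ for every $V\in\VN$. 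As every self-adjoint (indeed every normal) element of $\cN$ lies in some $V\in\VN$, this forces $\phi$ to be the identity on all normal elements, and since $\cN$ is linearly spanned by its self-adjoint (hence normal) elements, $\phi=\id_\cN$. Therefore the kernel of the anti-homomorphism is trivial and the map is injective. Finally I would note that the statement for von Neumann algebras already follows, as remarked, from the $C^*$-algebra version, but the direct argument above is self-contained given the results cited.

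\textbf{Main obstacle.} The only genuinely non-formal point is (i): one must be sure that $\tphi$ is an order-\emph{isomorphism} and that the components $\cG_{\phi;V}$ are homeomorphisms \emph{compatible into a natural isomorphism} — the naturality square is exactly the one verified in Section~\ref{Sec_MorsAndMaps} (with $\cB=\cN$), so no new work is needed there; the remaining verifications in (ii) and (iii) are straightforward bookkeeping with restrictions of $\phi$ and functoriality of the Gelfand spectrum functor.
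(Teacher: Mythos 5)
Your proposal is correct, but it takes a different route from the paper. The paper disposes of the von Neumann case by deferring to the analogous $C^*$-result (Prop.~\ref{Prop_RepOfAutcAInAutSigA}), whose proof in turn factors through the partial-algebra machinery: restrict $\phi$ to $\cA_{part}$ to get an element of $\Aut_{part}(\cA_{part})$, then transport it across the contravariant group isomorphism $\Aut(\SigA)\cong\Aut_{part}(\cA_{part})$ of Cor.~\ref{Cor_ContravGroupIso} (resting on Lemmas \ref{Lem_NatIsoGivesPartialAutom} and \ref{Lem_PartialMorGivesNatIso}); injectivity is obtained by noting that distinct automorphisms of $\cA$ already differ on some self-adjoint element, hence restrict to distinct partial automorphisms. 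You instead argue directly at the level of the spectral presheaf: you check that $\pair{\Phi}{\cG_\phi}$ is an automorphism in the sense of Def.~\ref{Def_IsomFromSigNToSigM} (base map $\tphi$ an order-isomorphism with inverse $\widetilde{\phi^{-1}}$, components $\Sigma(\phi|_V)$ homeomorphisms, naturality as in Section~\ref{Sec_MorsAndMaps}), you establish anti-multiplicativity by a componentwise computation with the composition law of Lemma~\ref{Lem_SigNIsGroup} and contravariant functoriality of $\Sigma$, and you prove injectivity by showing the kernel is trivial, using that characters separate points of each abelian $V$ and that $\cN$ is spanned by its self-adjoint elements. Your computations are correct (in particular the identity $\cG_{\phi;V}\circ\cG_{\psi;\tphi(V)}=\cG_{\psi\circ\phi;V}$, which is exactly where the contravariance comes from). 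What the two approaches buy: the paper's route reuses already-established correspondences and presents the statement as a corollary, though strictly speaking the von Neumann case is only ``analogous'' since $\Aut(\SigN)$ is built over $\VN$ rather than over the poset of abelian $C^*$-subalgebras; your route is self-contained for the von Neumann case, avoids the detour through $\cN_{part}$, and supplies explicitly the injectivity argument that the paper handles somewhat tersely.
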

This follows easily from Prop. \ref{Prop_RepOfAutcAInAutSigA}, the analogous result for unital $C^*$-algebras.

\begin{definition}
Let $\cM,\,\cN$ be von Neumann algebras, and let $\fM,\,\fN$ be their Bohrifications (see Def. \ref{Def_VNSigNfN}). An \emph{isomorphism from $\fM$ to $\fN$} is a pair $\pair{\Ga}{\kappa}$, where $\Ga:\SetVM\ra\SetVN$ is an essential geometric isomorphism, induced by an order-isomorphism $\ga:\VM\ra\VN$ (the \emph{base map}). $\Ga^*:\SetVN\ra\SetVM$ is the inverse image functor of the geometric automorphism $\Ga$, and $\kappa:\fM\ra\Ga^*(\fM)$ is a natural isomorphism for which each component $\kappa_W:\fM_W\ra\Ga^*(\fN)_W$, $W\in\VM$, is a unital $*$-isomorphism (of abelian $C^*$-algebras). If $\cM=\cN$, an isomorphism $\pair{\Ga}{\kappa}:\fN\ra\fN$ is called an \emph{automorphism of $\fN$}.
\end{definition}

The automorphisms of $\fN$ form a group, which we denote as $\Aut(\fN)$.

An isomorphism from $\fM$ as an object of the topos $\SetVM$ to $\fN$ as an object of the topos $\SetVN$ in the sense defined above corresponds to an isomorphism from $\fM$ to $\fN$ in the category $\Cpr{\ucC}$ of copresheaves with values in unital abelian $C^*$-algebras (see Def. \ref{Def_CatOfCopresheaves}). Prop. \ref{Prop_DualEquiv} implies:

\begin{corollary}			\label{Cor_AutSigNContravIsomToAutofN}
Let $\cM,\,\cN$ be von Neumann algebras. Every isomorphism $\pair{\tilde\Ga}{\kappa}:\fM\ra\fN$ between their Bohrifications corresponds to an isomorphism $\pair{\Ga}{\io}:\SigN\ra\SigM$ in the opposite direction between their spectral presheaves. If $\cM=\cN$, there is a contravariant group isomorphism
\begin{equation}
			\Aut(\SigN) \lra \Aut(\fN)
\end{equation}
between the group of automorphisms of the spectral presheaf of $\cN$ and the group of automorphisms of the Bohrification of $\cN$.
\end{corollary}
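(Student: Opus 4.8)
The plan is to show that the statement follows essentially directly from the local duality of Proposition \ref{Prop_DualEquiv} applied to Gelfand duality, specialised to copresheaves and presheaves over the context category $\VN$. Recall that Gelfand duality $\ucC \simeq \KHaus^{\op}$ extends, by Prop. \ref{Prop_DualEquiv}, to a dual equivalence $\Cpr{\ucC} \simeq \Pre{\KHaus}^{\op}$ via functors $\Sigma$ and $C(-)$. First I would observe that both $\fN$ and $\SigN$ are objects over the \emph{same} base category $\VN$, and that under the local duality $\Sigma(\fN) = \SigN$ and $C(\SigN) = \fN$, exactly as noted in the discussion following Thm. \ref{Thm_TwoFunctors}. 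The point is then that a functor, being part of an equivalence, induces a bijection on hom-sets, and in particular sends isomorphisms to isomorphisms.

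The key steps, in order, are: (1) Identify an isomorphism $\pair{\tilde\Ga}{\kappa}:\fM\ra\fN$ in the sense of the Definition just above the corollary with an isomorphism $\pair{\ga}{\kappa}:\fM\ra\fN$ in the category $\Cpr{\ucC}$, where $\ga:\VM\ra\VN$ is the underlying order-isomorphism (invertible functor) and $\kappa$ is the accompanying natural isomorphism; this is the content of the sentence preceding the corollary in the excerpt and only requires unwinding Def. \ref{Def_CatOfCopresheaves}. (2) Apply the functor $\Sigma:\Cpr{\ucC}\ra\Pre{\KHaus}^{\op}$ from \eqref{Eq_GelfandDualityPreshCopresh} to this isomorphism. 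Since $\Sigma$ is (half of) an equivalence, it is fully faithful and essentially surjective, hence maps isomorphisms to isomorphisms and is bijective on isomorphism classes. The image $\Sigma(\pair{\ga}{\kappa})$ is an isomorphism in $\Pre{\KHaus}^{\op}$, i.e.\ an isomorphism $\SigN\ra\SigM$ in $\Pre{\KHaus}$ — note the reversal of direction, which accounts for the phrase ``in the opposite direction'' in the statement. (3) Unwind this image concretely: the underlying base functor is again $\ga:\VM\ra\VN$ (recorded as $\Ga$ at the level of the induced essential geometric isomorphism $\SetVMop\ra\SetVNop$), and the natural transformation component is $\io_W := \Sigma(\kappa_W) = \Sigma(\kappa_W):(\Ga^*\SigN)_W=\Sigma(\ga(W))\ra\Sigma(W)=\SigM_W$, which is a homeomorphism precisely because $\kappa_W$ is a $*$-isomorphism; this shows $\pair{\Ga}{\io}$ satisfies Def. \ref{Def_IsomFromSigNToSigM}. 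Conversely, applying $C(-):\Pre{\KHaus}^{\op}\ra\Cpr{\ucC}$ recovers $\pair{\tilde\Ga}{\kappa}$ up to the canonical unit/counit isomorphisms of the adjunction, establishing that the correspondence is a bijection.

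For the final assertion, specialise to $\cM=\cN$. By Lemma \ref{Lem_SigNIsGroup} the isomorphisms $\SigN\ra\SigN$ form the group $\Aut(\SigN)$, and likewise the isomorphisms $\fN\ra\fN$ form $\Aut(\fN)$. The bijection from steps (1)--(3) restricts to a bijection $\Aut(\fN)\ra\Aut(\SigN)$, and I would check that it reverses composition: if $\pair{\ga_1}{\kappa_1}$ and $\pair{\ga_2}{\kappa_2}$ are automorphisms of $\fN$ with composite having base map $\ga_2\circ\ga_1$, then — tracing through the composition laws in Def. \ref{Def_CatOfCopresheaves} and Def. \ref{Def_CatOfPresheaves} together with the variance reversal built into $\Sigma$ (a functor $\Cpr{\ucC}\ra\Pre{\KHaus}^{\op}$, hence contravariant into $\Pre{\KHaus}$) and with the group law of Lemma \ref{Lem_SigNIsGroup} — the image composite has base map $\ga_1\circ\ga_2$. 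Hence the induced map $\Aut(\SigN)\ra\Aut(\fN)$ is a contravariant group isomorphism, as claimed.

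I expect the only genuine obstacle to be bookkeeping: keeping consistent track of the two independent reversals of direction — one from the variance of presheaves versus copresheaves, one from the direction of the morphisms $\pair{H}{\io}$ versus $\pair{I}{\theta}$ — so that the composition in $\Aut(\fN)$ really does map to the \emph{opposite} composition in $\Aut(\SigN)$ and not to the same one. This is exactly the ``two reversals of direction'' phenomenon remarked on after Prop. \ref{Prop_DualEquiv}, and making it precise is essentially a matter of carefully matching the composite formula $(\io_1\circ\io_2)_V=\io_{1;V}\circ\io_{2;\ga_1(V)}$ of Lemma \ref{Lem_SigNIsGroup} against the copresheaf composition law and the action of $\Sigma$ on each component; no new idea is required beyond Prop. \ref{Prop_DualEquiv}.
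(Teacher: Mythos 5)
Your proposal is correct and follows essentially the same route as the paper, which likewise identifies an isomorphism of Bohrifications in the topos sense with an isomorphism in $\Cpr{\ucC}$ and then invokes Prop.~\ref{Prop_DualEquiv} (local duality applied to Gelfand duality, i.e.\ \eqref{Eq_GelfandDualityPreshCopresh}) to obtain the corresponding isomorphism $\SigN\ra\SigM$ in the opposite direction and, for $\cM=\cN$, the contravariant group isomorphism. Your explicit unwinding --- base map unchanged, components $\io_W=\Sigma(\kappa_W)$, and the composition bookkeeping against Def.~\ref{Def_CatOfPresheaves}, Def.~\ref{Def_CatOfCopresheaves} and Lemma~\ref{Lem_SigNIsGroup} --- just spells out details the paper leaves implicit.
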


Since the Bohrifications $\fM,\,\fN$ correspond to the (topos-external) partial von Neumann algebras $\cM_{part},\,\cN_{part}$ (cf. Rem. \ref{Rem_fAIsPartialAlg}), Prop. \ref{Prop_IsomsOfSpecPreshsAreJordanIsoms} is the `external version' of Cor. \ref{Cor_AutSigNContravIsomToAutofN}.

\begin{proposition}			\label{Prop_PartVNIsomsAreProjLattIsoms}
Let $\cM,\,\cN$ be von Neumann algebras, and let $\cM_{part}$, $\cN_{part}$ be the corresponding partial von Neumann algebras. There is a bijective correspondence between isomorphisms of complete orthomodular lattices
\begin{equation}
			\tT: \PM \lra \PN
\end{equation}
of the projection lattices of $\cM,\,\cN$ and isomorphisms
\begin{equation}
			T: \cM_{part} \lra \cN_{part}
\end{equation}
of partial von Neumann algebras. Hence, $\cM$ and $\cN$ are isomorphic as partial von Neumann algebras if and only if their projection lattices are isomorphic.
\end{proposition}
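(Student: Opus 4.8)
The plan is to build the correspondence by restriction in one direction and by a fibrewise gluing construction in the other, the crucial external input being the classical duality between abelian von Neumann algebras and complete Boolean algebras.

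First I would treat the easy direction, sending an isomorphism $T:\cM_{part}\ra\cN_{part}$ to $\tT:=T|_{\PM}$. For a projection $P$ one has $T(P)=T(P^2)=T(P)^2$ and $T(P)=T(P^*)=T(P)^*$ (legitimate since $P$ commutes with itself), so $T(P)\in\PN$, and symmetrically for $T^{-1}$; hence $\tT:\PM\ra\PN$ is a bijection. Since $P\leq Q$ iff $PQ=P$, and then $P,Q$ commute, condition~(2) yields $\tT(P)\leq\tT(Q)$, and the same applied to $T^{-1}$ shows $\tT$ is an order-isomorphism; unitality together with condition~(1) applied to the commuting pair $P,1$ gives $\tT(1-P)=1-\tT(P)$. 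An order-isomorphism of complete lattices preserves arbitrary joins and meets, so $\tT$ is an isomorphism of complete orthomodular lattices.

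The substantial direction is the converse: extending a given complete orthomodular lattice isomorphism $\tT:\PM\ra\PN$ to a partial isomorphism. I would use the well-known order-isomorphism $V\mapsto\cP(V)$ between $\VN$ and the poset of complete Boolean subalgebras of $\PN$ (inverse $B\mapsto B''$), noting that sups in $\cP(V)$ agree with those in $\PN$. Since $\tT$ carries complete Boolean subalgebras to complete Boolean subalgebras (it preserves ambient joins, meets, orthocomplements, and the lattice-theoretic commutativity relation), $\ga(V):=\tT(\cP(V))''$ defines an order-isomorphism $\ga:\VM\ra\VN$ with $\cP(\ga(V))=\tT(\cP(V))$. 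For each $V$, the complete Boolean algebra isomorphism $\tT|_{\cP(V)}:\cP(V)\ra\cP(\ga(V))$ extends, by the duality for abelian von Neumann algebras (equivalently Stone duality for their hyperstonean spectra), to a \emph{unique} normal $*$-isomorphism $T_V:V\ra\ga(V)$ --- uniqueness because a $*$-homomorphism is norm-continuous and self-adjoint elements with finite spectrum are norm-dense, so $T_V$ is determined by its values on projections. If $V\subseteq V'$, then $T_{V'}|_V$ and $T_V$ are normal $*$-isomorphisms $V\ra\ga(V)$ agreeing on $\cP(V)$, hence equal, so $(T_V)_{V\in\VM}$ is a compatible family.

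Finally I would glue: set $T(A):=T_V(A)$ for any abelian von Neumann subalgebra $V\ni A$ (e.g.\ $V=\{A,A^*,1\}''$, abelian as $A$ is normal), well-defined by compatibility; any two commuting normal elements lie in a common such $V$, so $T$ is unital, satisfies~(1)--(3), and is normal on commuting subsets, with inverse built the same way from $\tT^{-1}$ --- hence an isomorphism in $\VNpart$. Taking $V=\{P,1\}''$ gives $T(P)=\tT(P)$, and conversely any partial isomorphism extending $\tT$ must, by the uniqueness above, restrict on each $V$ to the constructed $T_V$; thus the two assignments are mutually inverse, giving the bijective correspondence. The closing statement is then immediate: $\cM_{part}\cong\cN_{part}$ iff a partial isomorphism exists iff (by the correspondence) $\PM\cong\PN$ in $\cOML$. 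The main obstacle is the fibrewise extension step together with the compatibility check in the third paragraph --- promoting a lattice-level isomorphism to a genuine family of normal $*$-isomorphisms that fit together across the poset; everything else is routine bookkeeping.
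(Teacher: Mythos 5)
Your proposal is correct, and its forward direction (restricting $T$ to $\PM$, checking that projections map to projections, that order is preserved and reflected, that orthocomplements are preserved, and deducing preservation of arbitrary joins and meets from the fact that an order-isomorphism of complete lattices preserves them) is in substance the same as the paper's. The converse direction, however, takes a genuinely different route. The paper extends $\tT$ by hand: it defines $T$ on finite real-linear combinations of pairwise orthogonal projections, passes to norm limits to reach all self-adjoint elements, complexifies to normal elements, and verifies additivity and multiplicativity on commuting pairs by writing them over a common orthogonal family; this is elementary and spectral-theoretic, though it leaves some bookkeeping implicit (independence of the approximating sequences, isometry on simple elements, passage of multiplicativity through the limit). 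You instead route through the correspondence $V\mapsto\cP(V)$ between abelian von Neumann subalgebras and complete Boolean subalgebras of commuting projections, use Gelfand/Stone duality for abelian von Neumann algebras to extend $\tT|_{\cP(V)}$ to a unique normal $*$-isomorphism $T_V:V\ra\ga(V)$, check compatibility along inclusions via norm-density of the span of projections, and glue. This buys you well-definedness and uniqueness essentially for free, yields the base map $\ga:\VM\ra\VN$ and the coherent family $(T_V)$ as by-products (exactly the data the paper needs later, e.g.\ in Lemma \ref{Lem_PartialMorGivesNatIso} and Thm.\ \ref{Thm_VNAs}), and your uniqueness argument makes the bijectivity of the correspondence more explicit than the paper's ``inverse to each other by construction''. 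The price is that the real content of the converse now sits in the two classical facts you invoke: that $\cP(B'')=B$ for a Boolean subalgebra $B$ of pairwise commuting projections closed under ambient suprema (so that $\ga$ is a well-defined order-isomorphism and the inverse construction from $\tT^{-1}$ undoes yours), and that a complete Boolean isomorphism of projection lattices of abelian von Neumann algebras extends (uniquely) to a $*$-isomorphism; both are standard but nontrivial and should be cited or proved, since they replace precisely the hands-on extension the paper carries out. Also, your gluing step tacitly uses Fuglede's theorem to place two commuting normal elements (and a commuting subset, for normality of $T$) inside a common abelian von Neumann subalgebra; this is harmless, and is implicit in the paper's framework as well, but deserves a word.
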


\begin{proof}
Let $T:\cM_{part}\ra \cN_{part}$ be an isomorphism of partial von Neumann algebras. Then, for all projections $\hP\in\PM\subset \cM_{part}$,
\begin{equation}
			T(\hP)=T(\hP^2)=T(\hP)^2,\qquad T(\hP)=T(\hP^*)=T(\hP)^*,
\end{equation}
so $T(\hP)$ is a projection, and $T:\PM\ra\PN$ is a bijection. If $\hP\leq\hQ$, then
\begin{equation}
			T(\hP)=T(\hP\hQ)=T(\hP)T(\hQ),
\end{equation}
so $T(\hP)\leq T(\hQ)$, that is, $T$ preserves the order. Let $T^{-1}:\cN_{part}\ra\cM_{part}$ be the inverse map of $T$ (which is an isomorphism of partial von Neumann algebras, too). Then, if $\hP,\,\hQ\in\PN$ such that $\hP\leq\hQ$,
\begin{equation}
			T^{-1}(\hP)=T^{-1}(\hP\hQ)=T^{-1}(\hP)T^{-1}(\hQ),
\end{equation}
so $T^{-1}(\hP)\leq T^{-1}(\hQ)$, and $T$ reflects the order. Moreover, for all $\hP\in\PM$,
\begin{equation}
			T(\hat 1-\hP)=T(\hat 1)-T(\hP)=\hat 1-T(\hP),
\end{equation}
so $T$ preserves complements. Let $(\hP_i)_{i\in I}\subseteq\PM$ be a family of projections, not necessarily commuting. Then
\begin{equation}
			\forall i\in I: T(\hP_i)\leq T(\bjoin_{i\in I}\hP_i),
\end{equation}
so
\begin{equation}			\label{Ineq_SimpleJoins}
			\bjoin_{i\in I}T(\hP_i)\leq T(\bjoin_{i\in I}\hP_i).
\end{equation}
Note that here, we use joins in $\PM$ respectively $\PN$ between not necessarily commuting projections, so we employ the lattice structure of $\PM$ respectively $\PN$ in our argument, and not just the partial algebra structures of $\cM_{part}$ respectively $\cN_{part}$. Since $T^{-1}$ also preserves and reflects the order, \eq{Ineq_SimpleJoins} is equivalent to
\begin{equation}
			T^{-1}(\bjoin_{i\in I}T(\hP_i)) \leq \bjoin_{i\in I}\hP_i.
\end{equation}
For the left hand side, we have
\begin{equation}
			T^{-1}(\bjoin_{i\in I}T(\hP_i)) \geq \bjoin_{i\in I}T^{-1}(T(\hP_i)) = \bjoin_{i\in I}\hP_i,
\end{equation}
because $T^{-1}$ preserves the order. Hence,
\begin{align}
			&T^{-1}(\bjoin_{i\in I}T(\hP_i)) = \bjoin_{i\in I}\hP_i\\
			\Longleftrightarrow\; &\bjoin_{i\in I}T(\hP_i) = T(\bjoin_{i\in I}\hP_i),
\end{align}
so $T$ preserves all joins. In a completely analogous fashion, one shows that
\begin{align}
			\bmeet_{i\in I}T(\hP_i)=T(\bmeet_{i\in I}\hP_i),
\end{align}
that is, $T$ also preserves all meets and hence 
\begin{equation}
			\tT:=T|_{\PM}
\end{equation}
is an isomorphism from the complete orthomodular lattice $\PM$ to the complete orthomodular lattice $\PN$. 

Conversely, given an isomorphism $\tT:\PM\ra\PN$ of the projection lattices, define a partial automorphism $T:\cM_{part}\ra\cN_{part}$ in the obvious way: for self-adjoint operators $\hA$ that are finite real-linear combinations of projections, we have
\begin{equation}
			T(\hA)=T(\sum_{i=1}^n a_i\hP_i):=\sum_{i=1}^n a_i T(\hP_i).
\end{equation}
An arbitrary self-adjoint $\hA$ can be approximated in norm by a family $(\hA_i)_{i\in\bbN}$ of self-adjoint operators $\hA_i$ that are finite real-linear combinations of projections, so
\begin{equation}
			T(\hA):=\lim_{i\ra\infty}T(\hA_i),
\end{equation}
where the limit is taken in the norm topology. For a non-self-adjoint normal operator $\hB\in \cN_{part}$, use the decomposition $\hB=\hA_1+i\hA_2$ into self-adjoint operators and let
\begin{equation}
			T(\hB):=T(\hA_1)+iT(\hA_2).
\end{equation}
It remains to show that this $T$ is an isomorphism from the partial von Neumann algebra $\cM_{part}$ to the partial von Neumann algebra $\cN_{part}$, that is, we need to show that $T$ preserves addition and multiplication of commuting normal operators. (Preservation of the unit element and preservation of the involution are obvious.)

Let $\hA,\,\hB\in\cM_{part}$ be commuting operators which both are finite real-linear combinations of projections, that is
\begin{equation}
			\hA=\sum_{i=1}^n a_i\hP_i,\qquad \hB=\sum_{i=1}^n b_i\hP_i.
\end{equation}
Note that the projections $(\hP_i)_{i=1,...,n}$ are pairwise orthogonal, and that $\hA$ and $\hB$ can be written as linear combinations of the \emph{same} projections. We have $\hA+\hB=\sum_{i=1}^n (a_i+b_i)\hP_i$, so
\begin{align}
			T(\hA+\hB) &= \sum_{i=1}^n (a_i+b_i)\tT(\hP_i)\\
			&= \sum_{i=1}^n a_i\tT(\hP_i) + \sum_{i=1}^n b_i\tT(\hP_i)\\
			&= T(\hA)+T(\hB).
\end{align}
Moreover, $\hA\hB=\sum_{i=1}^n a_i b_i \hP_i$, so
\begin{align}
			T(\hA\hB)=\sum_i a_i b_i \tT(\hP_i).
\end{align}
On the other hand,
\begin{align}
			T(\hA)T(\hB) &= \sum_i a_i\tT(\hP_i) \sum_j b_j\tT(\hP_j)\\
			&= \sum_i a_i (\tT(\hP_i)\sum_j b_j\tT(\hP_j))\\
			&= \sum_i a_i \delta_{ij} b_j \tT(\hP_j)\\
			&= \sum_i a_i b_i \tT(\hP_i),
\end{align}
so $T(\hA\hB)=T(\hA)T(\hB)$. By continuity in the norm-topology, $T$ can be extended to act as a partial automorphism on all self-adjoint operators in $\cM_{part}$, and by linearity to all normal operators. Hence, $T:\cM_{part}\ra\cN_{part}$ is an isomorphism of partial von Neumann algebras.

The two maps $T\mt\tT$ and $\tT\mt T$ are inverse to each other by construction, so there is a bijection between isomorphisms $\tT:\PM\ra\PN$ in $\cOML$, the category of complete orthomodular lattices, and isomorphisms $T:\cM_{part}\ra\cN_{part}$ in $\VNpart$, the category of partial von Neumann algebras.
\end{proof}

\begin{corollary}			\label{Cor_GroupIsoPartAutCOMLAut}
Let $\cN$ be a von Neumann algebra, and let $\cN_{part}$ be the corresponding partial von Neumann algebra. There is a group isomorphism
\begin{equation}
			B:\Aut_{cOML}(\PN) \lra \Aut_{part}(\cN_{part})
\end{equation}
between the group $\Aut_{cOML}(\PN)$ of automorphisms $\tT:\PN\ra\PN$ of the complete orthomodular lattice $\PN$ of projections in $\cN$ and the group $\Aut_{part}(\cN_{part})$ of automorphisms $T:\cN_{part}\ra \cN_{part}$ of the partial von Neumann algebra $\cN_{part}$.
\end{corollary}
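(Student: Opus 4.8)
The plan is to obtain this as an immediate consequence of Proposition~\ref{Prop_PartVNIsomsAreProjLattIsoms}, specialised to $\cM=\cN$ and restricted from isomorphisms to automorphisms, together with a routine check that the resulting bijection is compatible with composition.

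First I would invoke Proposition~\ref{Prop_PartVNIsomsAreProjLattIsoms} with $\cM=\cN$: it provides two mutually inverse assignments, one sending an automorphism $T:\cN_{part}\ra\cN_{part}$ of the partial von Neumann algebra to its restriction $T|_{\PN}$, which is an automorphism of the complete orthomodular lattice $\PN$, and the other sending an automorphism $\tT:\PN\ra\PN$ to the extension $T$ built from $\tT$ by real-linear extension over finite real-linear combinations of projections, then by norm-limits to all self-adjoint elements, and finally by the decomposition $\hB=\hA_1+i\hA_2$ to all normal elements. I would define $B(\tT):=T$ to be this extension; the fact that the two assignments are inverse to one another then immediately gives that $B$ is a bijection with inverse $B^{-1}(T)=T|_{\PN}$.

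Next I would verify that $B$ is a group homomorphism. Since, as shown at the beginning of the proof of Proposition~\ref{Prop_PartVNIsomsAreProjLattIsoms}, every automorphism $T$ of $\cN_{part}$ carries $\PN$ into $\PN$, the restriction map satisfies $(T_1\circ T_2)|_{\PN}=T_1|_{\PN}\circ T_2|_{\PN}$ and $\id_{\cN_{part}}|_{\PN}=\id_{\PN}$; hence $B^{-1}$ is a homomorphism of groups, and therefore so is $B$. Equivalently: given $\tT_1,\tT_2\in\Aut_{cOML}(\PN)$, the composite $B(\tT_1)\circ B(\tT_2)$ again lies in $\Aut_{part}(\cN_{part})$ (the group structure was noted in Definition~\ref{Def_PartialVNA}) and restricts on $\PN$ to $\tT_1\circ\tT_2$, so by uniqueness of the extension it must coincide with $B(\tT_1\circ\tT_2)$. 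Note that the resulting isomorphism is covariant, in contrast to the contravariant isomorphism of Corollary~\ref{Cor_ContravGroupIsoForVNAs}.

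I do not anticipate a genuine obstacle here: everything substantive --- in particular that an extension $T$ is uniquely determined by its restriction $\tT=T|_{\PN}$, which is exactly what makes the homomorphism argument work --- is already contained in Proposition~\ref{Prop_PartVNIsomsAreProjLattIsoms}, and the only remaining work is the bookkeeping described above.
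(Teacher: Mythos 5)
Your proposal is correct and follows essentially the same route as the paper, which states this corollary as an immediate consequence of Proposition~\ref{Prop_PartVNIsomsAreProjLattIsoms} (specialised to $\cM=\cN$) without further argument; your explicit check that restriction, and hence its inverse, the extension map $B$, respects composition is exactly the routine bookkeeping the paper leaves implicit. Your remark that $B$ is covariant, unlike the contravariant isomorphism of Corollary~\ref{Cor_ContravGroupIsoForVNAs}, is also accurate.
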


To each von Neumann algebra $\cN$, we can associate a Jordan algebra, also denoted $\cN$: the Jordan algebra has the same elements and linear structure as the von Neumann algebra $\cN$ and is equipped with the \emph{Jordan product}
\begin{equation}
			\forall\hA,\hB\in\cN: \hA\cdot\hB=\frac{1}{2}(\hA\hB+\hB\hA).
\end{equation}
A unital Jordan $*$-morphism preserves the unit element, involution, linear structure and Jordan product. Jordan $*$-isomorphisms are necessarily unital.

Since a von Neumann algebra is weakly closed, the associated Jordan algebra will also be weakly closed, and hence is a \emph{$JBW$-algebra} (where the acronym stands for \emph{J}ordan-\emph{B}anach-\emph{W}eakly closed). For the theory of $JBW$-algebras, see \cite{AlfShu03} and references therein. There is a category $\JBW$ of complex, unital $JBW$-algebras and unital, normal Jordan $*$-morphisms. It is easy to check that each unital normal morphism $\phi:\cM\ra\cN$ of von Neumann algebras induces a unital normal Jordan $*$-morphism of the associated $JBW$-algebras, so there is a functor
\begin{align}
			\mc J: \vNa &\lra \JBW\\			\nonumber
			\cN &\lmt (\cN,\cdot)
\end{align}
sending each von Neumann algebra $\cN$ to the corresponding $JBW$-algebra $\cN$ with Jordan product $\cdot$, and each unital morphism $\phi:\cM\ra\cN$ to the corresponding morphism of $JBW$-algebras. The functor $\mc J$ is not injective on objects: a von Neumann algebra $\cN$ and its opposite algebra $\cN^o$, with the order of multiplication reversed, are mapped to the same $JBW$-algebra $(\cN,\cdot)$. More crucially, the functor $\mc J$ maps certain non-isomorphic von Neumann algebras to the same $JBW$-algebra : as Connes showed in \cite{Con75}, there is a factor $\cN$ that is not isomorphic to its opposite algebra $\cN^o$.

In the following, we will need a theorem by Dye \cite{Dye55}, see also \cite{Ham03}, Thm. 8.1.1:
\begin{theorem}			\label{Thm_Dye}
(Dye) Let $\cM,\,\cN$ be von Neumann algebras without type $I_2$ summands. Every isomorphism $\tT:\PM\ra\PN$ between the complete orthomodular lattices of projections induces a Jordan $*$-isomorphism $T:\cM\ra\cN$ such that $T(\hP)=\tT(\hP)$ for all $\hP\in\PM$. Conversely, every Jordan $*$-isomorphism $T:\cM\ra\cN$ restricts to an isomorphism $\tT:=T|_{\PM}:\PM\ra\PN$. (Clearly, the two maps $\tT\mt T$ and $T\mt\tT$ are inverse to each other.)
\end{theorem}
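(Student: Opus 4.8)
The converse direction is routine: if $T:\cM\ra\cN$ is a Jordan $*$-isomorphism and $\hP\in\PM$, then $T(\hP)=T(\hP\cdot\hP)=T(\hP)\cdot T(\hP)=T(\hP)^2$ and $T(\hP)=T(\hP^*)=T(\hP)^*$, so $T(\hP)$ is a projection; $T$ and $T^{-1}$ are positive, hence order-preserving, so $\tT:=T|_{\PM}$ is an order-isomorphism $\PM\ra\PN$; and $T(\hat 1-\hP)=\hat 1-T(\hP)$ shows $\tT$ preserves orthocomplementation. Hence $\tT$ is an isomorphism of complete orthomodular lattices. For the forward direction I would proceed in four steps: (i) $\tT$ is orthoadditive; (ii) $\tT$ extends canonically to a $*$-preserving unital map $T:\cM\ra\cN$; (iii) $T$ is linear; (iv) $T$ is Jordan multiplicative. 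Only step (iii) is substantial.

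Step (i): since $\hP\perp\hQ$ iff $\hP\leq\hat 1-\hQ$, and $\tT$ preserves the order and orthocomplementation, $\tT$ preserves orthogonality; and as an order-isomorphism between complete lattices $\tT$ preserves arbitrary joins, so for any pairwise orthogonal family $(\hP_i)_{i\in I}$ one has $\tT(\sum_{i\in I}\hP_i)=\bjoin_{i\in I}\tT(\hP_i)=\sum_{i\in I}\tT(\hP_i)$ (a join of pairwise orthogonal projections is their sum). In particular $\hP\mapsto\tT(\hP)$ is a bounded (norm $\leq 1$), finitely additive $\cN$-valued function on $\PM$. Step (ii): for a self-adjoint $\hA\in\cM$ with finite spectrum, write $\hA=\sum_{i=1}^n a_i\hP_i$ with the $\hP_i$ its spectral projections and set $T(\hA):=\sum_{i=1}^n a_i\tT(\hP_i)$; by step (i) this does not depend on the chosen orthogonal representation of $\hA$, and since the $\tT(\hP_i)$ are pairwise orthogonal and nonzero, $\|T(\hA)\|=\max_i|a_i|=\|\hA\|$. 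Such $\hA$ are norm-dense in $\cM_{\sa}$, so $T$ extends uniquely to an isometry $\cM_{\sa}\ra\cN_{\sa}$ and then, via $\hA=\hA_1+i\hA_2$, to a $*$-preserving unital map $T:\cM\ra\cN$ (unital since $\tT(\hat 1)=\hat 1$).

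Step (iii) is the crux. Additivity of $T$ on a \emph{commuting} pair of self-adjoint elements is elementary: commuting finite-spectrum self-adjoints diagonalise over a common pairwise orthogonal family $(\hQ_j)$, step (i) gives $T(\sum_j c_j\hQ_j)=\sum_j c_j\tT(\hQ_j)$, and one passes to norm-limits. The difficulty is additivity on \emph{non-commuting} self-adjoints, where there is no joint spectral resolution and the spectral projections of $\hA+\hB$ bear no relation to those of $\hA$ and $\hB$. This is exactly the content of the Mackey--Gleason theorem: a bounded, finitely additive map from the projection lattice of a von Neumann algebra with \emph{no type $I_2$ summand} into a Banach space extends to a unique bounded linear map (Gleason for $\mc B(\cH)$ with $\dim\cH\geq3$; Christensen and Yeadon in the $\sigma$-finite and type $III$ cases; Bunce and Wright in general; see \cite{Ham03}). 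Applied to $\hP\mapsto\tT(\hP)$, this produces a bounded linear $\widehat T:\cM\ra\cN$ which, being additive on orthogonal projections, agrees with the $T$ of step (ii) on finite-spectrum self-adjoints and hence, by norm-density, everywhere; thus $T$ is linear. The hypothesis of no type $I_2$ summand is indispensable here: for $\mc B(\bbC^2)$, the automorphism group of the orthomodular projection lattice consists of \emph{all} antipode-preserving bijections of the Bloch sphere $S^2$, vastly more than the Jordan $*$-automorphisms, so the correspondence asserted in the theorem genuinely fails --- which is precisely why $\mc B(\bbC^2)$ and $\bbC^2$ are excluded throughout this paper.

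Step (iv): for $\hA=\sum_i a_i\hP_i$ with finite spectrum, $T(\hA^2)=T(\sum_i a_i^2\hP_i)=\sum_i a_i^2\tT(\hP_i)=(\sum_i a_i\tT(\hP_i))^2=T(\hA)^2$, the penultimate equality using pairwise orthogonality of the $\tT(\hP_i)$; since squaring and $T$ are norm-continuous, $T(\hA^2)=T(\hA)^2$ for all self-adjoint $\hA$, and polarising via the linearity of step (iii) yields $T(\hA\cdot\hB)=T(\hA)\cdot T(\hB)$ for all $\hA,\hB\in\cM$. Hence $T$ is a unital Jordan $*$-homomorphism; carrying out the same construction for $\tT^{-1}$ produces a two-sided inverse, so $T$ is a Jordan $*$-isomorphism with $T(\hP)=\tT(\hP)$. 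Finally, any Jordan $*$-isomorphism extending $\tT$ must send $\sum_i a_i\hP_i$ to $\sum_i a_i\tT(\hP_i)$ by linearity and so coincides with $T$ by continuity; this establishes the claimed bijective correspondence $\tT\leftrightarrow T$. The single genuine obstacle is step (iii); everything else is bookkeeping. (Dye's original argument avoids the general Mackey--Gleason theorem, handling type $I$ algebras through the comparison theory of projections and reducing types $II$ and $III$ to that case, but the route sketched here is the most transparent in the light of later work.)
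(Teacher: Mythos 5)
The paper offers no proof of this statement at all: it is quoted as Dye's theorem, with a citation to \cite{Dye55} and to \cite{Ham03}, Thm.\ 8.1.1, so there is no internal argument to compare against. What you give is a genuine proof sketch, and it is essentially correct: the route (orthoadditivity of $\tT$, spectral-calculus definition of $T$ on finite-spectrum self-adjoint elements, linearity via the Mackey--Gleason/Bunce--Wright theorem for von Neumann algebras with no type $I_2$ summand, then Jordan multiplicativity by squaring plus polarisation, and bijectivity via the same construction for $\tT^{-1}$) is the standard modern derivation and is in spirit the one behind Hamhalter's treatment, whereas Dye's original 1955 argument proceeds through comparison theory of projections, type by type; your remark on why $\mc B(\bbC^2)$ must be excluded is correct and apt. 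The one step you should tighten is the density extension in step (ii): knowing $\Vert T(\hA)\Vert=\Vert\hA\Vert$ on finite-spectrum self-adjoints does not by itself yield a continuous extension to $\cM_{\sa}$, because $T$ is not yet known to be additive there, and norm-preservation alone gives no modulus of continuity. Either perform the extension inside each abelian von Neumann subalgebra (where, by your commuting-additivity observation, $T$ is linear and isometric on a dense subspace, so limits exist and are mutually consistent), or, cleaner, drop step (ii) altogether and \emph{define} $T$ to be the bounded linear map supplied by the Bunce--Wright theorem, then check that it equals $\sum_i a_i\tT(\hP_i)$ on finite-spectrum elements; the remainder of your argument (steps (i), (iii), (iv) and uniqueness) then goes through verbatim. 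A small bibliographical point: the ``Generalised Gleason Theorem'' quoted in the paper (quasi-linear maps bounded on the unit ball are linear) is a sibling of, not identical to, the finitely-additive vector-measure version you invoke, so the accurate reference for your step (iii) is the Bunce--Wright solution of the Mackey--Gleason problem (or \cite{Ham03}).
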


\begin{corollary}			\label{Cor_DyeOneAlg}
If $\cN$ is a von Neumann algebra without type $I_2$ summand, there is a group isomorphism
\begin{equation}
			C:\Aut_{cOML}(\PN) \lra \Aut_{Jordan}(\cN)
\end{equation}
between the group of automorphisms of the complete orthomodular lattice of projections in $\cN$ and the group of Jordan $*$-automorphisms of $\cN$, seen as a $JBW$-algebra.
\end{corollary}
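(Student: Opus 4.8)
The plan is to read this off Dye's theorem (Thm.~\ref{Thm_Dye}) together with the observation that the bijection it provides between orthomodular-lattice isomorphisms and Jordan $*$-isomorphisms is compatible with composition.

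First I would define the map $C$. Given $\tT\in\Aut_{cOML}(\PN)$, Thm.~\ref{Thm_Dye} applied with $\cM=\cN$ produces a Jordan $*$-isomorphism $T:\cN\ra\cN$ with $T|_{\PN}=\tT$; since $\tT$ is an automorphism of $\PN$, $T$ is an automorphism of the $JBW$-algebra $\cN$, i.e.\ $T\in\Aut_{Jordan}(\cN)$. Set $C(\tT):=T$. In the other direction, define $D:\Aut_{Jordan}(\cN)\ra\Aut_{cOML}(\PN)$ by $D(T):=T|_{\PN}$; the second half of Thm.~\ref{Thm_Dye} guarantees that $T|_{\PN}$ really is an automorphism of the complete orthomodular lattice $\PN$, so $D$ is well defined, and the last sentence of that theorem says precisely that $C$ and $D$ are mutually inverse bijections.

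It then remains to check that one of the two maps, say $D$, is a group homomorphism; the other is then automatically one as well. This is immediate: if $T_1,T_2\in\Aut_{Jordan}(\cN)$, then $T_1\circ T_2$ is again a Jordan $*$-automorphism, and for every $\hP\in\PN$ we have $T_2(\hP)\in\PN$, hence
\begin{equation}
			(T_1\circ T_2)|_{\PN}(\hP) = T_1\big(T_2(\hP)\big) = (T_1|_{\PN})\big((T_2|_{\PN})(\hP)\big),
\end{equation}
so $D(T_1\circ T_2)=D(T_1)\circ D(T_2)$; also $D(\id_{\cN})=\id_{\PN}$ since the identity restricts to the identity. Therefore $C=D^{-1}$ is a group isomorphism, as claimed.

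There is no genuine obstacle here: the statement is a formal consequence of Thm.~\ref{Thm_Dye}. The only point requiring a moment's care is that the correspondence of Dye's theorem respects composition, and, as above, this reduces to the triviality that restricting maps to a subset which each of them preserves commutes with composition. (One may also combine $C$ with the isomorphism $B$ of Cor.~\ref{Cor_GroupIsoPartAutCOMLAut} to obtain, for type $I_2$-free $\cN$, a group isomorphism $\Aut_{part}(\cN_{part})\cong\Aut_{Jordan}(\cN)$, but this is not needed for the proof.)
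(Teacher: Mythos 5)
Your argument is correct and is exactly how the paper treats this statement: Cor.~\ref{Cor_DyeOneAlg} is stated there without proof as an immediate consequence of Dye's theorem (Thm.~\ref{Thm_Dye}) applied with $\cM=\cN$, the bijectivity coming from the final sentence of that theorem. Your only added content is the routine check that restriction to $\PN$ respects composition and identities, which is precisely the detail the paper leaves implicit.
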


Dye's result shows that if $\tT:\PM\ra\PN$ is an isomorphism of complete OMLs and $T:\cM\ra\cN$ is the associated Jordan $*$-automorphism, then $T|_{\PN}=\tT$ (that is, $T$ is an extension of $\tT$). The partial von Neumann algebra $\cM_{part}$ can be seen as `part' of the Jordan algebra $\cM$, and the restriction of the Jordan $*$-morphism $T$ to $\cM_{part}$ is the isomorphism of partial von Neumann algebras constructed in the proof of Prop. \ref{Prop_PartVNIsomsAreProjLattIsoms}: for this, note that on commuting operators $\hA,\,\hB\in\cM$, the Jordan product coincides with the product coming from $\cM$,
\begin{equation}
			\hA\hB=\hB\hA \quad\Longleftrightarrow\quad \hA\cdot\hB=\frac{1}{2}(\hA\hB+\hB\hA)=\hA\hB.
\end{equation}

For von Neumann algebras, there is a result connecting base maps $\ga:\VM\ra\VN$ between context categories and Jordan $*$-isomorphisms from $\cM$ to $\cN$. As was shown in \cite{HarDoe10},

\begin{theorem}			\label{Thm_OrderIsoGivesJordanAutom}
Let $\cM,\,\cN$ be von Neumann algebras not isomorphic to $\bbC^2=\bbC\oplus\bbC$ and without type $I_2$ summands, and let $\VM,\,\VN$ be their context categories, that is, the set of abelian von Neumann subalgebras of $\cM$ respectively $\cN$, each partially ordered under inclusion. For every order-isomorphism $\ga:\VM\ra\VN$, there exists a Jordan $*$-isomorphism $T:\cM\ra\cN$ such that $T[W]=\ga(W)$ for all $W\in\VM$. Conversely, every Jordan $*$-isomorphism $T:\cM\ra\cN$ induces an order-isomorphism $\ga:\VM\ra\VN$ such that $T[W]=\ga(W)$ for all $W\in\VM$.
\end{theorem}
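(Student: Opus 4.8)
The plan is to reduce the statement to Dye's theorem (Thm.~\ref{Thm_Dye}) by proving that the \emph{order} structure of the poset $\VN$ already determines the complete orthomodular lattice $\PN$ of projections of $\cN$, and does so naturally enough that an order-isomorphism $\ga\colon\VM\to\VN$ produces an orthomodular-lattice isomorphism $\tT\colon\PM\to\PN$. Dye then upgrades $\tT$ to a Jordan $*$-isomorphism $T\colon\cM\to\cN$ with $T|_{\PM}=\tT$, and a short argument shows $T[W]=\ga(W)$ for all $W\in\VM$, giving the forward direction. (This is the route of \cite{HarDoe10}.)

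The converse is the easy half. Given a Jordan $*$-isomorphism $T\colon\cM\to\cN$ and any abelian $W\in\VM$: on $W$ the Jordan product agrees with the associative product, so $T|_W$ is a unital $*$-isomorphism onto its image; since order-isomorphisms of $JBW$-algebras are automatically normal, $T[W]$ is weakly closed, hence $T[W]\in\VN$. As $T$ is a bijection preserving inclusions, $\ga\colon W\mapsto T[W]$ is an order-isomorphism $\VM\to\VN$, and by construction $T[W]=\ga(W)$.

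For the forward direction one first recovers projections up to complementation: the atoms of $\VN$ (the elements covering $\bbC\hat 1$) are exactly the two-dimensional subalgebras $\bbC\hP\oplus\bbC(\hat1-\hP)$ with $\hP\neq\hat0,\hat1$, each determined by the unordered pair $\{\hP,\hat1-\hP\}$, so $\ga$ induces a bijection between complementary pairs of nontrivial projections of $\cM$ and of $\cN$. Next these pairs must be oriented coherently. This is done via finite joins of atoms --- which are precisely the finite-dimensional abelian subalgebras, hence an order-invariant notion --- and in particular three-dimensional abelian subalgebras $\bbC\hP_1\oplus\bbC\hP_2\oplus\bbC\hP_3$: the way such a subalgebra lies above its three atoms and above the further atoms produced by refining it encodes the relations $\hP_i\leq\hP_j$, $\hP_i\hP_j=\hat0$ and $\hP_1+\hP_2+\hP_3=\hat1$, and this lets one transport a globally consistent choice of representatives through $\ga$. (This is exactly the step that collapses when no three-dimensional abelian subalgebra exists, i.e.\ for $\cN\cong\bbC^2$ --- and also for $\cN\cong\mc B(\bbC^2)$, already excluded by the type $I_2$ hypothesis --- where $\VN$ genuinely cannot distinguish $\hP$ from $\hat1-\hP$; when $\cN\cong\bbC$ the claim is trivial.) One then defines $\tT\colon\PM\to\PN$ on nontrivial projections, with $\hat0\mapsto\hat0$, $\hat1\mapsto\hat1$, and checks that it is a bijection preserving and reflecting both the order and the orthocomplementation (both read off from $\VN$ as above), so $\tT$ is an isomorphism of orthomodular lattices; completeness follows since arbitrary joins are then also preserved (alternatively, produce the induced isomorphism of partial von Neumann algebras and invoke Prop.~\ref{Prop_PartVNIsomsAreProjLattIsoms}). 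Applying Thm.~\ref{Thm_Dye} gives a Jordan $*$-isomorphism $T\colon\cM\to\cN$ with $T(\hP)=\tT(\hP)$ for all $\hP\in\PM$. Finally, for $W\in\VM$ one has $W=(\on{span}\,\cP(W))^{-}$ (weak closure), and by construction $\tT[\cP(W)]=\cP(\ga(W))$, so normality of $T$ yields $T[W]=(\on{span}\,\tT[\cP(W)])^{-}=(\on{span}\,\cP(\ga(W)))^{-}=\ga(W)$.

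The main obstacle is the orientation step: making a \emph{globally} coherent choice, for every nontrivial projection $\hP$, of which member of the unordered pair $\{\hP,\hat1-\hP\}$ is ``$\hP$'', using only the order relations in $\VN$, and showing that this choice is respected by an arbitrary order-isomorphism $\ga$. Everything else --- recognising atoms and finite-dimensional subalgebras, recovering the order on projections, and the reduction to Dye --- is comparatively routine; the orientation argument is where the real content of \cite{HarDoe10} lies, and it is precisely why $\bbC^2$ must be excluded.
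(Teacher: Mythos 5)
Your skeleton is the same as the paper's: reduce the forward direction to an isomorphism $\tT:\PM\ra\PN$ of projection lattices and invoke Dye's theorem (Thm.~\ref{Thm_Dye}), and get the converse by setting $\ga(W):=T[W]$, using that a Jordan $*$-isomorphism preserves commutativity, restricts to a $*$-isomorphism on each abelian subalgebra, and is automatically normal, so that $T[W]\in\VN$. Those parts are fine, as is the closing verification $T[W]=\ga(W)$ from $W=(\on{span}\,\cP(W))^-$ together with normality of $T$, and the fallback to Prop.~\ref{Prop_PartVNIsomsAreProjLattIsoms} for completeness of $\tT$.

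The gap is exactly the step you flag yourself: constructing $\tT$ from $\ga$. Identifying the atoms of $\VN$ with unordered pairs $\{\hP,\hat1-\hP\}$ is the easy part; the entire content of the forward direction is the globally coherent orientation of these pairs and the proof that the resulting map preserves and reflects order and orthocomplementation. Your sketch (``the way a three-dimensional subalgebra lies above its atoms \dots lets one transport a globally consistent choice of representatives through $\ga$'') asserts precisely what has to be proved, and you then concede it is the main obstacle without carrying it out, so as written the heart of the implication is missing. The paper does not do this step by hand either: it is the content of the Harding--Navara theorem on posets of Boolean subalgebras of orthomodular lattices without $4$-element blocks \cite{HarNav11}, adapted in \cite{HarDoe10} to the complete Boolean subalgebras of $\PN$ (this is where the exclusion of $\bbC^2$ and $\mc B(\bbC^2)$ enters, the latter already removed by the type $I_2$ hypothesis, while the absence of type $I_2$ summands is what Dye's theorem itself requires---you correctly separate these roles). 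So either cite that result, as the paper and \cite{HarDoe10} do, or supply the full combinatorial argument; note also that your phrase ``finite joins of atoms'' needs care, since $\VN$ only has joins of families lying in a common abelian subalgebra, although finite-dimensionality is indeed order-definable via finite height.
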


Hence, two von Neumann algebras $\cM,\,\cN$ not isomorphic to $\bbC^2$ and without type $I_2$ summands are Jordan $*$-isomorphic if and only if their context categories $\VM,\,\VN$ are order-isomorphic.

The proof proceeds in two main steps: in the first step, using a result by Harding and Navara \cite{HarNav11}, one shows that every order-isomorphism (base map) $\ga:\VM\ra\VN$ determines a unique isomorphism $T:\PM\ra\PN$ of the projection lattices and vice versa. For this to apply, $\PM$ must not have $4$-element blocks (maximal Boolean sublattices), which is why we have to exclude the cases $\cM\sim\bbC^2$ and $\cM\sim\mc B(\bbC^2)$. (The latter is a type $I_2$ von Neumann algebra.) The second step is to apply the theorem by Dye (Thm. \ref{Thm_Dye}) cited above, which shows that $T:\PM\ra\PN$ extends to a Jordan $*$-automorphism $T:\cM\ra\cN$. Conversely, every Jordan $*$-automorphism $T:\cM\ra\cN$ determines a unique order-isomorphism $\ga:\VM\ra\VN$ of the context categories by $\ga(W):=T[W]$ for all $W\in\VM$. The two maps $\ga\mt T$ and $T\mt\ga$ are inverse to each other by construction.

\begin{corollary}			\label{Cor_OrderIsoGivesJordanAutom}
Let $\cN$ be a von Neumann algebra not isomorphic to $\bbC^2$ and without type $I_2$ summand, and let $\VN$ be its context category. There is a group isomorphism
\begin{equation}
			D:\Aut_{ord}(\VN) \lra \Aut_{Jordan}(\cN)
\end{equation}
between the group $\Aut_{ord}(\VN)$ of order-automorphisms	$\ga:\VN\ra\VN$ and the group $\Aut_{Jordan}(\cN)$ of Jordan $*$-automorphisms $T:\cN \lra \cN$.
\end{corollary}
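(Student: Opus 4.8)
The plan is to read off this group isomorphism directly from Theorem \ref{Thm_OrderIsoGivesJordanAutom} applied with $\cM=\cN$, and then to check the single remaining point, namely that the resulting bijection respects composition.

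First I would make precise what that theorem (together with the remark following it) gives us: the assignment sending a Jordan $*$-automorphism $T$ of $\cN$ to the order-automorphism $\ga_T$ of $\VN$ defined by $\ga_T(W):=T[W]$ for all $W\in\VN$ is a well-defined map $\Aut_{Jordan}(\cN)\to\Aut_{ord}(\VN)$, and it admits a two-sided inverse; call that inverse $D$. Thus, for each $\ga\in\Aut_{ord}(\VN)$, the Jordan $*$-automorphism $D(\ga)$ is characterised by $D(\ga)[W]=\ga(W)$ for every $W\in\VN$, and it is the \emph{unique} Jordan $*$-automorphism with this property. This uses the standing hypotheses that $\cN$ is not isomorphic to $\bbC^2$ and has no type $I_2$ summand (hence, in particular, is not isomorphic to $\mc B(\bbC^2)$), exactly as in the two-step proof recalled after Thm. \ref{Thm_OrderIsoGivesJordanAutom}. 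That $\Aut_{ord}(\VN)$ and $\Aut_{Jordan}(\cN)$ are groups under composition is immediate.

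Next I would verify that $D$ is a group homomorphism. Let $\ga_1,\ga_2\in\Aut_{ord}(\VN)$ and set $T_i:=D(\ga_i)$. Since each $T_i$ is a Jordan $*$-automorphism of $\cN$, so is $T_1\circ T_2$, and for every $W\in\VN$,
\[
(T_1\circ T_2)[W]=T_1\bigl[T_2[W]\bigr]=T_1\bigl[\ga_2(W)\bigr]=\ga_1\bigl(\ga_2(W)\bigr)=(\ga_1\circ\ga_2)(W).
\]
By the uniqueness noted above, this forces $T_1\circ T_2=D(\ga_1\circ\ga_2)$, i.e. $D(\ga_1\circ\ga_2)=D(\ga_1)\circ D(\ga_2)$. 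Moreover $\id_{\cN}[W]=W=\id_{\VN}(W)$ for all $W$, so $D(\id_{\VN})=\id_{\cN}$. Hence $D$ is a homomorphism of groups, and being a bijection it is a group isomorphism, as claimed.

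This argument is purely formal once Thm. \ref{Thm_OrderIsoGivesJordanAutom} is available; the only subtlety is that one must invoke the \emph{uniqueness} of the Jordan $*$-automorphism attached to a given base map, which is legitimate precisely because the correspondence $\ga\leftrightarrow T$ there is a genuine bijection rather than merely a surjection. I do not expect any real obstacle here: all the substantive content -- passing from a base map to a projection-lattice isomorphism via Harding--Navara, then extending that to a Jordan $*$-isomorphism via Dye's theorem -- is already encapsulated in the cited result, and the present corollary is the exact analogue of Cor. \ref{Cor_DyeOneAlg} and Cor. \ref{Cor_GroupIsoPartAutCOMLAut} one level up.
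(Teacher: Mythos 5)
Your proposal is correct and follows the paper's (implicit) route: the paper states this corollary as an immediate consequence of Thm.~\ref{Thm_OrderIsoGivesJordanAutom} with $\cM=\cN$, and you simply make explicit the routine verification that the resulting bijection respects composition and identities, correctly resting the composition check on the uniqueness of the Jordan $*$-automorphism inducing a given base map, which is indeed available because the correspondence $\ga\leftrightarrow T$ in that theorem is a two-sided bijection. No gap.
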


Taken together, the results in this subsection imply:

\begin{theorem}			\label{Thm_VNAs}
Let $\cM,\,\cN$ be von Neumann algebras without type $I_2$ summands, with associated $JBW$-algebras also denoted $\cM,\,\cN$, spectral presheaves $\SigM$, $\SigN$, context categories $\VM,\,\VN$, partial von Neumann algebras $\cM_{part}$, $\cN_{part}$, and projection lattices $\PM,\,\PN$, respectively. Every isomorphism $\SigN\ra\SigM$ of the spectral presheaves induces a unique Jordan $*$-isomorphism $T:\cM\ra\cN$ (in the opposite direction). Conversely, every Jordan $*$-isomorphism $T:\cM\ra\cN$ induces a unique isomorphism $\pair{\Ga}{\io}:\SigN\ra\SigM$ of the spectral presheaves (in the opposite direction). Moreover, every Jordan $*$-isomorphism $T:\cM\ra\cN$ gives an isomorphism $\pair{\tilde\Ga}{\kappa}:\fM\ra\fN$ in the category of unital commutative $C^*$-algebra-valued presheaves from the Bohrification of $\cM$ to the Bohrification of $\cN$ and vice versa, and gives an isomorphism $T:\cM_{part}\ra\cN_{part}$ of partial von Neumann algebras and vice versa, which in turn gives an isomorphism $\tT:\PM\ra\PN$ of complete orthomodular lattices and vice versa, which, if $\cM\nsim\bbC^2$, induces an order-isomorphism $\ga:\VM\ra\VN$ and vice versa. Here, $\ga$ is the base map underlying the essential geometric morphism $\pair{\Ga}{\io}:\SigN\ra\SigM$ corresponding to the Jordan $*$-isomorphism $T:\cM\ra\cN$.
\end{theorem}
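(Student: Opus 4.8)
The plan is to obtain the theorem purely by concatenating the bijective correspondences already in place and then reading off the uniqueness and compatibility assertions. The chain I would use runs as follows: by Prop.~\ref{Prop_IsomsOfSpecPreshsAreJordanIsoms}, isomorphisms $\pair{\Ga}{\io}\colon\SigN\to\SigM$ of spectral presheaves correspond bijectively to isomorphisms $T\colon\cM_{part}\to\cN_{part}$ of partial von Neumann algebras; by Prop.~\ref{Prop_PartVNIsomsAreProjLattIsoms}, these correspond bijectively to isomorphisms $\tT\colon\PM\to\PN$ of complete orthomodular lattices; since neither $\cM$ nor $\cN$ has a type $I_2$ summand, Dye's theorem (Thm.~\ref{Thm_Dye}) gives a bijection between such $\tT$ and Jordan $*$-isomorphisms $\cM\to\cN$ (each $\tT$ extending uniquely to a Jordan $*$-isomorphism $T$, and conversely $T|_{\PM}=\tT$); and, when in addition $\cM\nsim\bbC^2$, Thm.~\ref{Thm_OrderIsoGivesJordanAutom} gives a bijection between Jordan $*$-isomorphisms $\cM\to\cN$ and order-isomorphisms $\ga\colon\VM\to\VN$, with $T[W]=\ga(W)$. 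In parallel, Cor.~\ref{Cor_AutSigNContravIsomToAutofN} together with Rem.~\ref{Rem_fAIsPartialAlg} (which identifies the Bohrification $\fM$ with the external partial von Neumann algebra $\cM_{part}$) identifies the isomorphisms $\pair{\Ga}{\io}\colon\SigN\to\SigM$ with isomorphisms $\pair{\tilde\Ga}{\kappa}\colon\fM\to\fN$ of Bohrifications in the opposite direction. Composing all of this yields the asserted correspondences; every ``induces a unique'' clause is then just the statement that a composite of bijections is a bijection, and the $JBW$-algebra assertions are Dye's theorem applied to the $\tT$ produced along the way.

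What remains is to justify the compatibility clause -- that the order-isomorphism $\ga$ sitting at the far end of the chain is precisely the base map underlying $\pair{\Ga}{\io}$ -- and the sharper uniqueness remark made after Def.~\ref{Def_IsomFromSigNToSigM}, that $\io$ is determined by $\ga$. For the first, I would track the context-assignment along each link: starting from $\pair{\Ga}{\io}$ with base map $\ga\colon\VM\to\VN$, one checks (as in the construction underlying Prop.~\ref{Prop_IsomsOfSpecPreshsAreJordanIsoms}, via Gelfand duality applied to each component $\io_W\colon\SigN_{\ga(W)}\to\SigM_W$) that the partial algebra isomorphism $T$ satisfies $T(W)=\ga(W)$ for every $W\in\VM$; since each $W$ is generated by its projections, the orthomodular-lattice isomorphism $\tT=T|_{\PM}$ and the Jordan $*$-isomorphism extending it (via Dye) also satisfy $T[W]=\ga(W)$; and Thm.~\ref{Thm_OrderIsoGivesJordanAutom} then returns exactly this $\ga$. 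Hence travelling down the chain from $\pair{\Ga}{\io}$ and back recovers the original base map, which is the assertion.

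For the uniqueness of $\io$ given $\ga$ (in the case $\cM\nsim\bbC^2$): run $\ga$ up the chain. Thm.~\ref{Thm_OrderIsoGivesJordanAutom} gives a Jordan $*$-isomorphism $T\colon\cM\to\cN$ with $T[W]=\ga(W)$; Dye's theorem and Prop.~\ref{Prop_PartVNIsomsAreProjLattIsoms} turn its restriction into a partial algebra isomorphism $T\colon\cM_{part}\to\cN_{part}$; for each $W\in\VM$ the map $T|_W\colon W\to\ga(W)$ is a unital $*$-isomorphism of abelian $C^*$-algebras, because on the commutative algebra $W$ the Jordan product coincides with the ordinary product, so by Gelfand duality it yields a homeomorphism $\io_W\colon\SigN_{\ga(W)}\to\SigM_W$, and naturality of the family $(\io_W)_{W\in\VM}$ is the commuting-square computation already carried out in Section~\ref{Sec_MorsAndMaps}. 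This produces at least one isomorphism $\pair{\Ga}{\io}$ with base map $\ga$. For uniqueness, any other $\pair{\Ga}{\io'}$ with the same base map maps, under Prop.~\ref{Prop_IsomsOfSpecPreshsAreJordanIsoms} and Prop.~\ref{Prop_PartVNIsomsAreProjLattIsoms}, to an orthomodular-lattice isomorphism $\tT'\colon\PM\to\PN$; but $\tT'$ is forced to equal $\tT$ because the base map determines the lattice isomorphism (this is the Harding--Navara input inside Thm.~\ref{Thm_OrderIsoGivesJordanAutom}), whence $\io'=\io$ by injectivity of the Prop.~\ref{Prop_IsomsOfSpecPreshsAreJordanIsoms} correspondence.

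The genuine work, and where I expect to spend most care, is not new mathematics but the bookkeeping of directions and variances: the geometric morphisms, the natural transformations, and the algebra maps each reverse direction relative to one another (cf.\ Section~\ref{Sec_MorsAndMaps} and the discussion after Prop.~\ref{Prop_DualEquiv}), so one must verify that ``$\SigN\to\SigM$'' consistently pairs with ``$\cM\to\cN$'' through every cited result and that the accumulated contravariances compose to exactly the direction stated. Once the arrows are pinned down, no ingredient beyond the quoted theorems of Dye and of Harding--D\"oring is needed, and their hypotheses -- absence of a type $I_2$ summand throughout, and $\cM\nsim\bbC^2$ only for the link to order-isomorphisms of $\VM$ -- are precisely those appearing in the statement; in particular the first three links, hence the bijection between isomorphisms of spectral presheaves and Jordan $*$-isomorphisms, remain valid when $\cM\cong\bbC^2$, the exclusion being required only for the $\VM$-link.
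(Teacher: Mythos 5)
Your plan follows exactly the paper's own proof: it chains Prop.~\ref{Prop_IsomsOfSpecPreshsAreJordanIsoms}, Prop.~\ref{Prop_PartVNIsomsAreProjLattIsoms}, Dye's theorem (Thm.~\ref{Thm_Dye}) and Thm.~\ref{Thm_OrderIsoGivesJordanAutom}, with the base-map compatibility read off from the construction, which is precisely the argument given in the paper (your extra care about tracking $T(W)=\ga(W)$ and the uniqueness of $\io$ given $\ga$ only spells out what the paper dismisses with ``by construction''). The proposal is correct and essentially identical in approach.
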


\begin{proof}
Prop. \ref{Prop_IsomsOfSpecPreshsAreJordanIsoms} (which actually uses Prop. \ref{Prop_IsomsOfSpecPreshsGiveIsomsAsPartialAlgs}, see next section) establishes a bijective correspondence between isomorphisms $\pair{\Ga}{\io}:\SigN\ra\SigM$ of spectral presheaves and isomorphisms $T:\cM_{part}\ra\cN_{part}$ of partial von Neumann algebras. Prop. \ref{Prop_PartVNIsomsAreProjLattIsoms} gives a bijective correspondence between isomorphisms $T:\cM_{part}\ra\cN_{part}$ of partial von Neumann algebras and isomorphisms $\tT:\PM\ra\PN$ of their projection lattices, and the theorem by Dye, Thm. \ref{Thm_Dye}, gives the bijective correspondence between lattice isomorphisms $\tT:\PM\ra\PN$ and Jordan $*$-isomorphisms $T:\cM\ra\cN$. Hence, every isomorphism $\pair{\Ga}{\io}:\SigN\ra\SigM$ determines a unique Jordan $*$-isomorphism $T:\cM\ra\cN$ and vice versa. Moreover, Thm. \ref{Thm_OrderIsoGivesJordanAutom} shows that if $\cM\nsim\bbC^2$, there is a bijective correspondence between isomorphisms $\tT:\PM\ra\PN$ of the projection lattices and order-isomorphisms $\ga:\VM\ra\VN$ of the context categories. By construction, $\ga$ is the base map underlying the essential geometric morphism $\Ga$ in the isomorphism $\pair{\Ga}{\io}:\SigN\ra\SigM$.
\end{proof}

This shows that a von Neumann algebra $\cN$ with no type $I_2$ summand is determined up to Jordan $*$-isomorphism by its spectral presheaf $\SigN$. Moreover, the spectral presheaf is `rigid' in the sense that if $\cM\nsim\bbC^2$, every isomorphism $\pair{\Ga}{\io}:\SigN\ra\SigM$ corresponds to a unique base map $\ga:\VM\ra\VN$ and vice versa.

As a corollary of Thm. \ref{Thm_VNAs}, we obtain:
\begin{corollary}			\label{Cor_GroupIsos}
Let $\cN$ be a von Neumann algebra not isomorphic to $\bbC^2$ and without summand of type $I_2$, with projection lattice $\PN$, context category $\VN$, associated partial von Neumann algebra $\cN_{part}$, associated Jordan algebra also denoted $\cN$, and spectral presheaf $\SigN$.

The four groups $\Aut_{ord}(\VN)$, $\Aut_{cOML}(\PN)$, $\Aut_{part}(\cN_{part})$ and $\Aut_{Jordan}(\cN)$ are isomorphic. Concretely, every order-isomorphism (base map) $\tT\in\Aut_{ord}(\VN)$ induces a unique automorphism $T:\PN\ra\PN$ of the complete orthomodular lattice of projections, which extends to a partial von Neumann automorphism $T:\cN_{part}\ra \cN_{part}$ and further to a Jordan $*$-automorphism $T:\cN\ra\cN$. Conversely, each Jordan $*$-automorphism restricts to an automorphism of the partial algebra $\cN_{part}$, further to an automorphism of $\PN$, and induces an order-automorphism of $\VN$.

Each base map $\tT:\VN\ra\VN$ induces an automorphism $\pair{\tT}{T^*}:\SigN\ra\SigN$ of the spectral presheaf, and the group $\Aut(\SigN)$ of automorphisms of the spectral presheaf is contravariantly isomorphic to the groups $\Aut_{ord}(\VN)$, $\Aut_{cOML}(\PN)$, $\Aut_{part}(\cN_{part})$ and $Aut_{Jordan}(\cN)$.
\end{corollary}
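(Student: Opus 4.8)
The statement is a compilation of the group isomorphisms already established, threaded into a single diagram; accordingly the plan is to collect those isomorphisms, compose them, and keep careful track of variances. First I would record the three \emph{covariant} group isomorphisms already in hand: $D:\Aut_{ord}(\VN)\lra\Aut_{Jordan}(\cN)$ from Cor.~\ref{Cor_OrderIsoGivesJordanAutom}, $C:\Aut_{cOML}(\PN)\lra\Aut_{Jordan}(\cN)$ from Cor.~\ref{Cor_DyeOneAlg}, and $B:\Aut_{cOML}(\PN)\lra\Aut_{part}(\cN_{part})$ from Cor.~\ref{Cor_GroupIsoPartAutCOMLAut}. Then $C^{-1}\circ D:\Aut_{ord}(\VN)\lra\Aut_{cOML}(\PN)$ and $B\circ C^{-1}\circ D:\Aut_{ord}(\VN)\lra\Aut_{part}(\cN_{part})$ are again group isomorphisms, so the four groups are pairwise isomorphic. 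For the concrete description I would note that the constituent bijections are exactly those appearing in the proofs of Thm.~\ref{Thm_OrderIsoGivesJordanAutom}, Prop.~\ref{Prop_PartVNIsomsAreProjLattIsoms} and Thm.~\ref{Thm_Dye}: a base map $\tT:\VN\lra\VN$ determines a unique automorphism $T:\PN\lra\PN$ of the complete orthomodular lattice (here the hypotheses $\cN\nsim\bbC^2$ and absence of a type $I_2$ summand are used, so that $\PN$ has no $4$-element blocks and the Harding--Navara result invoked in the proof of Thm.~\ref{Thm_OrderIsoGivesJordanAutom} applies); $T$ extends to an automorphism $T:\cN_{part}\lra\cN_{part}$ by the explicit construction in the proof of Prop.~\ref{Prop_PartVNIsomsAreProjLattIsoms}; and by Dye's theorem (Thm.~\ref{Thm_Dye}, which again needs absence of a type $I_2$ summand) the same $T$ extends further to a Jordan $*$-automorphism of $\cN$. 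These maps all agree on projections by construction, and on commuting operators the Jordan product coincides with the ordinary product (cf.\ the discussion following Cor.~\ref{Cor_DyeOneAlg}), so the extensions are mutually consistent and $B$, $C$, $D$ are compatible; the converse directions are the evident restrictions.

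Next I would treat the spectral presheaf. By Prop.~\ref{Prop_IsomsOfSpecPreshsAreJordanIsoms} an automorphism $T:\cN_{part}\lra\cN_{part}$ corresponds bijectively to an automorphism $\pair{\Ga}{\io}:\SigN\lra\SigN$; composing with the bijections of the first step, each base map $\tT:\VN\lra\VN$ yields such a presheaf automorphism whose underlying base map is $\tT$ itself (part of Thm.~\ref{Thm_VNAs}) and whose natural isomorphism is obtained componentwise from $T$ by local Gelfand duality, the component at $W\in\VN$ being the homeomorphism $\Sigma(T|_W):\SigN_{\tT(W)}\lra\SigN_W$, $\ld\mapsto\ld\circ T|_W$. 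This is what the notation $\pair{\tT}{T^*}$ abbreviates. Uniqueness of $\pair{\tT}{T^*}$ for a given $\tT$ follows because $\tT$ determines $T$ uniquely (Thm.~\ref{Thm_OrderIsoGivesJordanAutom}) and $T$ determines the presheaf automorphism uniquely (Prop.~\ref{Prop_IsomsOfSpecPreshsAreJordanIsoms}).

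Finally, for the contravariance I would invoke Cor.~\ref{Cor_ContravGroupIsoForVNAs}, which gives a \emph{contravariant} group isomorphism $A:\Aut(\SigN)\lra\Aut_{part}(\cN_{part})$, and post-compose $A$ with the covariant isomorphisms $B^{-1}$, $C\circ B^{-1}$ and $D^{-1}\circ C\circ B^{-1}$ of the first step. Each composite is again a contravariant group isomorphism, so $\Aut(\SigN)$ is contravariantly isomorphic to each of $\Aut_{part}(\cN_{part})$, $\Aut_{cOML}(\PN)$, $\Aut_{Jordan}(\cN)$ and $\Aut_{ord}(\VN)$. The only genuine subtlety --- and the step most likely to require attention --- is purely bookkeeping: keeping the variances straight and checking that the individual isomorphisms restrict to the \emph{same} underlying maps on $\PN$, on $\cN_{part}$ and on $\VN$, so that the whole diagram of extensions commutes. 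There is no new mathematical content beyond what Thm.~\ref{Thm_VNAs} has already assembled; the corollary is essentially its group-theoretic shadow.
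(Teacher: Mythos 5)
Your proposal is correct and follows essentially the same route as the paper: the paper gives no separate argument for this corollary, deriving it directly from Thm.~\ref{Thm_VNAs}, whose proof composes exactly the correspondences you invoke (Prop.~\ref{Prop_IsomsOfSpecPreshsAreJordanIsoms}, Prop.~\ref{Prop_PartVNIsomsAreProjLattIsoms}, Thm.~\ref{Thm_Dye}, Thm.~\ref{Thm_OrderIsoGivesJordanAutom}), specialised to automorphisms and read as the group isomorphisms $A$, $B$, $C$, $D$. Your explicit bookkeeping of variances and of where the hypotheses $\cN\nsim\bbC^2$ and absence of a type $I_2$ summand enter matches the paper's intended argument.
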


This formulation will be useful when considering time evolution and flows on the spectral presheaf in \cite{Doe12c}.

\subsection{Unital $C^*$-algebras}			\label{Subsec_CStarAlgs}
\begin{definition}			\label{Def_IsomsFromSigAToSigB}
Let $\cA,\,\cB$ be unital $C^*$-algebras, and let $\SigA,\,\SigB$ be their spectral presheaves. An \emph{isomorphism from $\SigB$ to $\SigA$} is a pair $\pair{\Ga}{\io}$, where $\Ga:\SetCAop\ra\SetCBop$ is an essential geometric isomorphism, induced by an order-isomorphism $\ga:\CA\ra\CB$ (called the \emph{base map}). $\Ga^*:\SetCBop\ra\SetCAop$ is the inverse image functor of the geometric isomorphism $\Ga$, and $\io:\Ga^*(\SigB)\ra\SigA$ is a natural isomorphism for which each component $\io_C:(\Ga^*(\SigB))_C\ra\SigA_C$, where $C\in\CA$, is a homeomorphism. Hence, an isomorphism $\pair{\Ga}{\io}$ acts by
\begin{equation}
			\SigB \stackrel{\Ga^*}{\lra} \Ga^*(\SigB) \stackrel{\io}{\lra} \SigA.
\end{equation}
We will also use the notation $\io\circ\Ga^*$ for an isomorphism $\pair{\Ga}{\io}$. If $\cA=\cB$, an isomorphism $\pair{\Ga}{\io}:\SigA\ra\SigA$ is called an \emph{automorphism of $\SigA$}. \end{definition}

It is easy to see that the automorphisms of $\SigA$ form a group, which we denote as $\Aut(\SigA)$. The group operation is
\begin{align}
			\Aut(\SigA)\times\Aut(\SigA) &\lra \Aut(\SigA)\\			\nonumber
			(\pair{\Ga_1}{\io_1},\pair{\Ga_2}{\io_2}) &\lmt \pair{\Ga_2\circ\Ga_1}{\io_1\circ\io_2},
\end{align}
where $\Ga_2\circ\Ga_1:\SetVNop\ra\SetVNop$ is the essential geometric automorphism induced by $\ga_2\circ\ga_1$, the composite of the base maps underlying $\Ga_2$ respectively $\Ga_1$, and $\io_1\circ\io_2$ is the natural isomorphism with components
\begin{equation}
			\forall V\in\VN: (\io_1\circ\io_2)_V=\io_{1;V}\circ\io_{2;\ga_1(V)}
\end{equation}
(cf. Def. \ref{Def_CatOfPresheaves} and Lemma \ref{Lem_SigNIsGroup}). An isomorphism in the sense of Def. \ref{Def_IsomsFromSigAToSigB} from $\SigB$ as an object in the topos $\SetCBop$ to $\SigA$ as an object in the topos $\SetCAop$ corresponds to an isomorphism from $\SigB$ to $\SigA$ as objects in the category $\Pre{\KHaus}$, see Def. \ref{Def_CatOfPresheaves}. Conversely, each isomorphism from $\SigB$ to $\SigA$ in $\Pre{\KHaus}$ determines a unique isomorphism from $\SigB$ as an object of $\SetCBop$ to $\SigA$ as an object of $\SetCAop$.

As in the case of von Neumann algebras, requiring that the base map $\ga:\CA\ra\CB$ is an order-isomorphism is equivalent to saying that $\ga$ is an invertible covariant functor. More succinctly, we could just require that there is an essential geometric isomorphism $\Ga:\SetCAop\ra\newline\SetCBop$ (which implies that there is an underlying invertible covariant functor $\ga:\CA\ra\newline\CB$). 

We will show in Prop. \ref{Prop_RepOfAutcAInAutSigA} that every isomorphism from a unital $C^*$-algebra $\cA$ to a unital $C^*$-algebra $\cB$ gives an isomorphism from $\SigB$ to $\SigA$ in the sense defined above.

Using a result by Hamhalter \cite{Ham11}, we will show that if $\cA,\,\cB$ are neither isomorphic to $\bbC^2$ nor to $\mc B(\bbC^2)$, then every base map $\ga$ induces a unique natural isomorphism $\io$ and hence a unique isomorphism $\pair{\Ga}{\io}:\SigB\ra\SigA$ of the spectral presheaf, see Thm. \ref{Thm_SpecPresheafDeterminesPartialUnitalCStarAlg}. 

We now characterise isomorphisms from $\SigB$ to $\SigA$:

\begin{lemma}			\label{Lem_NatIsoGivesPartialAutom}
Let $\cA,\,\cB$ be unital $C^*$-algebras, with spectral presheaves $\SigA,\,\SigB$. Each isomorphism $\pair{\Ga}{\io}:\SigB\ra\SigA$ induces an isomorphism $T:\cA_{part}\ra \cB_{part}$ from the partial unital $C^*$-algebra $\cA_{part}$ of normal operators in $\cA$ to the partial unital $C^*$-algebra $\cB_{part}$ of normal operators in $\cB$.
\end{lemma}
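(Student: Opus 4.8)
The approach is to build $T$ one context at a time and then check that the local pieces glue. First I would observe that for each $C\in\CA$ the component $\io_C\colon(\Ga^*\SigB)_C=\SigB_{\ga(C)}=\Sigma(\ga(C))\to\SigA_C=\Sigma(C)$ is a homeomorphism of compact Hausdorff spaces, so by Gelfand duality it corresponds to a unital $*$-isomorphism of abelian $C^*$-algebras $T_C\colon C\to\ga(C)$, determined by $\io_C(\ld)=\ld\circ T_C$ for all $\ld\in\Sigma(\ga(C))$ (equivalently $\widehat{T_C(\hA)}=\widehat{\hA}\circ\io_C$ on $\Sigma(\ga(C))$). Since $C_0=\bbC1$ is the least element of both $\CA$ and $\CB$ and $\ga$ is an order-isomorphism, $\ga(C_0)=\bbC1$ and $T_{C_0}=\id$, so every $T_C$ is unital; and since $T_C$ is an isomorphism onto $\ga(C)$ and $C^*(\hA,1)$ is singly generated, $\ga(C^*(\hA,1))=C^*(T_C(\hA),1)$ for every normal $\hA\in C$.

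Next I would glue. Every normal $\hA\in\cA$ lies in $C^*(\hA,1)\in\CA$, so I define $T(\hA):=T_{C^*(\hA,1)}(\hA)$. The crucial point is that $T(\hA)=T_C(\hA)$ for \emph{every} context $C\ni\hA$; as $C^*(\hA,1)\subseteq C$ it suffices to treat an inclusion $i_{C'C}\colon C'\hra C$ with $\hA\in C'$, and then the identity $T_{C'}(\hA)=T_C(\hA)$ is exactly the Gelfand dual of the naturality square of $\io$ for $i_{C'C}$: the left edge $\Ga^*(\SigB)(i_{C'C})$ is restriction along $\ga(C')\hra\ga(C)$, the right edge is restriction along $C'\hra C$, and chasing $\ld\in\Sigma(\ga(C))$ around the square gives $\io_{C'}(\ld|_{\ga(C')})=\io_C(\ld)|_{C'}$, whence $\ld(T_{C'}(\hA))=(\io_C(\ld))(\hA)=\ld(T_C(\hA))$ for all such $\ld$, so the Gelfand transforms of $T_{C'}(\hA)$ and $T_C(\hA)$ in $\ga(C)$ coincide and the elements are equal. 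This makes $T\colon\cA_{part}\to\cB_{part}$ well defined.

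To see that $T$ is a morphism of partial unital $C^*$-algebras, I would take commuting normal $\hA,\hB\in\cA$ and $a,b\in\bbC$; by the Fuglede--Putnam theorem $\hA$ commutes with $\hB^*$, so $C:=C^*(\hA,\hB,1)$ is abelian, i.e.\ $C\in\CA$, and it contains $\hA,\hB,a\hA+b\hB,\hA\hB,\hA^*$ (all normal, since $C$ is abelian). Replacing $T$ by $T_C$ on all of these via the previous step, and using that $T_C$ is a unital $*$-isomorphism, the relations $T(a\hA+b\hB)=aT(\hA)+bT(\hB)$, $T(\hA\hB)=T(\hA)T(\hB)$, $T(\hA^*)=T(\hA)^*$ and $T(1)=1$ are immediate. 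For invertibility I would run exactly the same construction on the inverse isomorphism $\pair{\Ga^{-1}}{\io^{-1}}\colon\SigA\to\SigB$, obtaining a morphism $T'\colon\cB_{part}\to\cA_{part}$ whose local pieces are $T'_{\ga(C)}=(T_C)^{-1}$; then for normal $\hA\in\cA$ with $C=C^*(\hA,1)$ one has $T(\hA)\in\ga(C)=C^*(T(\hA),1)$, so $T'(T(\hA))=T'_{\ga(C)}(T_C(\hA))=\hA$, and symmetrically $T(T'(\hB))=\hB$; hence $T$ is bijective with $T^{-1}=T'$ again a morphism, so $T$ is an isomorphism $\cA_{part}\to\cB_{part}$.

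I expect the main obstacle to be the gluing step: the whole argument rests on the local $*$-isomorphisms $T_C$ fitting together consistently, and this compatibility is precisely the naturality of $\io$ transported through Gelfand duality — compounded by the standard but not purely formal fact, via Fuglede--Putnam, that two commuting normal operators generate an \emph{abelian} $C^*$-subalgebra, which is what supplies the common context needed both for well-definedness and for checking the algebraic relations.
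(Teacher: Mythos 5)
Your proof is correct and follows essentially the same route as the paper's: dualise each component $\io_C$ of $\io$ to a unital $*$-isomorphism $C\ra\ga(C)$ of abelian $C^*$-algebras and glue these via the naturality of $\io$, the paper merely packaging the local isomorphisms as a natural isomorphism $\kappa:\fA\ra\tilde\Ga^*(\fB)$ instead of your pointwise character-chasing argument. Your explicit verification of the partial-algebra axioms (using Fuglede--Putnam to get a common abelian context) and of invertibility via $\pair{\Ga^{-1}}{\io^{-1}}$ only fills in details the paper leaves implicit.
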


\begin{proof}
Since $\pair{\Ga}{\io}:\SigB\ra\SigA$ is an isomorphism, the base map $\ga:\CA\ra\CB$ corresponding to the essential geometric isomorphism $\Ga:\SetCAop\ra\SetCBop$ is an order-isomorphism, and each component 
\begin{equation}
			\io_{C'}:\Ga^*(\SigB)_{C'}=\SigB_{\ga(C')}\lra\SigA_{C'},
\end{equation}
where $C'\in\CA$, is a homeomorphism. By Gelfand duality, this corresponds to a unique (unital) isomorphism
\begin{align}
			k_{C'}: C(\SigA_{C'}) &\lra C(\SigB_{\ga(C')})\\			\nonumber
			f &\lmt f\circ\io_{C'}.
\end{align}
of abelian $C^*$-algebras. Since Gelfand duality (see \eq{Eq_GelfandDuality}) is a dual equivalence, there is a natural isomorphism $\eta:\id_{\ucC}\ra C(-)\circ\Sig$, the unit of the adjunction. Thus, there are in particular isomorphisms $\eta_{C'}: C'\ra C(\SigA_{C'})$ and $\eta_{\ga(C')}^{-1}:C(\SigB_{\ga(C')})\ra\ga(C')$ of unital abelian $C^*$-algebras, so we get an isomorphism
\begin{equation}
			\kappa_{C'}=\eta_{\ga(C')}^{-1}\circ k_{C'}\circ\eta_{C'}: C' \lra \ga(C')
\end{equation}
between the abelian $C^*$-algebras ${C'}$ and $\ga(C')$, for every $C'\in\CA$. We note that $C'=\fA_{C'}$ is the component at $C'\in\CA$ of the Bohrification of $\cA$ (see Def. \ref{Def_Bohrification}). Analogously, $\ga(C')=\fB_{\ga(C')}=(\tilde\Ga^*(\fB))_{C'}$ is the component at $C'$ of $\tilde\Ga^*(\fB)$, the pullback of the Bohrification of $\cB$ by the essential geometric morphism $\tilde\Ga:\SetCA\ra\SetCB$ of copresheaf topoi that is induced by the base map $\ga:\CA\ra\CB$ that is underlying the given isomorphism $\pair{\Ga}{\io}:\SigB\ra\SigA$ of the spectral presheaves.

Let $\kappa=(\kappa_C)_{C\in\CA}$. Naturality of $\io$ and $\eta$ easily imply that
\begin{align}
	\kappa: \fA \lra \tilde\Ga^*(\fB)
\end{align}
is a natural transformation, and the fact that $\io$ has an inverse $\io^{-1}:\SigA\ra\Ga^*(\SigB)$ implies that $\kappa$ has an inverse $\kappa^{-1}:\tilde\Ga^*(\fB)\ra\fA$, too. Hence, $\kappa:\fA\ra\tilde\Ga^*(\fB)$ is a natural isomorphism in the topos $\SetCA$ for which each component $\kappa_C$, $C\in\CA$, is an isomorphism of abelian $C^*$-algebras. Define a map
\begin{align}
			T: \cA_{part} &\lra \cB_{part}\\			\nonumber
			\hA &\lmt \kappa_C(\hA)
\end{align}
from the set of normal operators in $\cA$ to the normal operators in $\cB$, where $C\in\CA$ is an abelian subalgebra that contains $\hA$. This map is well-defined (that is, the value $T(\hA)=\kappa_C(\hA)$ does not depend on which algebra $C$ containing $\hA$ we choose), since $\kappa=(\kappa_C)_{C\in\CA}$ is natural, and the copresheaf maps of $\fA$ are simply inclusions. Clearly, $T(\hat 1)=\hat 1$.
\end{proof}

Conversely, we have 

\begin{lemma}			\label{Lem_PartialMorGivesNatIso}
Every isomorphism $T:\cA_{part}\ra \cB_{part}$ from a unital partial $C^*$-algebra $\cA_{part}$ to a unital partial $C^*$-algebra $\cB_{part}$ induces an isomorphism $\pair{\Ga}{\io}:\SigB\ra\SigA$, that is, an order-isomorphism $\ga:\CA\ra\CB$ with corresponding essential geometric isomorphism $\Ga:\SetCAop\ra\SetCBop$ and a natural isomorphism $\io:\Ga^*(\SigB)\ra\SigA$ such that each component $\io_C$ is a homeomorphism.
\end{lemma}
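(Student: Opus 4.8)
The plan is to run the construction of Lemma \ref{Lem_NatIsoGivesPartialAutom} in reverse. First I would extract the base map from $T$: for each $C\in\CA$, set $\ga(C):=T(C)$. Since $T$ preserves commutativity and the involution, the restriction $T|_C$ is a unital $*$-homomorphism of the abelian $C^*$-algebra $C$ into $\cB$ (on the abelian algebra $C$ all the partial operations of $\cA_{part}$ are total, and $T$ respects them); $T|_C$ is injective because $T$ is a bijection, hence isometric, so its image $T(C)$ is a norm-closed unital abelian $C^*$-subalgebra of $\cB$, i.e.\ $\ga(C)\in\CB$, and $T|_C:C\to\ga(C)$ is a unital $*$-isomorphism of abelian $C^*$-algebras. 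Because $T$ is order-preserving ($C'\subseteq C$ implies $T(C')\subseteq T(C)$) and $T^{-1}$ is a partial morphism with the same property, $\ga:\CA\to\CB$ is an order-isomorphism with inverse $C\mapsto T^{-1}(C)$.

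Next, by Remark \ref{Rem_EssGeomMorFromFunctors}, this order-isomorphism $\ga$ induces an essential geometric isomorphism $\Ga:\SetCAop\to\SetCBop$ whose inverse image functor satisfies $\Ga^*(\SigB)_C=\SigB_{\ga(C)}=\Sigma(T(C))$ for all $C\in\CA$, with restriction maps $\ld\mapsto\ld|_{T(C')}$ for inclusions $i_{C'C}:C'\hookrightarrow C$. To build the natural isomorphism $\io:\Ga^*(\SigB)\to\SigA$, I would apply Gelfand duality to each $T|_C:C\to\ga(C)$: since $\Sigma$ is contravariant, this gives a homeomorphism $\io_C:=\Sigma(T|_C):\Sigma(T(C))\to\Sigma(C)$, $\ld\mapsto\ld\circ T|_C$, i.e.\ $\io_C:\Ga^*(\SigB)_C\to\SigA_C$, with inverse $\Sigma((T|_C)^{-1})=\Sigma(T^{-1}|_{T(C)})$.

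It then remains to check naturality of $(\io_C)_{C\in\CA}$. For $i_{C'C}:C'\hookrightarrow C$ and $\ld\in\Sigma(T(C))$ one computes $\io_{C'}(\Ga^*(\SigB)(i_{C'C})(\ld))=(\ld|_{T(C')})\circ T|_{C'}$, while $\SigA(i_{C'C})(\io_C(\ld))=(\ld\circ T|_C)|_{C'}$; these agree because $T|_C\circ i_{C'C}=i_{T(C')T(C)}\circ T|_{C'}$, which is just the statement that $T$ commutes with the inclusions. Hence $\io$ is a natural transformation, each component a homeomorphism, so $\io:\Ga^*(\SigB)\to\SigA$ is a natural isomorphism and $\pair{\Ga}{\io}:\SigB\to\SigA$ is an isomorphism of spectral presheaves in the sense of Def.\ \ref{Def_IsomsFromSigAToSigB}. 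I would also note that the assignment $T\mapsto\pair{\Ga}{\io}$ is inverse to the assignment $\pair{\Ga}{\io}\mapsto T$ of Lemma \ref{Lem_NatIsoGivesPartialAutom}, since in both directions the component maps are related by Gelfand duality and the unit $\eta:\id_{\ucC}\to C(-)\circ\Sig$; this is what is needed for Prop.\ \ref{Prop_IsomsOfSpecPreshsGiveIsomsAsPartialAlgs}.

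The main obstacle — indeed essentially the only nonroutine point — is justifying that $T|_C$ is a genuine unital $*$-isomorphism of $C^*$-algebras, in particular that $T(C)$ is norm-closed, so that Gelfand duality applies componentwise. This rests on the standard $C^*$-fact that an injective $*$-homomorphism between $C^*$-algebras is isometric, together with the observation that $T$ restricted to the abelian algebra $C$ satisfies the full set of (now total) algebra-homomorphism axioms. Everything after that is the evident naturality bookkeeping, dual to the argument in Lemma \ref{Lem_NatIsoGivesPartialAutom}.
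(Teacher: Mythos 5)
Your proposal is correct and follows essentially the same route as the paper's proof: extract the base map $\ga(C)=T|_C(C)$ (order-isomorphism since $T^{-1}$ is also a partial morphism), observe $T|_C:C\ra\ga(C)$ is a unital $*$-isomorphism of abelian $C^*$-algebras, and take $\io_C=\Sigma(T|_C):\ld\mt\ld\circ T|_C$ as the components of the natural isomorphism. The only cosmetic differences are that you justify norm-closedness of $T(C)$ via injectivity and isometry (the paper invokes the general fact that images of $*$-homomorphisms are closed) and that you write out the naturality square explicitly where the paper states it follows by construction; both are fine.
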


\begin{proof}
Let $C\in\CA$ be a unital abelian $C^*$-subalgebra of $\cA$. Then $T|_C:C\ra \cB_{part}$ is a unital $*$-homomorphism from the abelian $C^*$-algebra $C$ into $\cB_{part}$, so $T|_C(C)$ is norm-closed and hence a unital abelian $C^*$-subalgebra of $\cB_{part}\subset\cB$, that is, $T|_C(C)\in\CB$.

Clearly, $C'\subset C$ implies $T|_{C'}(C')\subset T|_C(C)$, so we have a monotone map
\begin{align}
			\ga:\CA &\lra \CB\\			\nonumber
			C &\lmt T|_C(C).
\end{align}
Moreover, since $T$ has an inverse $T^{-1}$ that is a unital partial $*$-isomorphism as well, the map $\ga$ has an inverse $\ga^{-1}:\CB\ra\CA$, too, and hence $\ga$ is an order-isomorphism (base map).

Since $T$ is an isomorphism of partial unital $C^*$-algebras, $T|_C:C\ra T(C)$ is isomorphism of unital abelian $C^*$-algebras. There is a natural isomorphism $t:\fA\ra\Ga^*(\fB)$ in $\SetCA$, with components
\begin{equation}
			\forall C\in\CA: t_C:=T|_C: C \lra T(C).
\end{equation}
Define, for each $C\in\CA$,
\begin{align}
			\io_C: (\Ga^*(\SigB))_C = \SigB_{\ga(C)} &\lra \SigA_C\\			\nonumber
			\ld &\lmt \ld\circ T|_C
\end{align}
the homeomorphism between the Gelfand spectra of $\ga(C)$ and $C$ corresponding to $T|_C$. By construction, the $\io_C$ are the components of a natural isomorphism $\io:\Ga^*(\SigB)\ra\SigA$.
\end{proof}

Clearly, the constructions in Lemma \ref{Lem_NatIsoGivesPartialAutom} and Lemma \ref{Lem_PartialMorGivesNatIso} are inverse to each other. Summing up, we have shown:

\begin{proposition}			\label{Prop_IsomsOfSpecPreshsGiveIsomsAsPartialAlgs}
Let $\cA,\,\cB$ be unital $C^*$-algebras. There is a bijective correspondence between isomorphisms $\pair{\Ga}{\io}:\SigB\ra\SigA$ of the spectral presheaves and isomorphisms $T:\cA_{part}\ra\cB_{part}$ of the associated partial unital $C^*$-algebras. Hence, $\cA$ and $\cB$ are isomorphic as partial unital $C^*$-algebras if and only if their spectral presheaves are isomorphic.
\end{proposition}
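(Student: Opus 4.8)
The plan is to obtain the claimed bijection directly from the two preceding lemmas and then verify that the resulting assignments are mutually inverse. Lemma \ref{Lem_NatIsoGivesPartialAutom} already provides a map $\Psi$ sending an isomorphism $\pair{\Ga}{\io}:\SigB\ra\SigA$ of spectral presheaves to an isomorphism $T:\cA_{part}\ra\cB_{part}$ of partial unital $C^*$-algebras, and Lemma \ref{Lem_PartialMorGivesNatIso} provides a map $\Xi$ in the opposite direction. So the content of the proof reduces to checking $\Psi\circ\Xi=\id$ and $\Xi\circ\Psi=\id$, after which the final ``hence'' clause is immediate.

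For $\Xi\circ\Psi=\id$: starting from $\pair{\Ga}{\io}$, Lemma \ref{Lem_NatIsoGivesPartialAutom} produces, for each $C\in\CA$, an isomorphism $\kappa_C=\eta_{\ga(C)}^{-1}\circ k_C\circ\eta_C:C\ra\ga(C)$, where $k_C(f)=f\circ\io_C$ and $\eta$ is the unit of Gelfand duality, so $\kappa_C$ is exactly the $C^*$-algebra isomorphism corresponding under \eq{Eq_GelfandDuality} to the homeomorphism $\io_C$. Feeding the resulting $T$ (given by $\hA\mt\kappa_C(\hA)$ for any $C\ni\hA$) back into Lemma \ref{Lem_PartialMorGivesNatIso}, the base map is recovered as $C\mt T|_C(C)=\kappa_C(C)=\ga(C)$, i.e. $\ga$ again, and the componentwise homeomorphism is $\io'_C:\ld\mt\ld\circ\kappa_C=\Sigma(\kappa_C)$; by the triangle identities of the dual equivalence \eq{Eq_GelfandDuality} one has $\Sigma(\kappa_C)=\io_C$, so $\Xi\circ\Psi$ returns the original pair. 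Conversely, for $\Psi\circ\Xi=\id$: starting from $T$, Lemma \ref{Lem_PartialMorGivesNatIso} gives $\ga:C\mt T|_C(C)$ and $\io_C:\ld\mt\ld\circ T|_C=\Sigma(T|_C)$; then the component $\kappa_C$ produced by Lemma \ref{Lem_NatIsoGivesPartialAutom} is $\eta_{\ga(C)}^{-1}\circ C(\Sigma(T|_C))\circ\eta_C$, and the naturality square of the unit $\eta$ for the morphism $T|_C:C\ra\ga(C)$ gives $\kappa_C=T|_C$, so the reconstructed operator map is $T$ itself. Finally, since being isomorphic is symmetric, the bijection shows $\cA_{part}\cong\cB_{part}$ iff there is an isomorphism $\SigB\ra\SigA$ iff $\SigA\cong\SigB$, which is the asserted ``hence''.

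The main obstacle, modest as it is here, is the bookkeeping of the unit (and counit) of the Gelfand adjunction: one must make sure that the natural isomorphism $\io$ reconstructed from $T$ (and the $C^*$-algebra isomorphism $T$ reconstructed from $\io$) is \emph{equal} to, and not merely naturally isomorphic to, the datum we started from. This is precisely where the triangle identities of the dual equivalence \eq{Eq_GelfandDuality} are used; everything else is a direct transcription of the componentwise constructions already carried out in Lemmas \ref{Lem_NatIsoGivesPartialAutom} and \ref{Lem_PartialMorGivesNatIso}, including the observation that these constructions are manifestly compatible with the order-isomorphisms $\ga$ so that no extra compatibility needs to be checked for the geometric part $\Ga$.
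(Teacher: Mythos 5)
Your proposal is correct and follows essentially the same route as the paper: the paper simply observes that the constructions of Lemma \ref{Lem_NatIsoGivesPartialAutom} and Lemma \ref{Lem_PartialMorGivesNatIso} are mutually inverse, which is exactly the bijection you set up. Your explicit verification of the two composites via naturality of the Gelfand unit and the triangle identities (giving $\kappa_C=T|_C$ and $\Sigma(\kappa_C)=\io_C$) just spells out the detail the paper leaves as ``clearly''.
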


For a moment, we assume $\cA=\cB$ and consider automorphisms of the spectral presheaf $\SigA$. If $\pair{\Ga_1}{\io_1},\pair{\Ga_2}{\io_2}$ are two automorphisms of $\SigA$, and $T_1,T_2$ are the corresponding partial $*$-automorphisms of $\cA_{part}$, then by construction, the partial $*$-automorphism corresponding to the composite automorphism $\pair{\Ga_1}{\io_1}\circ\pair{\Ga_2}{\io_2}$ is $T_2\circ T_1$.

\begin{corollary}			\label{Cor_ContravGroupIso}
There is a contravariant group isomorphism between $\Aut(\SigA)$, the group of automorphisms of $\SigA$, and $\Aut_{part}(\cA_{part})$, the group of unital partial $*$-automorphisms of $\cA_{part}$.
\end{corollary}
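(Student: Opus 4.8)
The plan is to specialise Proposition \ref{Prop_IsomsOfSpecPreshsGiveIsomsAsPartialAlgs} to the case $\cB=\cA$. That proposition already provides a bijection between isomorphisms $\pair{\Ga}{\io}:\SigA\ra\SigA$ and isomorphisms $T:\cA_{part}\ra\cA_{part}$; by Def. \ref{Def_IsomsFromSigAToSigB} and the definition of $\uCpart$, the two sides are precisely the underlying sets of $\Aut(\SigA)$ and $\Aut_{part}(\cA_{part})$, so we obtain a bijection
\[
A:\Aut(\SigA)\lra\Aut_{part}(\cA_{part}),
\]
where $A(\pair{\Ga}{\io})$ is the partial $*$-automorphism $T$ produced by Lemma \ref{Lem_NatIsoGivesPartialAutom}, with inverse given by the construction of Lemma \ref{Lem_PartialMorGivesNatIso}. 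The only remaining task is to check that $A$ intertwines the two group structures after reversal of composition order, i.e.\ that $A$ is an anti-isomorphism of groups.

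First I would note that $A$ is unital: the identity automorphism $\pair{\Id}{\id}$ of $\SigA$ has base map $\ga=\id_{\CA}$ and trivial $\io$, so unwinding Lemma \ref{Lem_NatIsoGivesPartialAutom} makes every component $\kappa_C$ equal to $\id_C$ and hence $A(\pair{\Id}{\id})=\id_{\cA_{part}}$. For multiplicativity up to order reversal, I would invoke the computation recorded in the paragraph immediately preceding the statement: if $\pair{\Ga_1}{\io_1}$ and $\pair{\Ga_2}{\io_2}$ are automorphisms of $\SigA$ with $T_1=A(\pair{\Ga_1}{\io_1})$ and $T_2=A(\pair{\Ga_2}{\io_2})$, then tracing the components of Lemma \ref{Lem_NatIsoGivesPartialAutom} through the group operation of $\Aut(\SigA)$ (Def. \ref{Def_IsomsFromSigAToSigB}, cf.\ Lemma \ref{Lem_SigNIsGroup}) gives
\[
A\bigl(\pair{\Ga_1}{\io_1}\circ\pair{\Ga_2}{\io_2}\bigr)=T_2\circ T_1 .
\]
Concretely, the base map underlying the composite automorphism is $\ga_2\circ\ga_1$, and on a normal element $\hA$ lying in a context $C$ the associated partial $*$-automorphism acts by first applying the component at $C$ of the natural isomorphism coming from $\pair{\Ga_1}{\io_1}$ and then the component at $\ga_1(C)$ of the one coming from $\pair{\Ga_2}{\io_2}$, which is exactly $T_2(T_1(\hA))$. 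Since $A$ is a unital bijective anti-homomorphism of groups, it automatically sends inverses to inverses, hence it is a contravariant group isomorphism, which is the claim.

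I do not expect any genuine obstacle here: the contravariance is already forced by the composition law of Def. \ref{Def_CatOfPresheaves} (the natural-transformation parts of morphisms of presheaves compose in the opposite order to the base functors), and the bijection itself is just Proposition \ref{Prop_IsomsOfSpecPreshsGiveIsomsAsPartialAlgs} read in the one-algebra case. The only mildly fiddly point, already dispatched in the discussion preceding the corollary, is keeping track of the context at which each component $\kappa_C$ is evaluated when composing two isomorphisms of spectral presheaves.
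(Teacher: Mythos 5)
Your proposal is correct and follows exactly the route the paper intends: specialise Prop.~\ref{Prop_IsomsOfSpecPreshsGiveIsomsAsPartialAlgs} (i.e.\ Lemmas \ref{Lem_NatIsoGivesPartialAutom} and \ref{Lem_PartialMorGivesNatIso}) to $\cB=\cA$ to get the bijection, and use the order reversal of composites noted in the paragraph preceding the corollary -- your explicit component computation $A\bigl(\pair{\Ga_1}{\io_1}\circ\pair{\Ga_2}{\io_2}\bigr)=T_2\circ T_1$ is precisely what the paper asserts ``by construction''. The only cosmetic slip is calling $\kappa_{1;C}$ ``the component at $C$ of the natural isomorphism'' $\io_1$; it is the Gelfand-dual algebra isomorphism of $\io_{1;C}$, as your computation in fact uses.
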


\begin{definition}
Let $\cA,\,\cB$ be unital $C^*$-algebras, and let $\fA,\,\fB$ be their Bohrifications (see Def. \ref{Def_Bohrification}). An \emph{isomorphism from $\fA$ to $\fB$} is a pair $\pair{\tilde\Ga}{\kappa}$, where $\tilde\Ga:\SetCA\ra\SetCB$ is an essential geometric isomorphism, induced by an order-isomorphism $\ga:\CA\ra\CB$ (the \emph{base map}). $\tilde\Ga^*:\SetCB\ra\SetCA$ is the inverse image functor of the geometric isomorphism $\tilde\Ga$, and $\kappa:\fA\ra\tilde\Ga^*(\fB)$ is a natural isomorphism for which each component $\kappa_C:\fA_C\ra\tilde\Ga^*(\fB)_C$, $C\in\CA$, is a unital $*$-isomorphism (of abelian $C^*$-algebras). If $\cA=\cB$, an isomorphism $\pair{\tilde\Ga}{\kappa}:\fA\ra\fA$ is called an \emph{automorphism of $\fA$}.
\end{definition}

The automorphisms of $\fA$ form a group, which we denote as $\Aut(\fA)$.

An isomorphism from $\fA$ as an object of the topos $\SetCA$ to $\ol\cB$ as an object of the topos $\SetCB $ in the sense defined above corresponds to an isomorphism from $\fA$ to $\fB$ in the category $\Cpr{\ucC}$ of copresheaves with values in unital abelian $C^*$-algebras (see Def. \ref{Def_CatOfCopresheaves}). Prop. \ref{Prop_DualEquiv} implies:

\begin{corollary}			\label{Cor_AutSigAContravIsomToAutolcA}
Let $\cA,\,\cB$ be unital $C^*$-algebras. Every isomorphism $\pair{\tilde\Ga}{\kappa}:\fA\ra\fB$ between their Bohrifications corresponds to an isomorphism $\pair{\Ga}{\io}:\SigB\ra\SigA$ in the opposite direction between their spectral presheaves. If $\cA=\cB$, there is a contravariant group isomorphism
\begin{equation}
			\Aut(\SigA) \lra \Aut(\fA)
\end{equation}
between the group of automorphisms of the spectral presheaf of $\cA$ and the group of automorphisms of the Bohrification of $\cA$.
\end{corollary}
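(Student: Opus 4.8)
The plan is to deduce this directly from the local duality of Proposition \ref{Prop_DualEquiv} applied to Gelfand duality, together with the identifications $\Sigma(\fA)=\SigA$ and $C(\SigA)=\fA$ established for the local Gelfand duality \eq{Eq_GelfandDualityPreshCopresh}. Recall that Gelfand duality lifts, via \eq{Eq_GelfandDualityPreshCopresh}, to a dual equivalence between $\Cpr{\ucC}$ and $\Pre{\KHaus}^{\op}$ with functors $\Sigma$ and $C(-)$, and that under it the Bohrification $\fA$ of $\cA$, an object of $\Cpr{\ucC}$, corresponds to the spectral presheaf $\SigA$, an object of $\Pre{\KHaus}$, and similarly for $\cB$. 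The first step is simply to record these facts.

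Next I would identify the notion of \emph{isomorphism} used in the statement with the categorical notion. An isomorphism $\pair{\tilde\Ga}{\kappa}\colon\fA\ra\fB$ in the sense of the Definition preceding the corollary --- an essential geometric isomorphism $\tilde\Ga$ induced by an order-isomorphism $\ga\colon\CA\ra\CB$, together with a natural isomorphism $\kappa\colon\fA\ra\tilde\Ga^*(\fB)$ whose components are $*$-isomorphisms of abelian $C^*$-algebras --- is, by the remark immediately before the corollary, exactly an isomorphism from $\fA$ to $\fB$ in $\Cpr{\ucC}$ (cf. Def. \ref{Def_CatOfCopresheaves}); here one uses that invertible functors between poset base categories are order-isomorphisms, and that a morphism $\pair{I}{\theta}$ in $\Cpr{\ucC}$ is invertible iff $I$ is invertible and $\theta$ is a natural isomorphism, i.e. each component is a $*$-isomorphism. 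The analogous remark, with Def. \ref{Def_IsomsFromSigAToSigB} and Def. \ref{Def_CatOfPresheaves}, identifies isomorphisms $\pair{\Ga}{\io}\colon\SigB\ra\SigA$ with categorical isomorphisms from $\SigB$ to $\SigA$ in $\Pre{\KHaus}$.

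The main step is then purely formal. Since $\Sigma\colon\Cpr{\ucC}\ra\Pre{\KHaus}^{\op}$ is part of an equivalence, it carries isomorphisms to isomorphisms and is bijective on hom-sets; applied to an isomorphism $\fA\ra\fB$ it gives an isomorphism $\Sigma(\fA)\ra\Sigma(\fB)$ in $\Pre{\KHaus}^{\op}$, that is, an isomorphism $\SigB\ra\SigA$ in $\Pre{\KHaus}$, and by the explicit description of $\Sigma$ on arrows in the proof of Prop. \ref{Prop_DualEquiv} this isomorphism leaves the base map $\ga$ unchanged and sends each component to $\Sigma(\kappa_C)$, a homeomorphism --- so it has the required form $\pair{\Ga}{\io}$. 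The inverse assignment is $C(-)$, and the two are mutually inverse because $\Sigma\circ C(-)$ and $C(-)\circ\Sigma$ are naturally isomorphic to the identity functors (Prop. \ref{Prop_DualEquiv}), the relevant natural transformations being the unit and counit of Gelfand duality, which are isomorphisms. This gives the claimed bijective correspondence. For $\cA=\cB$, unwinding Def. \ref{Def_CatOfPresheaves} and Def. \ref{Def_CatOfCopresheaves} shows that the group operation on $\Aut(\SigA)$ written after Def. \ref{Def_IsomsFromSigAToSigB}, and the analogous one on $\Aut(\fA)$, are just composition of isomorphisms in $\Pre{\KHaus}$ and in $\Cpr{\ucC}$; since $\Sigma$ is covariant into $\Pre{\KHaus}^{\op}$, hence contravariant into $\Pre{\KHaus}$, it reverses composition, so the bijection $\Aut(\fA)\ra\Aut(\SigA)$ obtained above is a contravariant group isomorphism, with inverse the corresponding restriction of $C(-)$.

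The only place demanding care --- rather than any real work --- is the bookkeeping of variances: there are two independent reversals of direction, one from the presheaf/copresheaf variance and one from Gelfand duality itself, and one must check that the ad hoc notions of isomorphism and of group multiplication introduced in the Definitions literally coincide with the categorical ones in $\Cpr{\ucC}$ and $\Pre{\KHaus}$. Once this is set up, the corollary is a formal consequence of Prop. \ref{Prop_DualEquiv} and of the identifications $\Sigma(\fA)=\SigA$, $C(\SigA)=\fA$; there is no substantive obstacle, since the mathematical content has already been absorbed into that proposition.
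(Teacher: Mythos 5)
Your proposal is correct and follows the same route as the paper: the paper derives this corollary by identifying the ad hoc topos-level isomorphisms of $\fA$ and $\SigA$ with categorical isomorphisms in $\Cpr{\ucC}$ and $\Pre{\KHaus}$, and then invoking the local duality of Prop.\ \ref{Prop_DualEquiv} together with the correspondences $\Sigma(\fA)=\SigA$, $C(\SigA)=\fA$, exactly as you do. Your write-up merely makes explicit the variance bookkeeping and the check that the dual equivalence preserves the base map and reverses composition, which the paper leaves implicit.
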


Noting that $\cA_{part}$ and $\cB_{part}$ are the `external' descriptions of the topos-internal abelian algebras $\fA\in\SetCAop$ respectively $\fB\in\SetCBop$ (cf. Rem. \ref{Rem_fAIsPartialAlg}), one can read the proofs of Lemmas \ref{Lem_NatIsoGivesPartialAutom} and \ref{Lem_PartialMorGivesNatIso} as making this correspondence between isomorphisms from $\SigB$ to $\SigA$ and isomorphisms from $\fA$ to $\fB$ explicit.

The following result links automorphisms of a unital $C^*$-algebra $\cA$ and automorphisms of its spectral presheaf $\SigA$:

\begin{proposition}			\label{Prop_RepOfAutcAInAutSigA}
Let $\cA$ be a unital $C^*$-algebra, and let $\SigA$ be its spectral presheaf. There is an injective group homomorphism
\begin{align}
			\Aut(\cA) &\lra \Aut(\SigA)^{\op}\\			\nonumber
			\phi &\lmt \pair{\Phi}{\cG_\phi}=\cG_{\phi}\circ\Phi^*
\end{align}
from the automorphism group of $\cA$ into the opposite group of the automorphism group of $\SigA$. (This is the same as an injective, contravariant group homomorphism from $\Aut(\cN)$ into $\Aut(\SigN)$.) Here, $\pair{\Phi}{\cG_\phi}$ is the automorphism of $\SigA$ induced by $\phi$ as in Section \ref{Sec_MorsAndMaps} (see also Thm. \ref{Thm_TwoFunctors}).
\end{proposition}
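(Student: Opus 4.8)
The plan is to exhibit the assignment $\phi\mapsto\pair{\Phi}{\cG_\phi}$ as a composite of two maps that have essentially already been dealt with: the restriction of $\phi$ to normal elements, $\Aut(\cA)\ra\Aut_{part}(\cA_{part})$, followed by the bijection of Lemma~\ref{Lem_PartialMorGivesNatIso}, which by Prop.~\ref{Prop_IsomsOfSpecPreshsGiveIsomsAsPartialAlgs} and Cor.~\ref{Cor_ContravGroupIso} is a contravariant group isomorphism $\Aut_{part}(\cA_{part})\ra\Aut(\SigA)^{\op}$. The first map is a covariant group homomorphism and the second is an isomorphism, so the composite is a group homomorphism $\Aut(\cA)\ra\Aut(\SigA)^{\op}$; injectivity will come from injectivity of the first factor. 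What must be checked is that this composite really is the map of Section~\ref{Sec_MorsAndMaps}.

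\textbf{Step 1.} For $\phi\in\Aut(\cA)$, restrict $\phi$ to the set of normal elements and call the result $T_\phi$. Since $\phi$ is a unital $*$-isomorphism it sends normal elements to normal elements and preserves the partial operations of $\cA_{part}$, so $T_\phi$ is a morphism of unital partial $C^*$-algebras; its inverse is $T_{\phi^{-1}}$, so $T_\phi\in\Aut_{part}(\cA_{part})$. Clearly $T_{\psi\circ\phi}=T_\psi\circ T_\phi$, so $\phi\mapsto T_\phi$ is a covariant group homomorphism $\Aut(\cA)\ra\Aut_{part}(\cA_{part})$. It is injective: if $T_\phi=\id$, then $\phi$ fixes every self-adjoint element (self-adjoints being normal), and since each $a\in\cA$ equals $a_1+ia_2$ with $a_1,a_2$ self-adjoint, $\phi=\id_\cA$.

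\textbf{Step 2.} By Lemma~\ref{Lem_PartialMorGivesNatIso} applied to the partial automorphism $T_\phi$, we obtain an isomorphism $\pair{\Ga}{\io}:\SigA\ra\SigA$ (an automorphism of $\SigA$) whose base map is $C\mapsto T_\phi|_C(C)=\phi(C)$ and whose components are $\io_C:\SigA_{\phi(C)}\ra\SigA_C$, $\ld\mapsto\ld\circ\phi|_C$. But $C\mapsto\phi(C)$ is exactly the map $\tphi$ of Section~\ref{Sec_MorsAndMaps}, and $\io_C$ is exactly the Gelfand-dual map $\cG_{\phi;C}$ of the $*$-isomorphism $\phi|_C:C\ra\phi(C)$; hence $\pair{\Ga}{\io}$ coincides with the morphism $\pair{\Phi}{\cG_\phi}$ built in Section~\ref{Sec_MorsAndMaps}. (In particular this re-proves that $\pair{\Phi}{\cG_\phi}$ is a genuine automorphism of $\SigA$ in the sense of Def.~\ref{Def_IsomsFromSigAToSigB}: the base map $\tphi$ is an order-isomorphism because $\widetilde{\phi^{-1}}$ is a two-sided inverse for it, and each $\cG_{\phi;C}$ is a homeomorphism because $\phi|_C$ is an isomorphism of unital abelian $C^*$-algebras.) Therefore $\phi\mapsto\pair{\Phi}{\cG_\phi}$ is the composite of the homomorphism $\phi\mapsto T_\phi$ of Step~1 with the map $T\mapsto\pair{\Ga}{\io}$ of Lemma~\ref{Lem_PartialMorGivesNatIso}; the latter is the inverse of the correspondence in Prop.~\ref{Prop_IsomsOfSpecPreshsGiveIsomsAsPartialAlgs}, which by Cor.~\ref{Cor_ContravGroupIso} reverses composition, i.e.\ is a group isomorphism $\Aut_{part}(\cA_{part})\ra\Aut(\SigA)^{\op}$. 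Composing a covariant homomorphism with this isomorphism yields a group homomorphism $\Aut(\cA)\ra\Aut(\SigA)^{\op}$, injective because Step~1 is injective and the second factor is bijective. This is the assertion of the proposition.

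\textbf{Main obstacle.} Steps~1 and~2 are each routine; the one point that needs attention is the variance bookkeeping in Step~2, i.e.\ being sure that the contravariance of $\Phi^*$ and of $\cG_\phi$ in $\tphi$ on the one hand, and the order-reversal in the partial-algebra/spectral-presheaf correspondence (Cor.~\ref{Cor_ContravGroupIso}) on the other, combine so that $\phi\mapsto\pair{\Phi}{\cG_\phi}$ is a homomorphism into $\Aut(\SigA)^{\op}$ rather than into $\Aut(\SigA)$. This is confirmed by unwinding the composition law of Def.~\ref{Def_CatOfPresheaves} together with $T_{\psi\circ\phi}=T_\psi\circ T_\phi$. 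Alternatively, the homomorphism property can be read off directly from the contravariant functoriality of $\ps{\mc S}:\uC\ra\Pre{\KHaus}$ (Thm.~\ref{Thm_TwoFunctors}) specialised to the single object $\cA$, with injectivity argued exactly as in the last sentence of Step~1; I would use whichever route is shorter in context.
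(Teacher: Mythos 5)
Your proof is correct and follows essentially the same route as the paper's: restrict $\phi$ to the partial $C^*$-algebra $\cA_{part}$ and pass through the contravariant group isomorphism of Cor.~\ref{Cor_ContravGroupIso} to land in $\Aut(\SigA)^{\op}$, with injectivity coming from the fact that automorphisms of $\cA$ are determined by their action on self-adjoint (hence normal) elements. Your Step~2, which explicitly verifies that the automorphism produced by Lemma~\ref{Lem_PartialMorGivesNatIso} from $T_\phi$ coincides with the map $\pair{\Phi}{\cG_\phi}$ of Section~\ref{Sec_MorsAndMaps}, is a useful detail the paper leaves implicit, but it does not change the structure of the argument.
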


\begin{proof}
The result is a corollary of what we proved so far. It follows from Prop. \ref{Prop_DualEquiv} and Thm. \ref{Thm_TwoFunctors} (though injectivity would need a separate argument). More explicitly, one can argue: let $\phi:\cA\ra\cA$ be an automorphism. By restricting $\phi$ to the partial $C^*$-algebra $\cA_{part}$, we obtain an automorphism $\phi|_{\cA_{part}}$ of $\cA_{part}$, and by Cor. \ref{Cor_ContravGroupIso}, this corresponds to an automorphism of $\SigA$. By the fact that the groups $\Aut_{part}(\cA_{part})$ and $\Aut(\SigA)$ are contravariantly isomorphic, this gives an injective, contravariant group homomorphism from $\Aut(\cA)$ into $\Aut(\SigA)$ (which is the same as a group homomorphism from $\Aut(\cA)$ into $\Aut(\SigA)^{\op}$).

If $\phi,\,\xi:\cA\ra\cA$ are two distinct automorphisms of $\cA$, then there exists some self-adjoint operator $\hA\in \cA_{part}$ for which $\phi(\hA)\neq\xi(\hA)$, so the two automorphisms $\phi|_{\cA_{part}}$ and $\xi|_{\cA_{part}}$ of the partial $C^*$-algebra $\cA_{part}$ are distinct, too, so the group homomorphism from $\Aut(\cA)$ into $\Aut(\SigA)^{\op}$ is injective.
\end{proof}

Yet, $\Aut(\SigA)$ has more elements than those corresponding to elements of $\Aut(\cA)$. Each $\phi\in\Aut(\SigA)$ induces a unital partial $*$-automorphism $T\in\Aut_{part}(\cA_{part})$ of the partial $C^*$-algebra $\cA_{part}$ of normal elements, but not every unital partial $*$-automorphism $T$ induces a $C^*$-automorphism $\phi\in\Aut(\cA)$.


Since a $C^*$-algebra is norm-closed, the associated Jordan algebra will also be norm-closed, and hence is a \emph{$JB$-algebra} (where the acronym stands for \emph{J}ordan-\emph{B}anach). For the theory of $JB$-algebras, see \cite{AlfShu03} and references therein. There is a category $\JB$ of complex, unital $JB$-algebras and unital Jordan $*$-morphisms. It is easy to check that each unital normal morphism $\phi:\cA\ra\cB$ of von Neumann algebras induces a unital normal Jordan $*$-morphism of the associated $JB$-algebras, so there is a functor
\begin{align}
			\mc J: \ucC &\lra \JB\\			\nonumber
			\cA &\lmt (\cA,\cdot).
\end{align}
Just as for von Neumann algebras and $JBW$-algebras, the functor $\mc J$ is not injective on objects: a unital $C^*$-algebra $\cA$ and its opposite algebra $\cA^o$, with the order of multiplication reversed, are mapped to the same $JB$-algebra $(\cA,\cdot)$.

In the arguments above, we started from an isomorphism $\pair{\Gamma}{\io}:\SigB\ra\SigA$, where the geometric automorphism $\Ga:\SetCAop\ra\SetCBop$ is induced by a base map $\ga:\CA\ra\CB$ and the natural isomorphism $\io:\Ga^*(\SigB)\ra\SigA$ is given as additional data. 

It is also interesting to start just from an order-automorphism (base map) $\ga:\CA\ra\CB$, without assuming that a natural isomorphism $\io:\Ga^*(\SigB)\ra\SigA$ is given. This amounts to asking how much operator-algebraic structure can be reconstructed from the poset $\CA$ of unital abelian $C^*$-subalgebras of a nonabelian $C^*$-subalgebra $\cA$. (For the case of von Neumann algebras, see Thm. \ref{Thm_OrderIsoGivesJordanAutom}.)

There is an interesting recent result by Jan Hamhalter \cite{Ham11} on reconstructing parts of the algebraic structure of a unital $C^*$-algebra $\cA$ from its poset $\CA$ of unital abelian subalgebras. We need a definition first: let $\cA,\,\cB$ be unital $C^*$-algebras, and let $\cA_{\sa}$ be the real unital Jordan algebra of self-adjoint elements in $\cA$, with Jordan product given by
\begin{equation}
			\forall\hA,\hB\in\cA_{\sa}: \hA\cdot\hB:=\frac{1}{2}(\hA\hB+\hB\hA).
\end{equation}
The unital Jordan algebra $\cB_{\sa}$ is defined analogously. Hamhalter defines a \emph{quasi-Jordan homomorphism} to be a unital map
\begin{equation}
			Q:\cA_{\sa} \lra \cB_{\sa}
\end{equation}
such that, for all $C\in\CA$,
\begin{equation}
			Q|_{C_{\sa}}: C_{\sa} \lra \cB_{\sa}
\end{equation}
is a unital Jordan homomorphism. Note that $Q$ is only required to be linear on commuting self-adjoint operators, so it is a quasi-linear map. Moreover, $Q$ preserves the Jordan product on commuting operators (where the Jordan product coincides with the operator product since
\begin{equation}
			\hA\hB=\hB\hA \quad\Longleftrightarrow\quad \hA\cdot\hB=\frac{1}{2}(\hA\hB+\hB\hA)=\hA\hB,
\end{equation}
as we had already observed for the case of von Neumann algebras).

A \emph{quasi-Jordan isomorphism} is a bijective map $Q:\cA_{\sa}\ra\cB_{\sa}$ such that $Q$ and $Q^{-1}$ are quasi-Jordan homomorphisms.

\begin{theorem}			\label{Thm_Hamhalter}
(Hamhalter \cite{Ham11}) Let $\cA,\,\cB$ be unital $C^*$-algebras such that $\cA$ is neither isomorphic to $\bbC^2$ nor to $\mc B(\bbC^2)$. There is an order-isomorphism
\begin{equation}
			\ga:\CA \lra \CB
\end{equation}
if and only if there is a unital quasi-Jordan isomorphism
\begin{equation}
			Q:\cA_{\sa} \lra \cB_{\sa}.
\end{equation}
\end{theorem}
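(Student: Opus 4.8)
The plan is as follows. One implication is routine: if $Q:\cA_\sa\ra\cB_\sa$ is a unital quasi-Jordan isomorphism, then for each $C\in\CA$ the restriction $Q|_{C_\sa}$ is a unital Jordan homomorphism, and since $C$ is abelian the Jordan product on $C_\sa$ is the associative product, so $Q(C_\sa)$ is an associative $JB$-subalgebra of $\cB_\sa$; being the image of the norm-closed space $C_\sa$ under the isometric map $Q|_{C_\sa}$ it is norm-closed, hence the self-adjoint part of a unique unital abelian $C^*$-subalgebra $\ga(C)\subseteq\cB$. As $C'\subseteq C$ forces $Q(C'_\sa)\subseteq Q(C_\sa)$, the map $\ga:C\mt\ga(C)$ is order-preserving, and running the same construction on $Q^{-1}$ gives an order-preserving inverse; so $\ga:\CA\ra\CB$ is an order-isomorphism.

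For the substantial implication, fix an order-isomorphism $\ga:\CA\ra\CB$. The goal is a \emph{coherent} family of unital $*$-isomorphisms $\theta_C:C\ra\ga(C)$, $C\in\CA$, with $\theta_C|_{C'}=\theta_{C'}$ whenever $C'\subseteq C$; from such a family one sets $Q:=\bigcup_{C\in\CA}\theta_C|_{C_\sa}$. This $Q$ is well defined since every self-adjoint $\hat A\in\cA$ lies in the context $C^*(1,\hat A)$ and the $\theta_C$ agree on overlaps; it is unital, bijective (inverse built the same way from $\ga^{-1}$), and for each $C$ one has $Q|_{C_\sa}=\theta_C|_{C_\sa}$, a unital Jordan homomorphism, so, since any two commuting self-adjoint elements lie in a common context, $Q$ is a unital quasi-Jordan isomorphism.

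To construct the $\theta_C$, restrict $\ga$ to principal down-sets: for each $C$, $\ga$ carries the lattice of unital abelian $C^*$-subalgebras of the abelian algebra $C$ isomorphically onto that of $\ga(C)$. Reading such a lattice as the lattice of closed quotients of the Gelfand spectrum, I would use the rigidity of abelian $C^*$-algebras: an order-isomorphism of these subalgebra lattices is always implemented by a $*$-isomorphism $C\ra\ga(C)$, and uniquely so as soon as $\Sigma(C)$ has at least three points, while for $\dim C\le 2$ it is determined at worst up to the flip of a two-point spectrum. Hence $\theta_C$ is forced for every context with $|\Sigma(C)|\ge 3$, and coherence between two such contexts is automatic; the only remaining freedom is the orientation of $\theta_C$ on the two-dimensional contexts $C\cong\bbC^2$, equivalently a consistent matching of the nontrivial projections of $\cA$ with those of $\cB$.

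This is where I expect the real difficulty, and where the hypotheses $\cA\nsim\bbC^2$ and $\cA\nsim\mc B(\bbC^2)$ are indispensable, exactly as in the von Neumann analogue (Thm.~\ref{Thm_OrderIsoGivesJordanAutom}). The $\bbC^2$-contexts are precisely the atoms of $\CA$ and correspond to the nontrivial projections of $\cA$ modulo $p\leftrightarrow 1-p$; by a Harding--Navara-type argument one recovers the order on projections from the way these atoms sit below the $\bbC^3$- and $\bbC^4$-contexts above them (whether a commuting pair $p,q$ is nested rather than in general position is detected by the isomorphism type of the smallest context above both atoms), obtaining a projection-preserving bijection $\mc P(\cA)\ra\mc P(\cB)$. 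One checks that $\cA$ has a two-dimensional \emph{maximal} abelian subalgebra precisely when $\cA\cong\bbC^2$ or $\cA\cong\mc B(\bbC^2)$; away from these two cases every $\bbC^2$-context sits inside a $\bbC^3$-context, the relevant Boolean blocks have more than four elements, and the projection matching pins down the orientations of all $\theta_C$ on the $\bbC^2$-contexts compatibly with the larger contexts above them. A Zorn's-lemma induction over $\CA$ then patches the $\theta_C$ into the required coherent family, and $Q$ follows. Thus the two ancillary inputs---local rigidity of abelian $C^*$-algebras (reconstructing a compact Hausdorff space from its lattice of closed quotients) and the recovery of the projection order from the atoms of $\CA$ (a direct analogue of the von Neumann case treated above)---are comparatively standard; the genuine obstacle is organising the $\theta_C$ into a \emph{globally} coherent family, which is exactly what the projection matching, and hence the exclusion of $\bbC^2$ and $\mc B(\bbC^2)$, is there to secure.
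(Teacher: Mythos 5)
Note first that the paper itself does not prove this statement: it is quoted from Hamhalter \cite{Ham11}, so your sketch must be judged as a free-standing proof. Your easy direction is essentially fine (with the minor caveat that the commutativity of the image $Q(C_{\sa})$ under the operator product is not literally immediate from ``the Jordan product on $C_{\sa}$ is the associative product''; it needs, e.g., St\o rmer's decomposition of Jordan $*$-homomorphisms with abelian domain or a functional-calculus argument). You also correctly identify the two inputs for the hard direction: local rigidity of abelian $C^*$-algebras (a genuine theorem of Mendivil/Hamhalter, reasonable to black-box) and a global coherence problem concentrated at two-dimensional overlaps $C^*(1,p)$, where two uniquely determined maps $\theta_{C_1},\theta_{C_2}$ could a priori differ by the flip.

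The gap is in your resolution of that coherence problem. First, the mechanism you propose rests on a false claim: away from $\bbC^2$ and $\mc B(\bbC^2)$ it is \emph{not} true that every two-dimensional context sits inside a $\bbC^3$-context. What you correctly verify is that a two-dimensional context is never maximal abelian, but the larger contexts above it may contain no further projections at all: for $\cA=\bbC\oplus\cB$ with $\cB$ unital and projectionless (e.g.\ $C([0,1])$ or $C^*_r(F_2)$), the atom $C^*(1,p)$ with $p=(1,0)$ lies below no $\bbC^3$- or $\bbC^4$-context whatsoever. Hence the Harding--Navara-style recovery of the projection order from finite Boolean blocks --- the engine of the von Neumann argument behind Thm.~\ref{Thm_OrderIsoGivesJordanAutom}, where projections abound and blocks are large --- has nothing to work with in a general $C^*$-algebra: the coherence data lives in contexts with large, possibly connected spectra, not in finite blocks. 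Second, even where projections are plentiful, the decisive step is asserted rather than proved: producing ``a consistent matching of the nontrivial projections'' \emph{is} the orientation problem (an atom only remembers the unordered pair $\{p,1-p\}$), so saying that the projection matching ``pins down the orientations'' is circular, and the concluding ``Zorn's-lemma induction patches the $\theta_C$'' is vacuous --- the obstruction is a well-definedness/consistency issue, not a maximality issue, since two contexts with at least three points in their spectra can intersect in exactly $C^*(1,p)$, with no chain of automatic coherences connecting them. Proving that $\theta_{C_1}(p)=\theta_{C_2}(p)$ in this situation is precisely the content of Hamhalter's theorem and requires a genuinely different analysis of the order structure above a two-dimensional context; your sketch assumes it at this point. (The generalised Gleason theorem is not relevant here; it enters only for the later upgrade from quasi-Jordan to Jordan maps as in Cor.~\ref{Cor_HamCStarJordan}.)
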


The unital quasi-Jordan algebra $\cA_{\sa}$ is closely related to our unital partial $C^*$-algebra $\cA_{part}$, it is simply the self-adjoint part of it. Moreover, each isomorphism $T:\cA_{part}\ra\cB_{part}$ obviously restricts to a unital quasi-Jordan isomorphism $T|_{\cA_{\sa}}:\cA_{\sa}\ra\cA_{\sa}$. Conversely, every unital quasi-Jordan automorphism $Q:\cA_{\sa}\ra\cB_{\sa}$ extends to an isomorphism $Q:\cA_{part}\ra\cB_{part}$ of partial unital $C^*$-algebras by linearity (on commuting operators). If $\cA=\cB$, there is a group isomorphism
\begin{equation}
			\Aut_{quJord}(\cA_{\sa}) \lra \Aut_{part}(\cA_{part}),
\end{equation}
where $\Aut_{quJord}(\cA_{sa})$ is the group of quasi-Jordan automorphisms of $\cA_{\sa}$. 

We reformulate Thm. \ref{Thm_Hamhalter} as

\begin{theorem}			\label{Thm_HamhalterVersion}
Let $\cA,\cB$ be unital $C^*$-algebras that are neither isomorphic to $\bbC^2$ nor to $\mc B(\bbC^2)$. There are bijective correspondences between the set of order-isomorphisms $\ga:\CA\ra\CB$, the set of unital quasi-Jordan isomorphisms $Q:\cA_{\sa}\ra\cB_{\sa}$, and the set of partial unital $*$-isomorphisms $T:\cA_{part}\ra\cB_{part}$.
\end{theorem}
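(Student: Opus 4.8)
The plan is to assemble the three-way correspondence out of two separate bijections, $Q\leftrightarrow T$ between unital quasi-Jordan isomorphisms and partial unital $*$-isomorphisms, and $T\leftrightarrow\ga$ between partial unital $*$-isomorphisms and order-isomorphisms of context posets; composing them gives $\ga\leftrightarrow Q$. The essential input is Hamhalter's theorem (Thm.~\ref{Thm_Hamhalter}), upgraded from its bare existence form to a bijection of sets.

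First I would record the bijection $Q\leftrightarrow T$, which is essentially laid out in the discussion preceding the theorem. On each context $C\in\CA$ the Jordan product agrees with the operator product on commuting pairs, so a partial unital $*$-isomorphism $T:\cA_{part}\ra\cB_{part}$ restricts to a unital quasi-Jordan isomorphism $T|_{\cA_{\sa}}$; conversely a unital quasi-Jordan isomorphism $Q$ restricts on each context to a $*$-isomorphism $C_{\sa}\ra Q[C_{\sa}]$ and therefore extends to commuting normal elements by $\hA+i\hB\mapsto Q(\hA)+iQ(\hB)$, the extension being a partial unital $*$-isomorphism with $(T)|_{\cA_{\sa}}=Q$. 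The two assignments are visibly mutually inverse. (Injectivity of $Q$ on $C_{\sa}$ together with the fact that injective unital Jordan homomorphisms between $JB$-algebras are isometric shows $Q[C_{\sa}]$ is norm-closed, so $\bbC 1+Q[C_{\sa}]+iQ[C_{\sa}]$ really is an abelian $C^*$-subalgebra of $\cB$.)

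Next, the map $T\mapsto\ga_T$, where $\ga_T(C):=T[C]$. Conditions (2) and (3) make $T[C]$ a commuting, involution-closed, norm-closed subset, hence an element of $\CB$; monotonicity is immediate, and $\ga_{T^{-1}}$ is a two-sided inverse, so $\ga_T$ is an order-isomorphism. The real content is that $T\mapsto\ga_T$ is a bijection onto the set of all order-isomorphisms $\CA\ra\CB$. For surjectivity I would start from an order-isomorphism $\ga$, invoke Thm.~\ref{Thm_Hamhalter} to obtain a quasi-Jordan isomorphism $Q:\cA_{\sa}\ra\cB_{\sa}$, and — extracting this from the proof of Thm.~\ref{Thm_Hamhalter} rather than from its statement — use that Hamhalter builds $Q$ context by context along $\ga$, so that $Q[C_{\sa}]=\ga(C)_{\sa}$ for all $C$; extending $Q$ to $T$ as above then yields $\ga_T=\ga$. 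For injectivity, if $\ga_T=\ga_{T'}$ then $S:=T^{-1}\circ T'$ is a partial unital $*$-automorphism of $\cA_{part}$ with $\ga_S=\ga_{T^{-1}}\circ\ga_{T'}=\id_{\CA}$, i.e.\ $S[C]=C$ for every context $C$, and I would conclude $S=\id$, whence $T'=T$.

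The main obstacle is precisely this last reduction — that a partial unital $*$-automorphism fixing every abelian $C^*$-subalgebra setwise is the identity — together with the compatibility of Hamhalter's $Q$ with $\ga$; both are where the exclusions $\cA,\cB\nsim\bbC^2,\mc B(\bbC^2)$ enter. For a maximal abelian $C\cong C(K)$, $S|_C$ is a homeomorphism of $K$ preserving every closed quotient of $K$; when $|K|\ge 3$ the pair-merging subalgebras $\{f\in C(K):f(x)=f(y)\}$, ranged over all $x,y\in K$, force $S|_C=\id$, whereas $|K|\le 2$ is exactly the obstruction realised by $\bbC^2$ itself and by the maximal abelian subalgebras of $\mc B(\bbC^2)=M_2(\bbC)$ — the same $4$-element-block phenomenon responsible for the corresponding exclusions in Thm.~\ref{Thm_OrderIsoGivesJordanAutom}. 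Since $\cA_{\sa}=\bigcup_{C}C_{\sa}$, running this over all maximal abelian $C$ gives $S=\id$; the same rigidity underlies the canonicity of Hamhalter's construction, so in practice both the compatibility and the uniqueness are cleanest to deduce from the internals of the proof of Thm.~\ref{Thm_Hamhalter}. Composing the bijections $Q\leftrightarrow T$ and $T\leftrightarrow\ga$ then yields the asserted three-way correspondence, and restricting to $\cA=\cB$ makes all three into the group isomorphisms $\Aut_{ord}(\CA)\cong\Aut_{quJord}(\cA_{\sa})\cong\Aut_{part}(\cA_{part})$ already anticipated before the theorem.
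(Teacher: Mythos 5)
Your proposal is correct and follows essentially the same decomposition as the paper: the bijection between unital quasi-Jordan isomorphisms $Q:\cA_{\sa}\ra\cB_{\sa}$ and partial unital $*$-isomorphisms $T:\cA_{part}\ra\cB_{part}$ is obtained exactly as in the discussion preceding the theorem (restriction to the self-adjoint part, extension by complex linearity on commuting elements), and the correspondence with order-isomorphisms $\ga:\CA\ra\CB$ is delegated to Hamhalter's theorem. Where you genuinely diverge is the injectivity/uniqueness half of $T\leftrightarrow\ga$: the paper takes this entirely from Hamhalter's result (whose full form says each order-isomorphism is induced by a \emph{unique} quasi-Jordan isomorphism, a fact the paper also invokes later via Cor.~\ref{Cor_HamCStarJordan}), whereas you give a self-contained rigidity argument --- a partial $*$-automorphism fixing every context setwise restricts, on each maximal abelian $C\cong C(K)$, to a homeomorphism of $K$ preserving all pair-merging subalgebras $\{f: f(x)=f(y)\}$, hence is the identity once $|K|\geq 3$, and only $\bbC^2$ and $\mc B(\bbC^2)$ have maximal abelian subalgebras with two-point spectrum. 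That buys independence from the internals of Hamhalter's proof for the uniqueness direction, at the price of one unproved (but correct, and provable by a short corner argument) claim about two-point maximal abelian spectra; for surjectivity-with-compatibility (that the $Q$ supplied by Hamhalter actually induces the given $\ga$) you still rely, as the paper implicitly does, on the stronger ``induced by'' form of Hamhalter's theorem rather than the bare existence statement quoted as Thm.~\ref{Thm_Hamhalter} --- and you are right to flag that the quoted iff-statement alone would not yield a bijection of sets.
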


\begin{corollary}			\label{Cor_HamhalterVersion}
Let $\cA$ be a unital $C^*$-algebra that is neither isomorphic to $\bbC^2$ nor to $\mc B(\bbC^2)$. There are group isomorphisms
\begin{equation}
			\Aut_{ord}(\CA) \lra \Aut_{quJord}(\cA_{\sa}) \lra \Aut_{part}(\cA_{part}).
\end{equation}
\end{corollary}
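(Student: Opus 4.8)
The plan is to obtain the Corollary by specialising Theorem \ref{Thm_HamhalterVersion} to $\cB=\cA$ and then checking that the three resulting bijections are group isomorphisms, i.e.\ that they respect composition and fix the identity. Since the set-theoretic inverse of a bijective group homomorphism is again a group homomorphism, it suffices in each case to verify the homomorphism property for one of the two mutually inverse maps.

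First I would treat the correspondence between $\Aut_{quJord}(\cA_{\sa})$ and $\Aut_{part}(\cA_{part})$. As already observed in the text preceding Theorem \ref{Thm_HamhalterVersion}, a partial unital $*$-automorphism $T\colon\cA_{part}\to\cA_{part}$ restricts to a quasi-Jordan automorphism $T|_{\cA_{\sa}}$ of $\cA_{\sa}$, and conversely a quasi-Jordan automorphism $Q$ of $\cA_{\sa}$ extends uniquely --- complex-linearly on each pair of commuting normal elements, via $\hat B=\hat A_1+i\hat A_2$ --- to a partial unital $*$-automorphism of $\cA_{part}$; these two assignments are mutually inverse. Restriction is visibly multiplicative, $(T_1\circ T_2)|_{\cA_{\sa}}=T_1|_{\cA_{\sa}}\circ T_2|_{\cA_{\sa}}$, and sends $\id_{\cA_{part}}$ to $\id_{\cA_{\sa}}$, so it is a group isomorphism, and its inverse is the asserted isomorphism $\Aut_{quJord}(\cA_{\sa})\to\Aut_{part}(\cA_{part})$.

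Next I would treat the correspondence between $\Aut_{ord}(\CA)$ and $\Aut_{part}(\cA_{part})$. Given a partial unital $*$-automorphism $T$ of $\cA_{part}$, the remarks in Section \ref{Sec_PreshCopreshLocDuality} --- condition (2) forces $T$ to preserve commutativity and condition (3) forces the image of a unital abelian $C^*$-subalgebra to be norm-closed --- show that $T(C)\in\CA$ for every $C\in\CA$; hence $\ga_T\colon C\mapsto T(C)$ is an order-preserving self-map of $\CA$ with order-preserving inverse $\ga_{T^{-1}}$, so $\ga_T\in\Aut_{ord}(\CA)$. One computes $\ga_{T_1\circ T_2}(C)=T_1(T_2(C))=\ga_{T_1}(\ga_{T_2}(C))$ and $\ga_{\id}=\id_{\CA}$, so $T\mapsto\ga_T$ is a group homomorphism $\Aut_{part}(\cA_{part})\to\Aut_{ord}(\CA)$. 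This is precisely the correspondence of Theorem \ref{Thm_HamhalterVersion} (it agrees with the base map built in Lemma \ref{Lem_PartialMorGivesNatIso}), and that theorem --- the one place the hypothesis $\cA\nsim\bbC^2$, $\mc B(\bbC^2)$ is used --- tells us it is a bijection; hence it is a group isomorphism. Composing its inverse with the isomorphism of the previous paragraph yields $\Aut_{ord}(\CA)\cong\Aut_{quJord}(\cA_{\sa})$, completing the chain.

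I expect the only real obstacle to be bookkeeping rather than analysis: one must confirm that the abstract bijections guaranteed by Theorem \ref{Thm_HamhalterVersion} coincide with the concrete, manifestly composition-preserving maps used above --- in particular that Hamhalter's passage from an order-isomorphism to a quasi-Jordan isomorphism, followed by linear extension to the partial algebra, really does invert $T\mapsto\ga_T$. Once that identification is pinned down, every step is formal and needs no input beyond Theorems \ref{Thm_Hamhalter} and \ref{Thm_HamhalterVersion}.
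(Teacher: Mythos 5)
Your proposal is correct and follows essentially the same route as the paper: the paper treats the corollary as an immediate consequence of Thm. \ref{Thm_HamhalterVersion} together with the restriction/extension correspondence between $\Aut_{quJord}(\cA_{\sa})$ and $\Aut_{part}(\cA_{part})$ stated just before it, which is exactly the chain you spell out. Your explicit verification that the bijections respect composition (and that $T\mapsto\ga_T$ agrees with the base map of Lemma \ref{Lem_PartialMorGivesNatIso}) is just the bookkeeping the paper leaves implicit.
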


This shows that if $\cA$ is neither isomorphic to $\bbC^2$ nor to $\mc B(\bbC^2)$, then every base map $\ga:\CA\ra\CA$ induces a unique automorphism of the unital partial $C^*$-algebra $\cA_{part}$. From Prop. \ref{Prop_IsomsOfSpecPreshsGiveIsomsAsPartialAlgs}, Thm. \ref{Thm_HamhalterVersion} and the other results in this subsection, we obtain

\begin{theorem}			\label{Thm_SpecPresheafDeterminesPartialUnitalCStarAlg}
Let $\cA,\,\cB$ be unital $C^*$-algebras which are neither isomorphic to $\bbC^2$ nor to $\mc B(\bbC^2)$, with associated $JB$-algebras also denoted $\cA,\,\cB$, spectral presheaves $\SigA$, $\SigB$, context categories $\CA,\,\CB$, and partial unital $C^*$-algebras $\cA_{part}$, $\cB_{part}$, respectively. Every isomorphism $\SigB\ra\SigA$ of the spectral presheaves induces a unique quasi-Jordan $*$-isomorphism $T:\cA\ra\cB$ (in the opposite direction). Conversely, every quasi-Jordan $*$-isomorphism $T:\cA\ra\cB$ induces a unique isomorphism $\pair{\Ga}{\io}:\SigB\ra\SigA$ of the spectral presheaves (in the opposite direction). Moreover, every quasi-Jordan $*$-isomorphism $T:\cA\ra\cB$ gives an isomorphism $\pair{\tilde\Ga}{\kappa}:\fA\ra\fB$ in the category $\Cpr{\ucC}$ of unital commutative $C^*$-algebra-valued presheaves from the Bohrification of $\cA$ to the Bohrification of $\cB$ and vice versa, which induces an order-isomorphism $\ga:\CA\ra\CB$ of the context categories and vice versa. Here, $\ga$ is the base map underlying the essential geometric morphism $\pair{\Ga}{\io}:\SigB\ra\SigA$ corresponding to the quasi-Jordan $*$-isomorphism $T:\cA\ra\cB$.
\end{theorem}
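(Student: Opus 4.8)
The plan is to obtain the theorem by concatenating bijective correspondences already in hand, so that essentially no new argument is needed beyond bookkeeping. First I would start from an isomorphism $\pair{\Ga}{\io}:\SigB\ra\SigA$ of spectral presheaves and apply Prop.~\ref{Prop_IsomsOfSpecPreshsGiveIsomsAsPartialAlgs}: this yields a unique isomorphism $T:\cA_{part}\ra\cB_{part}$ of the associated partial unital $C^*$-algebras, and, unwinding Lemma~\ref{Lem_NatIsoGivesPartialAutom} and Lemma~\ref{Lem_PartialMorGivesNatIso}, the base map $\ga:\CA\ra\CB$ underlying $\Ga$ is $C\mapsto T|_C(C)$, while the component $\io_C$ is the Gelfand dual of the abelian $C^*$-isomorphism $T|_C:C\ra\ga(C)$. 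Next, because $\cA$ and $\cB$ are neither isomorphic to $\bbC^2$ nor to $\mc B(\bbC^2)$, Thm.~\ref{Thm_HamhalterVersion} applies: it gives mutually inverse bijections among order-isomorphisms $\ga:\CA\ra\CB$, unital quasi-Jordan isomorphisms $\cA_{\sa}\ra\cB_{\sa}$, and partial unital $*$-isomorphisms $T:\cA_{part}\ra\cB_{part}$ (the last two being the same datum, since a partial $*$-isomorphism restricts to a quasi-Jordan isomorphism of self-adjoint parts and conversely extends by $\bbC$-linearity on commuting operators). Composing the two correspondences produces the desired bijection between isomorphisms $\pair{\Ga}{\io}:\SigB\ra\SigA$ and quasi-Jordan $*$-isomorphisms $T:\cA\ra\cB$, together with the uniqueness statements: fixing the base map $\ga$ forces $T$ by Thm.~\ref{Thm_HamhalterVersion}, which forces $\io$ by Lemma~\ref{Lem_PartialMorGivesNatIso}. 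This is exactly where the hypothesis excluding $\bbC^2$ and $\mc B(\bbC^2)$ is indispensable, as Hamhalter's theorem fails otherwise.

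For the Bohrification leg I would invoke Cor.~\ref{Cor_AutSigAContravIsomToAutolcA}, itself a consequence of the local Gelfand duality of Prop.~\ref{Prop_DualEquiv}: an isomorphism $\pair{\tilde\Ga}{\kappa}:\fA\ra\fB$ in $\Cpr{\ucC}$ corresponds to an isomorphism $\pair{\Ga}{\io}:\SigB\ra\SigA$ in the opposite direction, with $\io_C=\Sigma(\kappa_C)$. Since $\kappa_C=T|_C$ under the canonical Gelfand identifications, this correspondence is governed by the same base map $\ga$ and the same family of abelian $C^*$-isomorphisms as the one above, so it glues onto the chain without conflict. The order-isomorphism assertion is then nothing more than the $\CA\leftrightarrow\CB$ entry of Thm.~\ref{Thm_HamhalterVersion}, read off $\ga$, and the final sentence of the theorem (that $\ga$ is the base map of $\Ga$) is immediate from the construction.

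The only genuine point to verify is coherence: that the base map produced from $\pair{\Ga}{\io}$ by Prop.~\ref{Prop_IsomsOfSpecPreshsGiveIsomsAsPartialAlgs} is literally the base map that Thm.~\ref{Thm_HamhalterVersion} attaches to the induced $T:\cA_{part}\ra\cB_{part}$, so that the two bijections assemble into a single commuting web of correspondences rather than two unrelated ones. I expect this---together with the routine identification of $\cA_{\sa}$-data with $\cA_{part}$-data---to be the main (and essentially only) obstacle, and it is resolved by definition, since in both constructions the base map sends $C$ to $T|_C(C)$. The proof then concludes by noting that every arrow in the chain has already been shown to be invertible with the stated inverse, so the composite correspondences are bijections, and each quasi-Jordan $*$-isomorphism $T:\cA\ra\cB$ recovers a unique $\pair{\Ga}{\io}$, a unique $\pair{\tilde\Ga}{\kappa}$, and a unique $\ga$.
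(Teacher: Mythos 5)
Your proposal is correct and follows exactly the paper's route: the theorem is obtained by concatenating Prop.~\ref{Prop_IsomsOfSpecPreshsGiveIsomsAsPartialAlgs} (via Lemmas~\ref{Lem_NatIsoGivesPartialAutom} and \ref{Lem_PartialMorGivesNatIso}), Thm.~\ref{Thm_HamhalterVersion} (Hamhalter, which is where the exclusion of $\bbC^2$ and $\mc B(\bbC^2)$ enters), and the Bohrification correspondence of Cor.~\ref{Cor_AutSigAContravIsomToAutolcA}, with the coherence of the base maps holding by construction since both send $C$ to $T|_C(C)$. Your explicit check of that coherence and of the identification of $\cA_{\sa}$-data with $\cA_{part}$-data is the only bookkeeping the paper leaves implicit.
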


Note that this result is weaker than the one for von Neumann algebras (Thm. \ref{Thm_VNAs}), since isomorphisms of the spectral presheaves only determine quasi-Jordan $*$-isomorphisms, or equivalently, isomorphisms of unital partial $C^*$-algebras.

But, crucially, also for a unital $C^*$-algebra $\cA$ (not isomorphic to $\bbC^2$ or $\mc B(\bbC^2)$), every base map $\ga:\CA\ra\CA$ induces a unique automorphism of the spectral presheaf $\SigA$ of $\cA$, so the spectral presheaf is `rigid' in the sense that it has exactly as many automorphisms as the underlying base category $\CA$.

\begin{corollary}			\label{Cor_GroupIsosCStar}
Let $\cA$ be a unital $C^*$-algebra that is neither isomorphic to $\bbC^2$ nor to $\mc B(\bbC^2)$. The four groups $\Aut_{ord}(\CA)$, $\Aut_{quJord}(\cA_{\sa})$, $\Aut_{part}(\cA_{part})$ and $\Aut{\fA}$ are isomorphic, and these groups are contravariantly isomorphic to the group $\Aut(\SigA)$ of automorphisms of the spectral presheaf $\SigA$ of $\cA$.
\end{corollary}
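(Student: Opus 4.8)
The plan is to assemble the corollary from the results already proved in this subsection, treating it essentially as a bookkeeping exercise in composing group isomorphisms while keeping track of variance. First I would invoke Cor.~\ref{Cor_HamhalterVersion}, which (for $\cA$ neither isomorphic to $\bbC^2$ nor to $\mc B(\bbC^2)$) already supplies covariant group isomorphisms $\Aut_{ord}(\CA)\cong\Aut_{quJord}(\cA_{\sa})\cong\Aut_{part}(\cA_{part})$; this disposes of three of the four groups at once, and these in turn rest on Thm.~\ref{Thm_Hamhalter} together with the remarks identifying $\cA_{\sa}$ with the self-adjoint part of $\cA_{part}$.

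Next I would bring in $\Aut(\fA)$. By Cor.~\ref{Cor_AutSigAContravIsomToAutolcA} there is a contravariant group isomorphism $\Aut(\SigA)\to\Aut(\fA)$, and by Cor.~\ref{Cor_ContravGroupIso} (equivalently, the case $\cA=\cB$ of Prop.~\ref{Prop_IsomsOfSpecPreshsGiveIsomsAsPartialAlgs}) there is a contravariant group isomorphism $\Aut(\SigA)\to\Aut_{part}(\cA_{part})$. Composing the inverse of the latter with the former yields a \emph{covariant} group isomorphism $\Aut_{part}(\cA_{part})\cong\Aut(\fA)$, so that all four groups are mutually isomorphic as (abstract, covariant) groups. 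Here I would double-check, using the explicit descriptions in Lemmas~\ref{Lem_NatIsoGivesPartialAutom} and~\ref{Lem_PartialMorGivesNatIso}, that these two contravariant isomorphisms are built from the same data---the same base map $\ga$ and the same family $\kappa=(\kappa_C)_{C\in\CA}$ of $C^*$-isomorphisms of abelian subalgebras---so the composite is the obvious map and no inconsistency enters through a stray inversion.

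Finally, the contravariant isomorphism with $\Aut(\SigA)$ is exactly Cor.~\ref{Cor_ContravGroupIso}; transporting it along the covariant isomorphisms of the previous two paragraphs produces contravariant isomorphisms from $\Aut(\SigA)$ onto each of $\Aut_{ord}(\CA)$, $\Aut_{quJord}(\cA_{\sa})$ and $\Aut(\fA)$ as well. I would record the composite explicitly: an automorphism $\pair{\Ga}{\io}$ of $\SigA$ has an underlying base map $\ga\in\Aut_{ord}(\CA)$, and (with the variance convention of Def.~\ref{Def_IsomsFromSigAToSigB}) $\ga\mapsto\pair{\Ga}{\io}$, where $\io$ is the unique natural isomorphism over $\ga$ granted by Thm.~\ref{Thm_SpecPresheafDeterminesPartialUnitalCStarAlg}, is the inverse of this map; this also re-derives the rigidity statement that $\SigA$ has exactly as many automorphisms as $\CA$ has order-automorphisms.

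The only real subtlety---and the step I expect to need the most care---is keeping the variances straight: Prop.~\ref{Prop_RepOfAutcAInAutSigA} and Cor.~\ref{Cor_ContravGroupIso} phrase the $\SigA$-side maps contravariantly (an automorphism of $\SigA$ acts as $\io\circ\Ga^*$, with $\Ga^*$ pulling back along $\ga$), whereas Cor.~\ref{Cor_HamhalterVersion} is covariant, so one must verify that the composite of the two contravariant isomorphisms through $\Aut(\SigA)$ is genuinely covariant and bijective, not merely a homomorphism in the right direction. Since every individual arrow in the chain is already known from the cited results to be a bijection compatible with composition, no new analytic or operator-algebraic input is required; the argument is a finite diagram-chase assembling Cor.~\ref{Cor_HamhalterVersion}, Cor.~\ref{Cor_ContravGroupIso} and Cor.~\ref{Cor_AutSigAContravIsomToAutolcA}.
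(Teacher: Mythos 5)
Your proposal is correct and matches the paper's intended argument: the corollary is assembled exactly as you describe, by combining Cor.~\ref{Cor_HamhalterVersion} with the contravariant isomorphisms of Cor.~\ref{Cor_ContravGroupIso} and Cor.~\ref{Cor_AutSigAContravIsomToAutolcA}, the composite of the two contravariant maps giving the covariant identification of $\Aut_{part}(\cA_{part})$ with $\Aut(\fA)$. Your attention to the variance bookkeeping is exactly the point the paper leaves implicit, and no further input is needed.
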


For a certain class of unital $C^*$-algebras (which is strictly larger than the class of von Neumann algebras not isomorphic to $\bbC^2$ and without type $I_2$ summand), a stronger result can be obtained and the spectral presheaf $\SigA$ can be shown to determine the algebra $\cA$ up to Jordan $*$-isomorphisms. 

By a standard argument going back to Kadison \cite{Kad51}, a \emph{linear} unital quasi-Jordan automorphism $T:\cA_{sa}\ra\cA_{\sa}$ is in fact a Jordan automorphism, that is, it preserves the Jordan product also on noncommuting operators $\hA,\,\hB\in\cA_{\sa}$.\footnote{We note in passing that this implies that every partial unital $C^*$-algebra (respectively partial von Neumann algebra) for which addition is defined globally and not just between commuting elements is a unital Jordan algebra, and more precisely a $JB$-algebra (respectively $JBW$-algebra).} The reason is that for all $\hA,\,\hB\in\cA_{\sa}$, the Jordan product can be written as
\begin{equation}
			\hA\cdot\hB = \frac{1}{2}((\hA+\hB)^2-\hA^2-\hB^2).
\end{equation}
Then, using linearity of $T$, one easily sees that $T(\hA\cdot\hB)=T(\hA)\cdot T(\hB)$.

Following \cite{Ham11}, the question hence is: which unital quasi-linear Jordan automorphisms $\cA_{\sa}\ra\cA_{\sa}$ are linear? The problem which quasi-linear maps from an operator algebra into the complex numbers (or, more generally, into a Banach space) are linear has attracted substantial efforts by many authors and eventually led to the following deep result by Bunce and Wright:

\begin{theorem}
(Generalised Gleason Theorem, Bunce and Wright \cite{BunWri95}) Let $\cN$ be a von Neumann algebra with no type $I_2$ summand, and let $X$ be a Banach space. Let $\cA$ be a $C^*$-algebra that is a quotient of a norm-closed two-sided ideal $I$ in $\cN$. Suppose that $T:\cN_{\sa}\ra X$ is a quasi-linear map that is bounded on the unit ball. Then $T$ is linear.
\end{theorem}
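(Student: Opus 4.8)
The plan is to reduce, in three stages, to the classical Gleason argument. \emph{First}, since $X^*$ separates the points of $X$ and, for each $\phi\in X^*$, the composite $\phi\circ T:\cA_\sa\ra\bbR$ is again quasi-linear and bounded on the unit ball, it suffices to treat the scalar-valued case $X=\bbR$. \emph{Second}, writing $\cA=I/J$ with $I$ a norm-closed two-sided ideal of $\cN$ and $J$ a closed ideal of $I$ (hence of $\cN$), and $q:I\ra\cA$ the quotient map, the composite $T\circ q:I_\sa\ra\bbR$ is quasi-linear and bounded on the unit ball of $I$ (because $q$ maps the unit ball of $I$ onto that of $\cA$), and linearity of $T\circ q$ forces linearity of $T$ since $q$ is a surjective linear map. \emph{Third}, every positive element of $I$ is a norm-limit of finite real-linear combinations of mutually orthogonal projections of $I$ (spectral projections of $\hA\in I_+$ for spectral sets bounded away from $0$ lie in $I$), so the bounded quasi-linear functional $T\circ q$, which is norm-continuous on $I_\sa$, is determined by its restriction $\mu$ to the projection lattice $\cP(I)$, a bounded finitely additive real measure; linearity of $T\circ q$ is then equivalent to $\mu$ extending to a bounded linear functional on $I$. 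The problem has thus been recast as a Mackey--Gleason-type extension problem for projection measures on a von Neumann algebra without type $I_2$ summand.

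Next I would use the type decomposition of $\cN$. A quasi-linear functional splits as a bounded sum along any central direct-sum decomposition (orthogonal central projections commute), so it is enough to settle, separately, (i) the properly infinite part, (ii) each finite homogeneous type $I_n$ part with $n\geq 3$, and (iii) the finite type $II_1$ part; the type $I_2$ part is excluded by hypothesis, and an abelian (type $I_1$) summand is trivial because all of its elements commute, so quasi-linearity there is already linearity. For (i) I would invoke the halving technique: in a properly infinite algebra every projection $\hP$ decomposes as $\hP=\hP_1+\hP_2$ with $\hP_1\perp\hP_2$ and $\hP_1\sim\hP_2\sim\hP$, and iterating this decomposition together with the abundant supply of implementing partial isometries produces the ``$90^{\circ}$-rotation'' identities that upgrade finite additivity on orthogonal pairs to additivity on arbitrary pairs. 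This is essentially Christensen's theorem for properly infinite von Neumann algebras, which I would either cite or reprove along these lines.

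For (ii), the finite type $I_n$ part is of the form $L^\infty(\Omega)\overline{\otimes}M_n(\bbC)$ with $n\geq 3$. Here I would first establish the matricial Gleason theorem---a bounded quasi-linear functional on $M_n(\bbC)$, $n\geq 3$, is linear---by reducing, via the many unital copies of $M_3(\bbC)$ inside $M_n(\bbC)$, to Gleason's original continuity-of-frame-functions argument on $M_3(\bbC)$; and then I would have to assemble the resulting fibrewise linear functionals into a single bounded linear functional on $L^\infty(\Omega)\overline{\otimes}M_n(\bbC)$, i.e. carry out a measurable selection and gluing over the spectrum of the centre. This measurable gluing is the first genuine technical obstacle, and it is handled by tracking $\mu$ on a fixed system of matrix units and exploiting the boundedness together with the centre-valued structure.

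The hard part, however, is (iii): in the finite continuous type $II_1$ case there are neither minimal projections nor matrix units to anchor a Gleason argument. Here I would instead use that $\hat 1$ can be approximated, in a suitable sense, by the unit of a type $I_{2^k}$ unital subalgebra (using that every projection can be halved arbitrarily finely), apply (ii) inside that subalgebra to get linearity there, and control the error between the subalgebra and all of $\cN$ using the boundedness of $\mu$ and the norm-continuity established earlier. Making these approximations converge and interpolating additivity between non-commuting projections is, historically, the step that resisted solution longest and was finally settled by Bunce and Wright; I expect the bulk of the work to sit here. Once (i)--(iii) are in place, the functional is bounded and linear on each central summand, hence bounded and linear on $\cN$; restricting to $I$ and descending through $q$ yields the linearity of the original $T$.
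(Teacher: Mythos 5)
First, a point of comparison: the paper does not prove this statement at all --- it is imported as a deep external result of Bunce and Wright \cite{BunWri95}, quoted only so that Hamhalter's Cor.\ 3.6 can be invoked afterwards. So there is no internal proof to measure your attempt against; the only question is whether your sketch stands on its own, and it does not. Your opening reductions are sound (composing with elements of $X^*$ to reach the scalar case, pulling back through the quotient map $q$, and noting that a bounded quasi-linear functional is determined by its restriction to projections via norm-continuity on each abelian subalgebra; you also sensibly read the domain as the quotient algebra, which is the intended meaning of the slightly garbled statement). But the recasting as ``a Mackey--Gleason extension problem on a von Neumann algebra'' is not accurate: $I$ is only a norm-closed two-sided ideal (think of the compact operators inside $\mc B(\cH)$), so it is in general not a von Neumann algebra, its projection lattice is far poorer than that of its weak closure, and the central type decomposition, halving of projections and matrix-unit arguments you then invoke are simply not available inside $I$ itself. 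Transferring the problem from a quotient of an ideal to $\cN$ and back is precisely the extra content of the Bunce--Wright formulation that Hamhalter needs, and your sketch passes over it.

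Second, and more seriously, at the decisive step --- the type $II_1$ case --- you write that this ``was finally settled by Bunce and Wright,'' i.e.\ you appeal to the very theorem you are supposed to prove; the properly infinite case is likewise delegated to Christensen without argument, and the measurable gluing over the centre in the type $I_n$ case is acknowledged as an obstacle but not carried out. What you have produced is a broadly correct roadmap of how the Mackey--Gleason literature organises the proof, not a proof: every genuinely hard step is outsourced either to prior literature or to the result itself. That mirrors what the paper does by citation and would be fine as an expository remark, but as a self-contained proof proposal it is circular at its core.
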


As a corollary, Hamhalter obtains

\begin{corollary}			\label{Cor_HamCStarJordan}
(Cor. 3.6, \cite{Ham11}) Let $\cN$ be a von Neumann algebra with no type $I_2$ summand, and let $\cA$ be a $C^*$-algebra that is an at least three-dimensional quotient of an ideal algebra $\hat 1+I$, where $I$ is a norm-closed two-sided ideal in $\cN$. Let $\cB$ be a $C^*$-algebra. For each order-isomorphism $\ga:\CA\ra\CB$, there is a unique Jordan isomorphism $T:\cA_{\sa}\ra\cB_{\sa}$ that induces $\ga$.
\end{corollary}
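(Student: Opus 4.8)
The plan is to combine three ingredients that are all already available: Hamhalter's reconstruction theorem (Thm.~\ref{Thm_Hamhalter}, in the form of Thm.~\ref{Thm_HamhalterVersion}), the Generalised Gleason Theorem of Bunce and Wright \cite{BunWri95}, and Kadison's observation --- recalled just above --- that a \emph{linear} unital quasi-Jordan map is automatically a Jordan homomorphism.

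First I would check that the hypotheses on $\cA$ exclude the two exceptional cases of Thm.~\ref{Thm_Hamhalter}. Since $\cA$ is at least three-dimensional it is not isomorphic to $\bbC^2$; and $\cA$ is not isomorphic to $\mc B(\bbC^2)$ either, because a quotient of the ideal algebra $\hat 1+I$ isomorphic to $\mc B(\bbC^2)$ would --- by simplicity of $\mc B(\bbC^2)$ and the fact that quotienting $\hat 1+I$ by an ideal containing $I$ yields only $\bbC$ --- force a surjection of a norm-closed two-sided ideal of $\cN$ onto the type $I_2$ factor $\mc B(\bbC^2)$, contradicting the absence of a type $I_2$ summand in $\cN$. (Equivalently: $\mc B(\bbC^2)$ admits non-linear quasi-linear functionals, so by the very statement of the Generalised Gleason Theorem it cannot be a quotient of an ideal in a type-$I_2$-free von Neumann algebra.) Hence Thm.~\ref{Thm_HamhalterVersion} applies, and the order-isomorphism $\ga:\CA\ra\CB$ corresponds to a unique unital quasi-Jordan isomorphism $Q:\cA_{\sa}\ra\cB_{\sa}$ inducing $\ga$. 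Any Jordan isomorphism $T:\cA_{\sa}\ra\cB_{\sa}$ inducing $\ga$ is in particular a quasi-Jordan isomorphism inducing $\ga$, so $T$ must equal $Q$; this already establishes the uniqueness assertion, and it remains only to prove that $Q$ is globally linear, hence a genuine Jordan isomorphism.

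To upgrade $Q$ from quasi-linear to linear I would invoke Bunce--Wright. For each abelian $C\in\CA$ the restriction $Q|_{C_{\sa}}$ is, by definition of a quasi-Jordan homomorphism, a unital Jordan homomorphism into $\cB_{\sa}$; it therefore preserves squares, hence positivity (positive self-adjoint elements are squares inside $C_{\sa}$), hence is contractive; since every self-adjoint element of $\cA$ lies in some such $C$, the map $Q$ is a contraction, in particular bounded on the unit ball of $\cA_{\sa}$, and it is quasi-linear by construction. By hypothesis $\cA$ is an at-least-three-dimensional quotient of $\hat 1+I$ with $I$ a norm-closed two-sided ideal in a von Neumann algebra $\cN$ without type $I_2$ summand --- precisely the situation in which the Generalised Gleason Theorem asserts that a quasi-linear map from $\cA_{\sa}$ into a Banach space that is bounded on the unit ball must be linear. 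Applying this with target Banach space $\cB$ shows that $Q:\cA_{\sa}\ra\cB_{\sa}$ is linear. Now Kadison's identity $\hA\cdot\hB=\tfrac12\bigl((\hA+\hB)^2-\hA^2-\hB^2\bigr)$, combined with linearity of $Q$ and the fact that $Q(\hA^2)=Q(\hA)^2$ for every self-adjoint $\hA$ (since $\hA^2$ lies in the abelian algebra generated by $\hA$), yields $Q(\hA\cdot\hB)=Q(\hA)\cdot Q(\hB)$ for \emph{all} self-adjoint $\hA,\hB$, so $T:=Q$ is a unital linear Jordan homomorphism. As $Q$ is bijective, $Q^{-1}$ is linear, and the standard transport-of-structure computation $Q^{-1}(a\cdot b)=Q^{-1}\bigl(Q(Q^{-1}a)\cdot Q(Q^{-1}b)\bigr)=Q^{-1}\bigl(Q(Q^{-1}a\cdot Q^{-1}b)\bigr)=Q^{-1}a\cdot Q^{-1}b$ shows $Q^{-1}$ is a Jordan homomorphism too. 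Thus $T$ is a Jordan isomorphism, it induces $\ga$ by construction, and its uniqueness was noted above.

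The main obstacle --- everything else being essentially bookkeeping --- is matching the hypotheses of Hamhalter's reconstruction theorem with those of the Generalised Gleason Theorem: one has to be sure that the quasi-linear map $Q$ produced by Thm.~\ref{Thm_HamhalterVersion} is defined on the self-adjoint part of a $C^*$-algebra of exactly the form to which Bunce--Wright applies (a nontrivial, at least three-dimensional quotient of a norm-closed two-sided ideal in a type-$I_2$-free von Neumann algebra), and that three-dimensionality is what prevents the Gleason-type conclusion from being vacuous. Once both theorems are seen to apply to the same algebra, the passage quasi-linear $\to$ linear $\to$ Jordan, and the descent to $Q^{-1}$, are immediate.
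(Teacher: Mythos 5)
Your proposal is correct and takes essentially the same route as the paper, which states this result as Hamhalter's Cor.~3.6 and whose surrounding discussion derives it from exactly the ingredients you use: the reconstruction theorem (Thm.~\ref{Thm_HamhalterVersion}) producing the unique quasi-Jordan isomorphism inducing $\ga$, boundedness of that map plus the Bunce--Wright Generalised Gleason Theorem to obtain linearity, and Kadison's identity $\hA\cdot\hB=\tfrac12((\hA+\hB)^2-\hA^2-\hB^2)$ to upgrade to a Jordan isomorphism. Your parenthetical argument that $\mc B(\bbC^2)$ cannot occur under the stated hypotheses (since it admits bounded non-linear quasi-linear functionals) is a sound way to check that the exceptional cases of Thm.~\ref{Thm_HamhalterVersion} are excluded, and is more reliable than your first sketch of that exclusion.
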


We see that for the class of $C^*$-algebras described in Cor. \ref{Cor_HamCStarJordan} (which clearly contains all von Neumann algebras not isomorphic to $\bbC^2$ and with no type $I_2$ summand), the group $\Aut_{quJord}(\cA_{\sa})$ of unital quasi-Jordan automorphisms in fact is the group $\Aut_{Jordan}(\cA_{\sa})$. Each Jordan automorphism $T\in\Aut_{Jordan}(\cA_{\sa})$ can be extended by linearity to a Jordan $*$-automorphism of $\cA$. Conversely, each Jordan $*$-automorphism of $\cA$ restricts to a Jordan automorphism of $\cA_{\sa}$, and the two maps are inverse to each other. So, the groups $\Aut_{Jordan}(\cA_{\sa})$ and $\Aut_{Jordan}(\cA)$ are isomorphic.

Hence, for the particular class of unital $C^*$-algebras described in Cor. \ref{Cor_HamCStarJordan}, the spectral presheaf determines the Jordan structure completely, and we obtain

\begin{theorem}			\label{Thm_SpecPresheafOftenDeterminesUnitalCStarAlg}
Let $\cN$ be a von Neumann algebra with no type $I_2$ summand, and let $\cA$ be a $C^*$-algebra that is an at least three-dimensional quotient of an ideal algebra $\hat 1+I$, where $I$ is a norm-closed two-sided ideal in $\cN$. Let $\cB$ be a $C^*$-algebra. Every isomorphism $\SigB\ra\SigA$ of the spectral presheaves induces a unique Jordan $*$-isomorphism $T:\cA\ra\cB$ (in the opposite direction). Conversely, every Jordan $*$-isomorphism $T:\cA\ra\cB$ induces a unique isomorphism $\pair{\Ga}{\io}:\SigB\ra\SigA$ of the spectral presheaves (in the opposite direction). Moreover, every quasi-Jordan $*$-isomorphism $T:\cA\ra\cB$ gives an isomorphism $\pair{\tilde\Ga}{\kappa}:\fA\ra\fB$ in the category $\Cpr{\ucC}$ of unital commutative $C^*$-algebra-valued presheaves from the Bohrification of $\cA$ to the Bohrification of $\cB$ and vice versa, which induces an order-isomorphism $\ga:\CA\ra\CB$ of the context categories and vice versa. Here, $\ga$ is the base map underlying the essential geometric morphism $\pair{\Ga}{\io}:\SigB\ra\SigA$ corresponding to the quasi-Jordan $*$-isomorphism $T:\cA\ra\cB$.
\end{theorem}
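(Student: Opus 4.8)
The plan is to assemble the statement from the bijective correspondences already proved in this subsection, the single genuinely new ingredient being that, for $\cA$ in the class of Cor.~\ref{Cor_HamCStarJordan}, the \emph{quasi}-Jordan isomorphism extracted from an isomorphism of spectral presheaves is automatically linear, hence an honest Jordan isomorphism. Concretely, for $\cA,\cB$ as in the statement I would set up the chain of bijections
\begin{align*}
\{\text{isomorphisms }\SigB\ra\SigA\}\ &\leftrightarrow\ \{T:\cA_{part}\ra\cB_{part}\}\\
&\leftrightarrow\ \{Q:\cA_{\sa}\ra\cB_{\sa}\ \text{quasi-Jordan}\}\ \leftrightarrow\ \{\ga:\CA\ra\CB\ \text{order-iso}\},
\end{align*}
and then identify the two middle sets with the set of \emph{linear} Jordan isomorphisms $\cA_{\sa}\ra\cB_{\sa}$, which complexifies bijectively onto the set of Jordan $*$-isomorphisms $\cA\ra\cB$.

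First I would use Prop.~\ref{Prop_IsomsOfSpecPreshsGiveIsomsAsPartialAlgs} to move between an isomorphism $\pair{\Ga}{\io}:\SigB\ra\SigA$ and an isomorphism $T:\cA_{part}\ra\cB_{part}$ of partial unital $C^*$-algebras, keeping track (from the proofs of Lemmas~\ref{Lem_NatIsoGivesPartialAutom} and \ref{Lem_PartialMorGivesNatIso}) that the base map underlying $\Ga$ is exactly the order-isomorphism $\ga:\CA\ra\CB$, $C\mt T|_C(C)$, and (via Cor.~\ref{Cor_AutSigAContravIsomToAutolcA} and the identification of $\fA$ with $\cA_{part}$) that $T$ corresponds to the copresheaf isomorphism $\pair{\tilde\Ga}{\kappa}:\fA\ra\fB$. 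Restricting $T$ to self-adjoint elements gives a unital quasi-Jordan isomorphism $Q:=T|_{\cA_{\sa}}:\cA_{\sa}\ra\cB_{\sa}$ inducing the same $\ga$; conversely, by Thm.~\ref{Thm_HamhalterVersion}---applicable because the hypotheses on $\cA$ exclude $\bbC^2$ and $\mc B(\bbC^2)$, and the consequent order-isomorphism $\CA\cong\CB$ excludes them for $\cB$ as well---such a $Q$ extends uniquely (by linearity on commuting elements) to $T$ and corresponds to a unique $\ga$. This already gives the ``moreover'' clause, i.e. the mutually compatible correspondence between $\pair{\Ga}{\io}$, $T:\cA_{part}\ra\cB_{part}$, $\pair{\tilde\Ga}{\kappa}:\fA\ra\fB$, and $\ga$ (recalling that in the present class ``quasi-Jordan'' and ``Jordan'' coincide).

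The new step is the linearity upgrade. Since $\cA$ is an at least three-dimensional quotient of an ideal algebra $\hat 1+I$ in a von Neumann algebra with no type $I_2$ summand, Cor.~\ref{Cor_HamCStarJordan} applies to the order-isomorphism $\ga:\CA\ra\CB$ obtained above and yields a \emph{unique} \emph{linear} Jordan isomorphism $\tilde Q:\cA_{\sa}\ra\cB_{\sa}$ inducing $\ga$; the promotion from quasi-linearity to linearity is exactly the content of the Generalised Gleason Theorem of Bunce and Wright. As $\tilde Q$ is in particular a quasi-Jordan isomorphism inducing $\ga$, the bijection of Thm.~\ref{Thm_HamhalterVersion} forces $\tilde Q=Q$, so $Q$ is linear. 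Extending $Q$ complex-linearly via $T(\hA_1+i\hA_2):=Q(\hA_1)+iQ(\hA_2)$ for $\hA_1,\hA_2\in\cA_{\sa}$ produces a Jordan $*$-isomorphism $T:\cA\ra\cB$, with restriction to self-adjoint parts as inverse operation; so Jordan $*$-isomorphisms $\cA\ra\cB$ are in bijection with linear Jordan isomorphisms $\cA_{\sa}\ra\cB_{\sa}$. Composing this with the chain above---each link a bijection compatible with composition of isomorphisms---gives the asserted bijective correspondence between isomorphisms $\SigB\ra\SigA$ and Jordan $*$-isomorphisms $T:\cA\ra\cB$ in the opposite direction, with $\ga$ realised as the base map of the corresponding $\pair{\Ga}{\io}$; uniqueness in each direction follows from bijectivity.

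The main obstacle is precisely this linearity upgrade: the map $Q$ read off from the spectral presheaf is a priori only quasi-linear, and genuinely nonlinear quasi-Jordan maps do occur on small algebras---which is why $\bbC^2$ and $\mc B(\bbC^2)$ must be excluded and why a substantive structural hypothesis on $\cA$ cannot be avoided---so everything hinges on being able to invoke Cor.~\ref{Cor_HamCStarJordan} (and, behind it, the Bunce--Wright theorem). Once that is granted, the remaining work is the bookkeeping of checking that $\ga$, $\pair{\tilde\Ga}{\kappa}$ and $T$ all match up under the several correspondences, together with the minor observation that an isomorphism $\SigB\ra\SigA$ can only exist when $\cB$ is itself non-exceptional, which follows from $\cA_{part}\cong\cB_{part}$ (hence $\CA\cong\CB$) since the hypotheses on $\cA$ prevent $\CA$, and therefore $\CB$, from being the context category of $\bbC^2$ or of $\mc B(\bbC^2)$.
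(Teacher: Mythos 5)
Your proposal is correct and follows essentially the same route as the paper, which states Thm.~\ref{Thm_SpecPresheafOftenDeterminesUnitalCStarAlg} as a direct assembly of Prop.~\ref{Prop_IsomsOfSpecPreshsGiveIsomsAsPartialAlgs}, Thm.~\ref{Thm_HamhalterVersion} and the linearity upgrade provided by Cor.~\ref{Cor_HamCStarJordan} (Bunce--Wright), followed by complexification from $\Aut_{Jordan}(\cA_{\sa})$ to Jordan $*$-isomorphisms. Your added care about why the hypotheses exclude $\bbC^2$ and $\mc B(\bbC^2)$, and why $\cB$ is then non-exceptional as well, is a small diligence point the paper leaves implicit, but it does not change the argument.
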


\begin{corollary}			\label{Cor_GroupIsosCStar2}
Let $\cN$ be a von Neumann algebra with no type $I_2$ summand, and let $\cA$ be a $C^*$-algebra that is an at least three-dimensional quotient of an ideal algebra $\hat 1+I$, where $I$ is a norm-closed two-sided ideal in $\cN$. The groups $\Aut_{ord}(\CA)$, $\Aut_{Jordan}(\cA_{\sa})$, $\Aut_{part}(\cA_{part})$, $\Aut_{Jordan}(\cA)$ and $\Aut{\fA}$ are isomorphic, and these groups are contravariantly isomorphic to the group $\Aut(\SigA)$ of automorphisms of the spectral presheaf $\SigA$ of $\cA$.
\end{corollary}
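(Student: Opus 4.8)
The plan is to treat this as a bookkeeping corollary: I would assemble a chain of group isomorphisms out of the bijective correspondences established above, checking at each link that it is compatible with composition. Concretely, I would establish
\[
			\Aut_{ord}(\CA) \cong \Aut_{Jordan}(\cA_{\sa}) \cong \Aut_{Jordan}(\cA) \cong \Aut_{part}(\cA_{part}) \cong \Aut(\fA),
\]
with all of these covariant group isomorphisms, together with the contravariant group isomorphism $\Aut(\SigA)\cong\Aut_{part}(\cA_{part})$ from Cor.~\ref{Cor_ContravGroupIso}; combining the latter with the chain yields all asserted (contravariant) isomorphisms, and this sharpens Cor.~\ref{Cor_GroupIsosCStar} by replacing ``quasi-Jordan'' with ``Jordan''. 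First I would record that under the hypotheses $\cA\not\cong\bbC^2$ (it is at least three-dimensional) and that Cor.~\ref{Cor_HamCStarJordan}, Thm.~\ref{Thm_HamhalterVersion} and Cor.~\ref{Cor_HamhalterVersion} apply to the pair $(\cA,\cA)$ together with the induced base automorphism.

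Next I would treat the links. The isomorphism $\Aut_{ord}(\CA)\cong\Aut_{Jordan}(\cA_{\sa})$ is Cor.~\ref{Cor_HamCStarJordan} with $\cB=\cA$: each order-automorphism $\ga:\CA\ra\CA$ has a unique Jordan automorphism of $\cA_{\sa}$ inducing it, and conversely; this is where the hypothesis on $\cA$ really enters, via the generalised Gleason theorem of Bunce and Wright, which forces a quasi-linear quasi-Jordan automorphism of $\cA_{\sa}$ to be linear, hence Jordan since $\hA\cdot\hB=\tfrac12((\hA+\hB)^2-\hA^2-\hB^2)$. The isomorphism $\Aut_{Jordan}(\cA_{\sa})\cong\Aut_{Jordan}(\cA)$ is complexification: a real-linear Jordan automorphism of $\cA_{\sa}$ extends uniquely to a complex-linear Jordan $*$-automorphism of $\cA$ via $\hA+i\hB\mapsto T(\hA)+iT(\hB)$, and restriction to $\cA_{\sa}$ is the inverse. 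For $\Aut_{Jordan}(\cA_{\sa})\cong\Aut_{part}(\cA_{part})$ I would note $\Aut_{Jordan}(\cA_{\sa})=\Aut_{quJord}(\cA_{\sa})$ under the hypotheses (Bunce--Wright again) and invoke the group isomorphism $\Aut_{quJord}(\cA_{\sa})\cong\Aut_{part}(\cA_{part})$ of Cor.~\ref{Cor_HamhalterVersion}, with restriction to the self-adjoint part in one direction and extension by linearity on commuting operators (and by real/imaginary parts) in the other. Finally, $\Aut_{part}(\cA_{part})\cong\Aut(\fA)$ is the reading of Lemmas~\ref{Lem_NatIsoGivesPartialAutom} and~\ref{Lem_PartialMorGivesNatIso} through Rem.~\ref{Rem_fAIsPartialAlg}: $\fA$ is the topos-internal presentation of $\cA_{part}$, so automorphisms of the copresheaf $\fA$ are the same as partial $*$-automorphisms of $\cA_{part}$, while $\Aut(\SigA)\cong\Aut(\fA)$ contravariantly is Cor.~\ref{Cor_AutSigAContravIsomToAutolcA}.

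For variance I would recall that $\ga\mapsto T$ satisfies $\ga_i(C)=T_i(C)$, so $T_2\circ T_1$ induces $\ga_2\circ\ga_1$ and the displayed chain is covariant, whereas the text following Prop.~\ref{Prop_IsomsOfSpecPreshsGiveIsomsAsPartialAlgs} records that the partial $*$-automorphism corresponding to $\pair{\Ga_1}{\io_1}\circ\pair{\Ga_2}{\io_2}$ is $T_2\circ T_1$, so $\Aut(\SigA)\to\Aut_{part}(\cA_{part})$ is an anti-isomorphism; combined with Thm.~\ref{Thm_SpecPresheafOftenDeterminesUnitalCStarAlg} this gives the statement. The part I expect to be the main obstacle --- everything else being routine --- is verifying that $\Aut_{quJord}(\cA_{\sa})$ genuinely collapses to $\Aut_{Jordan}(\cA_{\sa})$ for the algebras in question, i.e.\ that the Bunce--Wright theorem applies verbatim under ``$\cA$ an at least three-dimensional quotient of an ideal algebra $\hat 1+I$ with $I$ a norm-closed two-sided ideal in a von Neumann algebra $\cN$ with no type $I_2$ summand'', which is exactly the scope of Cor.~\ref{Cor_HamCStarJordan} and must be checked to cover the pair $(\cA,\cA)$. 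A secondary point is confirming that the ``extension by linearity on commuting operators'' and ``restriction to $\cA_{\sa}$'' maps are mutually inverse group homomorphisms, but this follows the computations already done in the proof of Prop.~\ref{Prop_PartVNIsomsAreProjLattIsoms} and in the discussion preceding Thm.~\ref{Thm_SpecPresheafOftenDeterminesUnitalCStarAlg}.
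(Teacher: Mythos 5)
Your proposal is correct and follows essentially the same route as the paper: the corollary is obtained by combining Cor.~\ref{Cor_HamCStarJordan} (via Bunce--Wright, collapsing $\Aut_{quJord}(\cA_{\sa})$ to $\Aut_{Jordan}(\cA_{\sa})$), the extension/restriction isomorphism $\Aut_{Jordan}(\cA_{\sa})\cong\Aut_{Jordan}(\cA)$, and the already established isomorphisms of Cor.~\ref{Cor_GroupIsosCStar}, Cor.~\ref{Cor_ContravGroupIso} and Cor.~\ref{Cor_AutSigAContravIsomToAutolcA}, with the same contravariance bookkeeping. No substantive differences from the paper's argument.
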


\section{Outlook}
If we regard the spectral presheaf $\SigN$ as a generalised Gelfand spectrum of the von Neumann algebra $\cN$, we see from Thm. \ref{Thm_VNAs} that the spectral presheaf $\SigN$ of a von Neumann algebra contains enough information to determine $\cN$ as a $JBW$-algebra. What is missing for a full reconstruction of $\cN$ from its `spectrum' $\SigN$ is the Lie algebra structure on the self-adjoint elements of $\cN$: the Jordan product $(\hA,\hB)\mt\hA\cdot\cB=\frac{1}{2}(\hA\hB+\hB\hA)$ is the symmetrisation of the noncommutative product $(\hA,\hB)\mt\hA\hB$ (the factor $\frac{1}{2}$ in the Jordan product is just a matter of convention), and the Lie product $(\hA,\hB)\mt[\hA,\hB]=\hA\hB-\hB\hA$ is its anti-symmetrisation, so from knowledge of both, one can reconstruct the noncommutative product and hence the von Neumann algebra $\cN$. 

In \cite{Doe13x}, we will show how the Lie algebra structure is also encoded geometrically in the spectral presheaf. This will involve the theory of orientations on operator algebras and their state spaces pioneered by Connes \cite{Con74} and developed further by Alfsen, Hanche-Olsen, Iochum and Shultz, see \cite{AlfShu98,AlfShu01,AlfShu03} and references therein. Combining this with results on flows on the spectral presheaf \cite{Doe12c}, one can characterise orientations (more) geometrically and hence can reconstruct the Lie algebra structure on the self-adjoint elements $\cN_{\sa}$ of a von Neumann algebra, and by linear extension on all of $\cN$.

Thm. \ref{Thm_SpecPresheafOftenDeterminesUnitalCStarAlg} shows that for a certain class of unital $C^*$-algebras, the spectral presheaf also determines the Jordan $*$-structure. We will discuss flows on spectral presheaves of such algebras and results on Lie algebra structure in \cite{Doe13x} as well.


\textbf{Acknowledgements.} Discussions with Chris Isham, Boris Zilber and Yuri Manin are gratefully acknowledged. Rui Soares Barbosa provided key input, and Ralf Meyer, Tom Woodhouse, Nadish de Silva, Dan Marsden and Carmen Constantin gave valuable feedback for which I thank them. I also thank Bertfried Fauser, who carefully read a draft and made a number of very helpful remarks. Jan Hamhalter provided inspiration and help with a technical point, for which I am grateful. I also thank John Harding, Izumi Ojima, John Maitland Wright and Dirk Pattinson for their interest in this work (and for their patience).

\end{document}